


\documentclass[12pt]{article}

\usepackage{amsmath,amssymb,amsthm}
\usepackage[usenames,dvipsnames,table]{xcolor}
\usepackage{tikz,bbm,diagbox}
\usepackage[colorlinks=true,citecolor=black,linkcolor=black,urlcolor=blue]{hyperref}

\usetikzlibrary{decorations.markings}
\usetikzlibrary{calc}
\usepackage[inline]{enumitem}
\usepackage{float}

\allowdisplaybreaks

\renewcommand*{\Pr}{\mathop{\mathrm{Pr}}}
\newcommand*{\E}{\mathrm{\mathbf{E}}}

\newtheorem{theorem}{Theorem}[section]
\newtheorem{lemma}[theorem]{Lemma}

\newtheorem{conjecture}[theorem]{Conjecture}

\def\nfrac#1#2{{\textstyle\frac{#1}{#2}}}
\def\dfrac#1#2{\lower0.15ex\hbox{\large$\textstyle\frac{#1}{#2}$}}

\newcommand{\xvec}{\boldsymbol{x}}
\newcommand{\bvec}{\boldsymbol{b}}
\newcommand{\yvec}{\boldsymbol{y}}
\newcommand{\avec}{\boldsymbol{a}}
\newcommand{\Bvec}{\boldsymbol{B}}
\newcommand{\evec}{\boldsymbol{e}}
\newcommand{\uvec}{\boldsymbol{u}}
\newcommand{\vvec}{\boldsymbol{v}}
\newcommand{\wvec}{\boldsymbol{w}}
\newcommand{\x}{\tau}   

\def\kind#1{{k^+(#1)}}  
\def\kSSCM#1{{k^-(#1)}} 
\def\kactual#1{{k^*(#1)}}  
\def\kSSCM#1{{k^-(#1)}} 

\setlength{\abovedisplayskip}{0.5\baselineskip}

\addtolength{\topmargin}{-3\baselineskip}
\addtolength{\textheight}{6\baselineskip}
\addtolength{\textwidth}{2cm}
\addtolength{\oddsidemargin}{-1cm}
\addtolength{\evensidemargin}{-1cm}

\title{Decomposing random regular graphs into stars}

\author{
Michelle Delcourt 
\thanks{Department of Mathematics, Toronto Metropolitan University, Toronto, Ontario M5B 2K3, Canada. {\tt mdelcourt@torontomu.ca}\ . 
}
\and 
Catherine Greenhill\thanks{School of Mathematics and Statistics, UNSW Sydney, NSW 2052, Australia. \texttt{c.greenhill@unsw.edu.au}\ .}
\and 
Mikhail Isaev\thanks{School of Mathematics and Statistics, UNSW Sydney, NSW 2052, Australia. \texttt{m.isaev@unsw.edu.au}\ . 
}
\and 
Bernard Lidick{\' y}\thanks{Department of Mathematics, Iowa State University, Ames, IA, USA. \texttt{lidicky@iastate.edu}\ .  
}
\and 
Luke Postle
\thanks{Combinatorics and Optimization Department, University of Waterloo, Waterloo, 
Ontario N2L 3G1, Canada. \texttt{lpostle@uwaterloo.ca}\ . 
}
}  

\date{29 June 2025}

\begin{document}

\maketitle

\begin{abstract}
We study $k$-star decompositions, that is,
partitions of the edge set into disjoint stars with $k$ edges, 
in the uniformly random $d$-regular graph model $\mathcal{G}_{n,d}$.
Using the small subgraph conditioning method, we prove an existence result for such decompositions for all $d,k$ such that 
$d/2 < k \leq d/2 + \max\{1,\nfrac{1}{6}\log d\}$.
More generally, we give a sufficient existence condition 
that can be checked
numerically for any given values of $d$ and $k$. Complementary negative
results are obtained using the independence ratio of random regular graphs.
Our results establish an existence threshold for $k$-star decompositions in $\mathcal{G}_{n,d}$ for all  $d\leq 100$ and $k > d/2$. 

For smaller values of $k$, the connection between $k$-star decompositions
and $\beta$-orientations allows us to apply results of 
Thomassen (2012) and Lov{\' a}sz, Thomassen, Wu and Zhang (2013). 
We prove that random $d$-regular graphs satisfy their assumptions
with high probability, thus establishing a.a.s.\ existence of $k$-star decompositions
(i) when $2k^2+k\leq d$, and (ii) when $k$ is odd and $k < d/2$.
\end{abstract}

\section{Introduction}\label{s:introduction}

A $k$-\emph{star} is a complete bipartite graph $K_{1,k}$, and a $k$-{star
decomposition} of a given graph $G$ is a partition of the edge set of $G$
into disjoint $k$-stars.
The problem of decomposing graphs into disjoint stars has been
well-studied in the design community: see, for 
example,~\cite{CH,hoffman,tarsi,YISUH} and references therein. 

We consider the problem of existence of $k$-star decompositions in
the uniformly random $d$-regular graph $\mathcal{G}_{n,d}$, where $k$, $d$
are constants and $n\to\infty$. 
The case $(d,k)=(4,3)$ was studied by the first and last author in~\cite{DP}.
A necessary condition for existence is that $k$ divides $dn/2$,  
since there are exactly $dn/(2k)$ disjoint $k$-stars in any such decomposition.
Let $\mathcal{N}_{d,k}$ be defined by
\[ \mathcal{N}_{d,k}:= \{ n\in\mathbb{Z}^+ \mid 2k \text{ divides } dn\}.\]
All our asymptotic statements are with respect to $n\to \infty$
along $\mathcal{N}_{d,k}$.
If an event holds with probability which tends
to~1, then we say that the event holds \emph{asymptotically almost
surely}  abbreviated as a.a.s. 

\subsection{Connection to the independence ratio}\label{s:independence-ratio}

We find the case $k>d/2$ the most interesting due to its connection with
the independence ratio, which is an important quantity in
statistical physics and combinatorial optimisation.
Recall that the \emph{independence ratio} of a graph is the size of its largest independent set divided by the number of vertices. 
To describe the connection to our problem, observe that every vertex is the centre of at most one star when $k>d/2$.
Given a $k$-star decomposition, we say that a given vertex is a \emph{centre} if it is the centre of a star, and otherwise it is called a \emph{leaf}.
There are $\dfrac{dn}{2k}$ centres and $\dfrac{(2k-d)n}{2k}$ leaves in any decomposition.
Observing that the leaves form an independent set, we establish another necessary condition:
\begin{equation}\label{connection-leaves}
	\begin{aligned}
	&\text{if a $d$-regular graph $G$ admits a $k$-star decomposition  
 }  \\[-0.4ex] &\text{then $G$ contains an independent set of size $\dfrac{(2k-d)n}{2k}$.}
	\end{aligned}
\end{equation}

Bayati, Gamarnik and Tetali~\cite{BGT} proved that the
independence ratio of $\mathcal{G}_{n,d}$ converges to a constant $\alpha^*(d)$ when $d\geq 3$.  
For a history of work on the independence ratio of $\mathcal{G}_{n,d}$, see for example~\cite{DSS,Wormald-survey}. 
It follows from (\ref{connection-leaves}) that
\begin{equation}
\label{eq:indset-nonexistence}
\text{ if $k > \frac{d}{2(1-\alpha^*(d))}$ then $\mathcal{G}_{n,d}$ has no $k$-star 
               decomposition a.a.s.}
\end{equation}
Furthermore, replacing $\alpha^*(d)$ by any upper bound in (\ref{eq:indset-nonexistence})
gives a non-existence result.
Frieze and {\L}uczak~\cite{FL} proved that $\alpha^*(d)$ is 
bounded above by $\dfrac{2\log d}{d}$.
When $d\geq 20$ it is believed that 
$\alpha^*(d)$ is precisely the value $\alpha^{\text{1-RSB}}(d)$ predicted by the one-step replica
symmetry breaking (1-RSB) heuristic of statistical physics (see~\cite{BKZZ, harangi}).
Ding, Sly and Sun~\cite{DSS} gave a rigorous proof of this
when $d\geq d_0$ is sufficiently large.
Lelarge and Oulamara~\cite{LO} used the interpolation method to prove that 
the 1-RSB prediction always gives an upper bound on $\alpha^*(d)$, and
provided explicit values of $\alpha^{\text{1-RSB}}(d)$ for $d=3,\ldots, 10$.
Harangi~\cite{harangi} used $r$-step replica symmetry breaking formulas 
to obtain
improved upper bounds for the independence ratio for $d=3,\ldots, 19$.

For small values of $d$, there is a constant gap between $\alpha^{\text{1-RSB}}(d)$ and the 
best-known lower bounds on $\alpha^*(d)$.
Furthermore, as remarked in~\cite{DSS}, the 1-RSB prediction is believed
to fail on low-degree graphs. 
Using the differential equations method,
lower bounds on $\alpha^*(d)$ have been obtained by Wormald~\cite{wormald95},
with some improvements by Duckworth and Zito~\cite{DZ}. 
Experimental (non-rigorous) lower bounds can also be found in~\cite{MK}.
Taking the contrapositive of (\ref{connection-leaves}), we observe that
\begin{equation}
\label{eq:indset-existence}
\text{if \, $\mathcal{G}_{n,d}$\, a.a.s.\ has a $k$-star decomposition then}\,\, \alpha^*(d)\geq 1 - \frac{d}{2k}.
\end{equation}
For example, in Section~\ref{ss:small-values} we show that $\mathcal{G}_{n,16}$ a.a.s.\ has a 10-star-decomposition. Then using (\ref{eq:indset-existence}), 
we obtain a new lower bound of $1/5$ on the independence
ratio of $\mathcal{G}_{n,16}$. 
This improves on the best-known explicit lower bound of 0.1985 
given by Wormald~\cite[Table~1]{wormald95}, obtained using the
differential equations method. 

\subsection{Threshold conjecture and supporting results}\label{s:main-results}

For $d\geq 3$, let $\kind{d}$ be the largest positive integer 
$d/2 < k < d$ such that
\begin{equation}\label{eq:k+}
 d^{d-1}
	 > (2k-d)^{(2k-d)/d}\,
	 2^{(d^2-2k)/d}\, k^{k(d-2)/d}\, (d-k)^{d-k}.
 \end{equation}
Note that $\kind{d}$ is well defined as (\ref{eq:k+}) holds when $k=\lceil d/2\rceil$. 
Furthermore, substituting $k=d/2 + c\log d$ into (\ref{eq:k+}) and solving
for $c$, we find that $c=1 + O\left(\frac{\log\log d}{d}\right)$. From this
	we can conclude that 
\begin{equation}
\label{eq:k+asymptotics}
\kind{d} = d/2 + \log d + O\left(\log\log d\right),
\end{equation}
using a concavity argument to exclude larger values of $k$; see (\ref{eq:monotonicity}) for more details.
The implicit constant in the above $O(\cdot)$ term is independent of $d$.
As we will see in Section~\ref{sec:ind-negative}, the definition of $\kind{d}$ corresponds to the
expectation threshold for the a.a.s.\ existence of independent sets of size $(2k-d)n/(2k)$ in
$\mathcal{G}_{n,d}$.

\medskip

If $\mathcal{G}_{n,d}$ a.a.s.\ has a $(k+1)$-star decomposition, it does not
obviously follow that $\mathcal{G}_{n,d}$ a.a.s.\ has a $k$-star decomposition,
even if we assume all necessary divisibility conditions.
Despite this,
we believe that there is a threshold value $\kactual{d}$ which is close to
$\kind{d}$.

\begin{conjecture}
\label{our-conjecture}
Let $d>k\geq 2$ be fixed positive integers. There exists $\kactual{d}$ 
such that
\[ \Pr\big(\mathcal{G}_{n,d}  \text{ has a $k$-star decomposition}\big)\rightarrow \begin{cases} 1 & \text{ if $k \leq \kactual{d}$,}\\ 
0 & \text{ if $k > \kactual{d}$.} \end{cases}
\]
Furthermore, the threshold value satisfies $\kactual{d}\in\{ \kind{d}-1,\, \kind{d}\}$.
\end{conjecture}

All our results support this conjecture.

\medskip

First we consider small values of $k$, that is, $2\leq k\leq d/2$.
It was asserted in~\cite{DP} that the work of Lov{\' a}sz, Thomassen, Wu and Zhang~\cite[Theorem~3.1]{LTWZ} implies the following: if $k \leq \lceil d/2\rceil$, then a.a.s. $\mathcal{G}_{n,d}$ admits a $k$-star decomposition.  In this paper, we would like to clarify that the situation is not as straightforward, since~\cite[Theorem~3.1]{LTWZ} assumes that $k$ is odd and we require a stronger assumption than $d$-edge-connectivity.  Thus we must perform a more careful application of~\cite[Theorem~3.1]{LTWZ}, together with some known properties of random regular graphs, to prove the following in Section~\ref{s:small-k}.

\begin{theorem}
\label{thm:small-k}
Let $k,d$ be positive integers with $k < d$. 
Then a.a.s.\ $\mathcal{G}_{n,d}$ has a $k$-star decomposition in the following
cases:
\begin{itemize}
\item[\emph{(i)}] $k=2$ or $d$ is a multiple of $2k$. 
\item[\emph{(ii)}] $d\geq 2k^2+k$.
\item[\emph{(iii)}] $k$ is odd and $3\leq k < d/2$.
\end{itemize}
\end{theorem}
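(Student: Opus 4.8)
The plan is to reduce the existence of a $k$-star decomposition to a known orientation/flow condition, and then verify that condition a.a.s.\ for $\mathcal{G}_{n,d}$. The standard bridge is the following: a graph $G$ has a $k$-star decomposition if and only if $G$ admits an orientation in which every out-degree is a multiple of $k$; given such an orientation, at each vertex $v$ group the $d_{\mathrm{out}}(v)$ out-edges into $d_{\mathrm{out}}(v)/k$ stars centered at $v$, and conversely a $k$-star decomposition induces such an orientation by orienting each star's edges away from its center. So the goal becomes: show that a.a.s.\ $\mathcal{G}_{n,d}$ has an orientation with all out-degrees $\equiv 0 \pmod k$. This is precisely a \emph{$\mathbb{Z}_k$-flow}/\emph{mod-$k$ orientation} question, which is the territory of Thomassen~(2012) and Lov\'asz--Thomassen--Wu--Zhang~\cite{LTWZ}. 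For part~(i), the case $d$ an even multiple of $k$ is essentially trivial: $\mathcal{G}_{n,d}$ a.a.s.\ has a Hamilton cycle, or more simply decomposes into $d/2$ edge-disjoint $2$-factors (for $d$ even one can take a perfect matching plus cycles via the configuration model / results of Molloy--Robalewska--Wormald--Wysocka), and orienting each $2$-factor consistently gives every vertex out-degree exactly $d/2$, a multiple of $k$; the $k=2$ case needs an Eulerian-type argument since $d$ may be odd, handled below alongside (iii).

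For parts~(ii) and~(iii) the plan is to invoke the appropriate theorem from~\cite{LTWZ} (and Thomassen~2012) after checking its hypotheses hold a.a.s. These results guarantee mod-$k$ orientations under a hypothesis stronger than $d$-edge-connectivity; the relevant sufficient condition is high edge-connectivity together with (for the odd-$k$ case) the parity/degree conditions built into~\cite[Theorem~3.1]{LTWZ}. Concretely, for (iii) with $k$ odd, \cite[Theorem~3.1]{LTWZ} gives a mod-$k$ orientation provided the graph is $(3k-3)$-edge-connected (or the precise bound stated there) and satisfies the necessary degree-sum congruence; since $d \geq 2k+1 > k$ the per-vertex feasibility $k \mid d_{\mathrm{out}}(v)$ with $0 \le d_{\mathrm{out}}(v)\le d$ is satisfiable, and the global sum $\sum_v d_{\mathrm{out}}(v) = dn/2$ is divisible by $k$ exactly because $n \in \mathcal{N}_{d,k}$. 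For (ii) with $d \ge 2k^2+k$, the graph is so highly connected relative to $k$ that Thomassen's bound (mod-$k$ orientations exist in $(3k-3)$- or even $(2k-1)$-edge-connected graphs with the divisibility condition, as quantified in~\cite{LTWZ}) applies with room to spare, and the parity restriction on $k$ is not needed because the edge-connectivity vastly exceeds the threshold where the modular-flow argument becomes unconditional. So the substantive task is to show $\mathcal{G}_{n,d}$ is a.a.s.\ sufficiently edge-connected.

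The edge-connectivity input is classical: for fixed $d \ge 3$, $\mathcal{G}_{n,d}$ is a.a.s.\ $d$-edge-connected (indeed $d$-connected), by Bollob\'as and Wormald; more is true --- a.a.s.\ every edge cut of size less than $d$ is trivial, and small vertex/edge cuts do not exist. When the theorem from~\cite{LTWZ} needs edge-connectivity $c(k)$ for some function of $k$ that may exceed $d$, we instead exploit that $\mathcal{G}_{n,d}$ a.a.s.\ has \emph{no} small non-trivial edge cuts: every set $S$ with $2 \le |S| \le n/2$ has $e(S, \bar S) $ growing, so the only small edge cuts are the $d$ trivial ones around single vertices. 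One then checks that~\cite[Theorem~3.1]{LTWZ} (respectively Thomassen's theorem) has a formulation --- as the cited papers do --- where the hypothesis is ``every edge cut is either trivial or large'', which $\mathcal{G}_{n,d}$ satisfies a.a.s.; the single-vertex cuts are harmless because $k \mid d$ fails in general but the orientation theorem only forbids \emph{non-trivial} small cuts in which both sides have their degree sums non-divisible. I would carry the steps out in this order: (1) state the orientation $\Leftrightarrow$ $k$-star decomposition equivalence as a lemma; (2) record the a.a.s.\ ``no small non-trivial edge cut'' property of $\mathcal{G}_{n,d}$; (3) dispatch (i) via $2$-factorizations / Eulerian orientations; (4) apply~\cite{LTWZ}/Thomassen for (ii) and (iii), checking the numeric hypothesis on $d$ vs.\ $k$ and the divisibility $n \in \mathcal{N}_{d,k}$.

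\medskip

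\noindent\emph{Main obstacle.} The delicate point --- and the reason the authors emphasize that the situation ``is not as straightforward'' --- is matching the \emph{exact} hypotheses of~\cite[Theorem~3.1]{LTWZ}: that result needs $k$ odd, and it needs an edge-connectivity/degree condition strictly stronger than plain $d$-edge-connectivity, so a single vertex of degree $d$ with $k \nmid d$ is \emph{not} automatically fine. I expect the real work is verifying that the precise cut condition demanded by~\cite{LTWZ} is implied by the a.a.s.\ structural properties of $\mathcal{G}_{n,d}$ (no small non-trivial edge cuts, plus possibly a bound on the number of vertices in any small-degree-deficient set), and in the case $k$ even handling the absence of the parity hypothesis --- which is exactly why (ii) forces the large gap $d \ge 2k^2+k$, where Thomassen's unconditional modular-orientation bound kicks in. Part~(i)'s $k=2$ and the general odd-$k$ case (iii) additionally need the observation that $dn/2 \equiv 0 \pmod k$ along $\mathcal{N}_{d,k}$, which supplies the global divisibility that the orientation theorems require.
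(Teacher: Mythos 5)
Your proposal takes essentially the same route as the paper: reduce $k$-star decompositions to orientations with all out-degrees (equivalently in-degrees) divisible by $k$, then apply Thomassen (2012) for part~(ii) and Lov\'asz--Thomassen--Wu--Zhang~\cite[Theorem~3.1]{LTWZ} for part~(iii), feeding in the a.a.s.\ $d$-connectivity of $\mathcal{G}_{n,d}$ plus an ``every non-trivial edge cut is large'' (internal edge-connectivity) lemma to bridge the gap between $d$-edge-connectivity and the cut condition that~\cite[Theorem~3.1]{LTWZ} actually demands. Two small slips worth correcting: the $k=2$ case of (i) cannot be handled ``alongside (iii)'' since~(iii) requires $k$ odd and $k=2$ is even --- the paper instead invokes the classical fact that a connected graph with an even number of edges admits a $P_3$-decomposition; and in~(ii) the a.a.s.\ edge-connectivity of $\mathcal{G}_{n,d}$ is exactly $d$, not ``vastly exceeding'' the threshold: the hypothesis $d\geq 2k^2+k$ is chosen precisely so that $d$-edge-connectivity meets Thomassen's bound, and Thomassen's Theorem~5 gives $K_{1,k}$-decompositions directly (for all $k$), so no separate parity argument is required.
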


We believe that the a.a.s.\ existence for remaining unknown values with $k\leq d/2$
can be resolved by extending the result of~\cite{LTWZ} to even~$k$.
For random regular graphs, it may also be possible to use the Expander Mixing Lemma 
to achieve this (when the degree is sufficiently large), as was done by Alon and Pra{\l}at for the Jaeger conjecture in~\cite{AP}.

\medskip

Now we turn to the case $k>d/2$.
The following non-existence result is established using the first moment approach
in Section~\ref{sec:ind-negative}.

\begin{theorem}
\label{thm:independence}
Suppose that $d\geq 3$. 
If $(d,k)=(5,4)$ or $k > \kind{d}$, then 
\[ \Pr\big(\mathcal{G}_{n,d}  \text{ has a $k$-star decomposition}\big)
\rightarrow 0.\]
\end{theorem}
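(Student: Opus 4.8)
The plan is to split into two parts: first showing $\kindactual{d}\le\kind{d}$ via a first-moment (expectation) argument, then deducing the vanishing-probability statement using the necessary condition \eqref{connection-leaves} together with a separate treatment of the exceptional pair $(d,k)=(5,4)$.

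For the inequality $\kindactual{d}\le\kind{d}$, I would use the pairing (configuration) model for $\mathcal{G}_{n,d}$ and compute the expected number of independent sets of size $m=(2k-d)n/(2k)$. Write $X$ for this count. Then $\E X = \binom{n}{m}\cdot P_{\text{no edge inside}}$, where the second factor is the probability, in the random perfect matching on $dn$ points, that no pair joins two points in the chosen $m$ vertices; a standard computation gives this probability as a ratio of double-factorials, asymptotically $\big(\tfrac{dn-dm}{dn}\big)^{dm/2}\!\cdot(1+o(1))^{?}$ type expressions — more precisely one gets $\E X = \exp\big(n(\phi(m/n)+o(1))\big)$ for an explicit analytic function $\phi$ of the density $\alpha=m/n=(2k-d)/(2k)$. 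Applying Stirling to $\binom{n}{m}$ and to the double-factorials and simplifying, the condition $\phi(\alpha)<0$ (which forces $\E X\to 0$, hence $X=0$ a.a.s., hence $\kindactual{d}<k$) should be exactly the negation of \eqref{eq:k+}; conversely the largest $k$ for which $\phi>0$ is $\kind{d}$. So the main content here is a careful but routine Stirling/double-factorial asymptotic calculation, checking that the resulting inequality matches \eqref{eq:k+} term by term after substituting $\alpha=(2k-d)/(2k)$ and clearing denominators to get the displayed form with exponents $(2k-d)/d$, $(d^2-2k)/d$, $k(d-2)/d$, $d-k$. I expect this bookkeeping — getting the exponents to line up exactly — to be the main obstacle, though it is mechanical.

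For the probability statement: if $k>\kindactual{d}$, then by definition of $\kindactual{d}$ the graph $\mathcal{G}_{n,d}$ a.a.s.\ has no independent set of size $(2k-d)n/(2k)$, so by the contrapositive of \eqref{connection-leaves} it a.a.s.\ has no $k$-star decomposition; that gives $\Pr(\cdots)\to 0$ directly. The only remaining case is $(d,k)=(5,4)$, where $(2k-d)n/(2k)=3n/8$ and it may happen that $3n/8\le\kindactual{5}\cdot n/\text{(something)}$ — i.e.\ the independence-ratio obstruction is not available, so $4=\kindactual{5}$ is possible and \eqref{connection-leaves} alone does not suffice. Here I would argue directly with a sharper first-moment bound on $4$-star decompositions of $5$-regular graphs: in such a decomposition every centre has in-degree $4$ and every leaf in-degree $0$, the leaves form an independent set of size $3n/8$, and each centre sends its one remaining (out-)edge to another centre, so the $5$-regular graph restricted to the centres contains a spanning subgraph in which every vertex has degree exactly... — one sets up the configuration-model count of structures consisting of (a choice of the leaf set $L$ of size $3n/8$) together with (a perfect matching on the $4|L|$ half-edges at $L$ to half-edges at $V\setminus L$, exhausting $4$ of each centre's $5$ half-edges) and (a perfect matching of the remaining $|V\setminus L|=5n/8$ half-edges at the centres among themselves), and shows the expected number of such configurations embeddable in $\mathcal{G}_{n,5}$ tends to $0$. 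The hard part in this sub-case will be that the naive first moment over independent sets of size $3n/8$ alone is $\ge 1$ (that is exactly why $(5,4)$ is exceptional), so one genuinely needs the extra constraint coming from the centre-to-centre matching to push the expectation below $1$; getting that refined count and its asymptotics right is the crux.

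Finally I would remark that combining the two parts: $\kindactual{d}\le\kind{d}$ together with the probability statement for all $k>\kindactual{d}$ (and for $(5,4)$) yields the full theorem, and note that this is precisely the "negative side" of Conjecture \ref{our-conjecture}, the positive side being the subject of the other theorems in the paper.
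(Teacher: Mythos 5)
Your proposal follows essentially the same route as the paper: a first-moment calculation in the configuration model for independent sets of size $(2k-d)n/(2k)$ to get $\kindactual{d}\le\kind{d}$, the necessary condition \eqref{connection-leaves} for all $k>\kindactual{d}$, and a separate first-moment bound on the number of $4$-star-decomposition orientations in the $5$-regular configuration model for the exceptional pair $(5,4)$ (this is exactly the paper's Lemma~\ref{L:k-star}, where the exponential base $c(5,4)\approx 0.901<1$). Apart from a small bookkeeping slip ($L$ has $5|L|$, not $4|L|$, half-edges, and the centre--centre matching involves $2\cdot 5n/8$ remaining half-edges), the argument is sound and matches the paper's.
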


\medskip

\noindent 
The next result is an extension of the work~\cite{DP} for the case $(d,k)=(4,3)$.
We prove it using the small subgraph 
conditioning method~\cite{janson,RW92}, see Section~\ref{s:SSCM} and Section~\ref{s:applying}.
\begin{theorem}
Let $d\geq 4$ and $d/2 < k \leq d/2 + \max\left\{1,\, \dfrac{1}{6}\log d\right\}$.
Then
\[ \Pr\big(\mathcal{G}_{n,d}  \text{ has a $k$-star \emph{decomposition}}\big)\rightarrow 1.\]
\label{thm:main2}
\end{theorem}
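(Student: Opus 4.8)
The plan is to apply the general sufficient existence condition that the paper develops for $k > d/2$ (the numerically checkable criterion promised in the abstract and in the discussion preceding Theorem~\ref{thm:independence}), and to verify that criterion analytically for the stated range $d/2 < k \le d/2 + \max\{1,\tfrac16\log d\}$. Since the $k$-star decomposition, when $k>d/2$, is equivalent to choosing the set of leaves, which must be an independent set of size $\ell := (2k-d)n/(2k)$ whose removal leaves a graph in which every remaining vertex has in-degree exactly $k$ under the induced orientation, the natural approach is the second moment method applied to the random variable $X$ counting such configurations (independent dominating-type sets together with a valid assignment of the $\ell$ leaf-edges to centres). First I would write $X = \sum_S X_S$ where $S$ ranges over candidate leaf-sets and $X_S$ counts the orientations/assignments compatible with $S$, compute $\E[X]$ in the configuration model, and observe that $\E[X] \to \infty$ precisely when (a strengthening of) inequality \eqref{eq:k+} holds — this is why $\kind{d}$ is exactly the expectation threshold. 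The bulk of the work is then to show $\E[X^2] = (1+o(1))(\E[X])^2$, i.e.\ that the second moment is concentrated, which via the small-subgraph-conditioning method of Robinson–Wormald (or a direct variance computation) gives $X > 0$ a.a.s.

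The key steps, in order, are: (1) set up the configuration-model computation of $\E[X]$, identifying the exponential rate as the function appearing in \eqref{eq:k+}, and confirm $\E[X]\to\infty$ for $k \le \kind{d}$; (2) show that the range $d/2 < k \le d/2 + \max\{1,\tfrac16\log d\}$ lies below this threshold — equivalently, verify \eqref{eq:k+} (with room to spare) for all such $(d,k)$ by estimating the right-hand side, using $\log$-convexity and the fact that $2k-d$ is small relative to $d$, so that the dominant terms are $2^{(d^2-2k)/d}$ and the $k,(d-k)$ factors, and a Taylor expansion around $k=d/2$ shows the inequality holds with the additive slack $\tfrac16\log d$; (3) compute $\E[X^2]$ by summing over pairs $(S,S')$ according to the overlap profile (size of $S\cap S'$ and the pattern of shared edges/centres), show the sum is dominated by the "independent" term $|S\cap S'|\approx \ell^2/n$, and bound the correction; (4) invoke small-subgraph conditioning — compute the joint limiting distribution of $(X, \{\text{number of short cycles}\}_{j\ge 1})$ and check that the variance is entirely explained by the cycle counts — to upgrade "$\E[X^2]=O((\E X)^2)$" to "$X>0$ a.a.s." Finally, transfer the result from the configuration model to $\mathcal{G}_{n,d}$ via contiguity, using the divisibility restriction $n\in\mathcal{N}_{d,k}$ throughout.

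The main obstacle I expect is step~(3)–(4): controlling the second moment uniformly over the overlap structure. Unlike the plain independent-set count (where small-subgraph conditioning for $\mathcal{G}_{n,d}$ is classical), here $X_S$ also counts edge-to-centre assignments, so the pair sum involves not just $|S\cap S'|$ but how the two leaf/centre partitions interact on the shared vertices and on the edges between $S\triangle S'$; bounding the entropy of these joint configurations and showing the off-diagonal terms are negligible — in particular ruling out a "condensation"-type contribution from pairs with atypically large overlap — is the technical heart of the argument, and is presumably where the precise shape of \eqref{eq:k+} and the restriction to $k$ close to $d/2$ are really used. A secondary, more routine difficulty is carrying out step~(2): the right-hand side of \eqref{eq:k+} is a product of four terms with exponents depending on $d$ and $k$, and showing $d^{d-1}$ beats it for all $d\ge 4$ and all $k$ in the claimed window requires a careful but elementary estimate, splitting into the cases $k = \lceil d/2\rceil$ (where there is a lot of slack) and $k$ near the upper end $d/2 + \tfrac16\log d$ (where the slack in \eqref{eq:k+} is thinnest and the $\tfrac16$ presumably arises).
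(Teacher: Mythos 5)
Your high-level plan is the same as the paper's: work in the configuration model, take $Y$ to be the number of valid orientations (each cell has in-degree $0$ or $k$), compute $\E Y$, verify conditions (A1)--(A4) of the small subgraph conditioning theorem, with (A4) requiring a Laplace-type estimate for $\E Y^2$ as a sum over overlap profiles, and transfer back to $\mathcal{G}_{n,d}$ via simplicity. All of that matches. The key misstep is in your step (2) and the closing paragraph: you attribute the $\tfrac16\log d$ cutoff to the first-moment inequality~\eqref{eq:k+}, saying the $\tfrac16$ ``presumably arises'' where that slack is thinnest. But the first moment is nowhere near tight in this range: $\kind{d}\approx d/2+\log d$ asymptotically, so $k\le d/2+\tfrac16\log d$ satisfies~\eqref{eq:k+} with a wide margin (the paper only needs $k\le 1.01\,d/2$ to check it, and the relevant verification in the proof of Lemma~\ref{L:k-star} is routine). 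Likewise \eqref{eq:P2}, i.e.\ $(2k-d)^2 < 4k-d-2$, only needs $2k-d = O(\sqrt d)$, so it too is far from binding.

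Where $k\le d/2+\tfrac16\log d$ is actually used is in proving that the exponent function $\varphi(\bvec)$ in the Laplace summation for $\E Y^2$ has a \emph{unique} interior global maximum at $\bvec^*$ — this is Lemma~\ref{lem:global-max}, which is the bottleneck for condition (A4). The paper reduces uniqueness of the stationary point to showing that the single-variable equation $(x+1)f'(x)/k=(1+\eta(x))f(x)$ has $x=1$ as its only root in $(0,\infty)$. For $x\in(0,1)$ this follows from convexity arguments (Lemma~\ref{L3}); for $x>1$ and $k>\lceil(d+1)/2\rceil$, the paper splits $(1,\infty)$ at $(5k-2d)/(d-k)$ and the argument on the finite piece (Lemma~\ref{L2}) requires the bound $f(x)\le k/(2k-d)^2$. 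Establishing that bound via $\log f(x)\le \tfrac35\log d$ versus $k/(2k-d)^2\ge d^{3/5}$ is precisely where $2k-d\le\tfrac13\log d$, i.e.\ $k\le d/2+\tfrac16\log d$, is needed. So your instinct that the restriction to $k$ near $d/2$ is ``really used'' in controlling atypical overlaps is correct, but your step~(2) discussion pointing to \eqref{eq:k+} as the binding constraint is wrong, and without the analytic content of Lemma~\ref{lem:global-max} (the reduction to a scalar equation in $x$, the convexity and Taylor-coefficient positivity arguments of Section~\ref{s:maximisation}), your proposal does not yet contain the hard part of the proof.
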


\vspace*{-\baselineskip}

The constant $\frac{1}{6}$ is an artifact of our proof. We believe that the result should be true with $\frac{1}{6}$ replaced by something close to 1, 
which would match (\ref{eq:k+asymptotics}).
Finally, in Section~\ref{s:numerical} we provide an implicit sufficient condition for the a.a.s.\ existence
of $k$-star decompositions. 

\subsection{Numerically-verifiable sufficient condition for existence }\label{s:numerical}

Define the polynomial $f$ and function $\eta$ on $[0,\infty$) by
   \begin{align}
   	f(x) &= \frac{1}{\binom{d}{k}}\, \sum_{i=0}^{d-k} \binom{k}{i} \binom{d-k}{i} x^{k-i}, \label{expan0}\\
 \eta(x) &=   \frac{2k-d}{d-k + \sqrt{(d-k)^2 + d(2k-d)\, f(x)}}.
   \label{eq:eta-def} 
   \end{align}
Note that $f(x)$ is  the PGF of the hypergeometric distribution with parameters $(d,k,k)$.  
For $d\geq 3$ we define $\kSSCM{d}$ to be the largest 
 integer such that for $k$ satisfying
 $d/2 < k < \kSSCM{d}$ we have that
\begin{equation}
\label{eq:P2}
   (2k-d)^2 <4k-d-2
   \end{equation}
   and
\begin{equation}\label{eq:P1}
	 \begin{aligned}
		 (1+\eta(x)) f(x)  = \frac{(x+1)\, f'(x)}{k} \quad \text{has unique solution $x=1$} 
		 \\
		 \text{on} \quad
		 \left(\left(1+\dfrac{(2k-d)^2 d}{k (d-k) (4k-d-2-(2k-d)^2)}\right)^{-1}, \dfrac{5k-2d}{d-k}\right).
	\end{aligned}
\end{equation}
The condition (\ref{eq:P1}) can be checked numerically for
any specific pair $(d,k)$;  see Subsection~\ref{ss:small-values} for an example.
Direct computations show that $k = \lceil  \frac{d+1}{2}\rceil$ 
satisfies~\eqref{eq:P2}, and the estimates of Section~\ref{ss:proof-B-k0} 
show that~\eqref{eq:P1} also holds for this value of $k$.
On the other hand, (\ref{eq:P2}) fails for all $k\geq d$ which shows that 
$\kSSCM{d}$ is well-defined.

\bigskip

\noindent Our final main result is the following theorem, which will be proved
together with Theorem~\ref{thm:main2} using 
the small subgraph conditioning method; 
see Section~\ref{s:SSCM} and Section~\ref{s:applying}.

\begin{theorem}
\label{thm:main}
If $d\geq 3$ and $d/2 < k \leq \kSSCM{d}$, then
\[ \Pr\big(\mathcal{G}_{n,d}  \text{ has a $k$-star decomposition}\big)\to 1.\]
\end{theorem}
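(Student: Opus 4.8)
The plan is to apply the small subgraph conditioning method to the random variable $X$ counting $k$-star decompositions of $\mathcal{G}_{n,d}$ (equivalently, of the random $d$-regular multigraph produced by the configuration model, after conditioning on simplicity). Recall that this method shows $\Pr(X>0)\to c>0$ provided one can verify: (a) the first moment $\E X$ grows like a suitable exponential, (b) the ratio of second to first moment squared, $\E X^2/(\E X)^2$, converges to a finite limit of the form $\exp\!\big(\sum_{j\geq 1}\lambda_j\delta_j^2\big)$, and (c) the joint factorial moments of the short-cycle counts together with $X$ match those predicted by independent Poisson$(\lambda_j)$ variables with $\E[X\prod (\text{falling factorials})]/\E X\to \prod \lambda_j(1+\delta_j)^{\cdot}$. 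Items (a) and (c) are routine in this framework (the short-cycle computation reduces to tracking how an edge of a random $k$-star decomposition sits in the configuration), so the crux is (b): showing $\E X^2/(\E X)^2$ stays bounded, which is exactly a large-deviation/analytic estimate on a sum over ``overlap types'' of two decompositions.

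First I would compute $\E X$ via the configuration model: a $k$-star decomposition corresponds to choosing which $dn/(2k)$ vertices are centres and then pairing up the $d$ configuration points so that every point at a centre is matched to a point at a leaf inside the same star and the stars partition the edges; the exponential rate here is governed by $f(x)$ (the hypergeometric PGF in~\eqref{expan0}) through a saddle-point evaluation, and $\eta(x)$ in~\eqref{eq:eta-def} arises as the solution of the saddle-point equation fixing the centre/leaf proportions $\tfrac{dn}{2k}$ versus $\tfrac{(2k-d)n}{2k}$. For $\E X^2$ I would parametrise a pair of decompositions by the bivariate ``pattern'' recording, at each vertex, its role (centre/leaf) in each of the two decompositions and, at each edge, how the two star-structures overlap; $\E X^2/(\E X)^2$ then becomes $\sum (\text{exponential of } n\cdot \Phi)$ over a compact parameter region, and one must show the function $\Phi$ is maximised uniquely at the ``diagonal'' point where the two decompositions behave independently, with a strictly negative-definite Hessian there. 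This is precisely where conditions~\eqref{eq:P2} and~\eqref{eq:P1} enter: \eqref{eq:P2} guarantees the relevant quadratic form at the diagonal is negative definite (the local maximum condition), and \eqref{eq:P1}, the statement that the one-variable reduction $(1+\eta(x))f(x)=(x+1)f'(x)/k$ has $x=1$ as its only solution on the stated interval, rules out any competing global maximum; the interval endpoints in~\eqref{eq:P1} come from bounding where such a competing critical point could possibly lie, using~\eqref{eq:P2}.

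The main obstacle, and the most delicate step, is the global analysis underlying~\eqref{eq:P1}: after Lagrange-reducing the second-moment optimisation to a lower-dimensional problem, one still has to prove that no off-diagonal point beats the diagonal, and this cannot be done by a soft convexity argument because $\Phi$ is genuinely non-concave in general. The strategy is to reduce to the single scalar equation in~\eqref{eq:P1} by solving all but one of the stationarity conditions explicitly (this is where $\eta$ and $f$ recombine into $(1+\eta(x))f(x) = (x+1)f'(x)/k$), confine the remaining variable $x$ to the interval dictated by~\eqref{eq:P2}, and then show $x=1$ is the unique root there — numerically for a given $(d,k)$, and analytically for the boundary case $k=\lceil(d+1)/2\rceil$ via the estimates promised in Section~\ref{ss:proof-B-k0}. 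Once this uniqueness is in hand, a standard Laplace/Gaussian-approximation argument converts the sum for $\E X^2$ into $(\E X)^2$ times the convergent product $\exp(\sum_j\lambda_j\delta_j^2)$, and the small subgraph conditioning theorem yields $\Pr(X>0)\to c>0$.

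Finally, to upgrade ``$\Pr(X>0)\to c>0$'' to ``$\Pr(X>0)\to 1$'' as stated, I would invoke the standard sharpening available in this setting: either a direct argument that the decomposition property is closed under the relevant local switchings so that $\Pr(X>0)$ cannot have a limit strictly between $0$ and $1$, or the now-routine observation that small subgraph conditioning results of this type, combined with the contiguity it provides to a product of Poisson variables, force $c=1$ here because the only obstruction to a decomposition captured by the $\lambda_j$'s is the short-cycle structure, which is a.a.s.\ benign. (If instead only $c>0$ is obtainable by this route, the clean statement of Theorem~\ref{thm:main} would require the extra contiguity/monotonicity input, which I would flag as the one remaining gap to fill before the second-moment computation proper.)
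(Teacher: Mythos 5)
Your outline follows the paper's actual strategy: work in the configuration model, apply the small subgraph conditioning theorem (Theorem~\ref{thm:subgraph}) to the number of $k$-star decompositions, reduce the second-moment ratio to a Laplace-type sum over vertex-overlap types, and pin the unique global maximum of the exponent on the ``diagonal'' by reducing the stationarity system to the scalar equation $(1+\eta(x))f(x)=(x+1)f'(x)/k$, with~\eqref{eq:P2} supplying negative definiteness of the Hessian at the diagonal and~\eqref{eq:P1} ruling out competing roots. This is precisely the programme of Sections~\ref{s:negative}--\ref{s:maximisation}.

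Your last paragraph, however, rests on a misconception you should clear up before relying on it. The small subgraph conditioning theorem does \emph{not} merely give $\Pr(X>0)\to c$ for some unspecified $c>0$ that then needs ``upgrading.'' Janson's theorem gives $Y_n/\E Y_n\to W$ in distribution with $W=\prod_j(1+\delta_j)^{Z_j}e^{-\lambda_j\delta_j}$, and $W>0$ almost surely if and only if every $\delta_j>-1$. Here $\delta_j=\big((d-2k+1)/(d-1)\big)^j$ from~\eqref{eq:delta}, which satisfies $\delta_j>-1$ for all $j$ whenever $k<d$, so $W>0$ a.s.; hence $0$ is a continuity point of the law of $W$ and $\Pr(Y_n=0)=\Pr(Y_n/\E Y_n\leq 0)\to\Pr(W\leq 0)=0$. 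No switching argument, monotonicity principle, or separate contiguity input is required. The only remaining transfer is from the pairing model to $\mathcal{G}_{n,d}$, which is~\eqref{negative-useful} and is standard. So the ``one remaining gap'' you flag does not exist, provided you verify $\delta_j>-1$, which you should state explicitly.

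Two smaller inaccuracies worth fixing. First, $\eta$ does not arise from the first-moment saddle point: the first moment~\eqref{first-moment}--\eqref{expectation} is an elementary count and involves no saddle. The function $\eta$ appears only in the second-moment analysis, as the positive branch when solving the quadratic~\eqref{eq1} for $y=(x+1)f'(x)/k$. Second, the paper's overlap parametrization is purely vertex-level: the vector $\bvec=(B_0,\ldots,B_{d-k})$ records, for each $i$, the number of vertices that are centres in both orientations with exactly $k-i$ shared in-points; no edge-level pattern needs to be tracked separately. Finally, the case $(d,k)=(4,3)$ is outside the range of Lemma~\ref{lem:global-max} and is handled by citing~\cite{DP}; any write-up along your lines needs the same base case.
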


\medskip
Using this condition we have numerically verified Conjecture~\ref{our-conjecture} for $3\leq d\leq 200$ and $k\geq d/2$. In particular, we discovered that $\kSSCM{d}=\kind{d}$ for 
more than 60\% of the values of $d$; in these cases there is no gap between our existence
and non-existence results, confirming the existence of a threshold $k^*(d)$.
Furthermore, for $d\in \{10,21,42,44,83,85,158,160,162\}$, we can prove that
$k^*(d)=\kSSCM{d}=\kind{d}-1$ using the value of $\alpha^{\text{1-RSB}}(d)$ and
(\ref{eq:indset-nonexistence}).

\begin{figure}[ht!]
\begin{center}
\begin{tikzpicture}
\node at (0,0) {\includegraphics[scale=0.28]{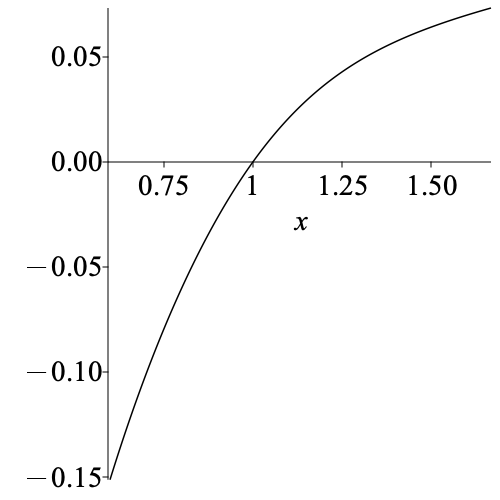}};  
\node at (7,0) {\includegraphics[scale=0.28]{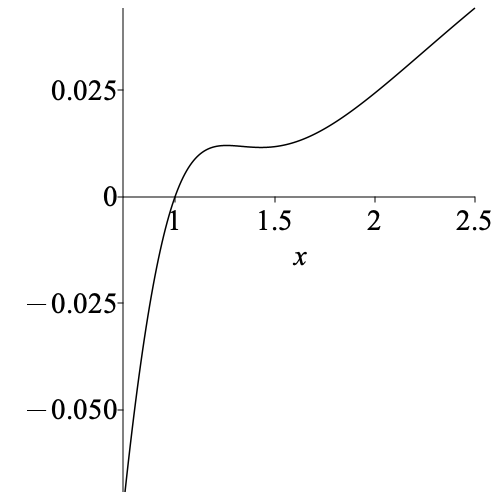}};  
\node at (0.5,-3.0) {$d=20$,\, $k=11$};
\node at (7.5,-3.0) {$d=20$,\, $k=12$};
\end{tikzpicture}
\caption{Plots of $\widehat{f}(x)$, defined in (\ref{eq:widehatf}), for $d=20$ and $k=11,12$.} 
\label{f:plots}
\end{center}
\end{figure}

As an illustration, we show how to compute $\kSSCM{d}$ when \mbox{$d=20$}.
Figure~\ref{f:plots} contains plots of the function 
\begin{equation}
\label{eq:widehatf}
\widehat{f}(x):= \frac{k(1+\eta(x))\, f(x)}{(x+1)\, f'(x)} - 1
\end{equation}
on the range specified in (\ref{eq:P1}) for $d=20$ and $k=11,12$. 
Since $\widehat{f}(x)=0$
if and only if (\ref{eq:P1}) holds, and since (\ref{eq:P2})
holds for all these values of $(d,k)$, these plots prove that $\kSSCM{20}=12$. 

\medskip

Direct computations show that $\kind{20}=12$. 
Since $\kSSCM{20}=\kind{20}$, it follows that $k^*(20)=12$. Combining 
Theorem~\ref{thm:small-k}, Theorem~\ref{thm:independence} and Theorem~\ref{thm:main} we conclude that
Conjecture~\ref{our-conjecture} holds when $d=20$ for all $k=2,\ldots, 19$ except possibly
for $k=8$.

\subsection{Small values of $d$ and $k$}\label{ss:small-values}

In Table~\ref{table:values} 
and Table~\ref{table:big} we summarise what is known about the a.a.s.\ existence or non-existence of $k$-star decompositions of $\mathcal{G}_{n,d}$. 

\renewcommand{\arraystretch}{1.2}

\begin{table}[ht!]
\begin{center}
\begin{tabular}{|c|c|c|c|c|c|c|c|c|c|c|c|c|c|c|c|c|c|c|}
\hline
$d$ & 3 & 4 & \cellcolor{gray!15}5 & 6 & 7 & 8 & 9 & \cellcolor{gray!15}10 & 11 & \cellcolor{gray!15}12 & 13 & \cellcolor{gray!15}14 & 15 & 16 & 17 & 18 & 19 & 20\\
\hline
$\kSSCM{d}$ & 2 & 3 & \cellcolor{gray!15}3 & 4 & 5 & 5 & 6 & \cellcolor{gray!15}6 & 7 & \cellcolor{gray!15}7 & 8 & \cellcolor{gray!15}8 & 9 & 10 & 10 & 11 & 11 & 12\\
\hline
$\kind{d}$ & 2 & 3 & \cellcolor{gray!15}4 & 4 & 5 & 5 & 6 & \cellcolor{gray!15}7 & 7 & \cellcolor{gray!15}8 & 8 & \cellcolor{gray!15}9 & 9 & 10 & 10 & 11 & 11 & 12\\
\hline
\end{tabular}
\caption{The values of $\kSSCM{d}$ and $\kind{d}$ for $d=4,5, \ldots, 20$.}
\label{table:values}
\end{center}
\end{table}

Table~\ref{table:values} shows that for $d=3,\ldots, 20$ we have $\kSSCM{d}=\kind{d}$ unless $d\in\{5,10,12,14\}$.
Using these values and our results, we can fill in most entries of Table~\ref{table:big}
for $3\leq d\leq 20$ and $2\leq k < d$. 
The highlighted cell in each row is $(d,\kSSCM{d})$.  Every 
cell strictly to the right of $(d,\kind{d})$ is zero: columns for $k=17,18,19$ are
not shown but contain only zero entries.  The entries marked with an asterix do not follow
from our theorems, but required additional arguments which we describe below.

\medskip

\begin{table}[ht!]
\begin{center}
\begin{tabular}{|c|c|c|c|c|c|c|c|c|c|c|c|c|c|c|c|}
\hline
\diagbox{$d$}{$k$} &
 2 & 3 & 4 & 5 & 6 & 7 & 8 & 9 & 10 & 11 & 12 & 13 & 14 & 15 & 16 \\
\hline
$3$ & \cellcolor{gray!15} 1 & 0 & 0 & 0 & 0 & 0 & 0 & 0 & 0 & 0 & 0 & 0 & 0 & 0 & 0  \\
\hline
$4$ & 1 & \cellcolor{gray! 15} 1 & 0 & 0 & 0 & 0 & 0 & 0 & 0 & 0 & 0 & 0 & 0 & 0 & 0  \\
\hline
$5$ & 1 & \cellcolor{gray!15} 1 & 0 & 0 & 0 & 0 & 0 & 0 & 0 & 0 & 0 & 0 & 0 & 0 & 0  \\
\hline
$6$ & 1 & 1 & \cellcolor{gray!15} 1 & 0 & 0 & 0 & 0 & 0 & 0 & 0 & 0 & 0 & 0 & 0 & 0  \\
\hline
$7$ & 1 & 1 & 1 & \cellcolor{gray!15} 1 & 0 & 0 & 0 & 0 & 0 & 0 & 0 & 0 & 0 & 0 & 0  \\
\hline
$8$ & 1 & 1 & 1 & \cellcolor{gray!15} 1 & 0 & 0 & 0 & 0 & 0 & 0 & 0 & 0 & 0 & 0 & 0  \\
\hline
$9$ & 1 & 1 & {\large \bf ?} & 1 & \cellcolor{gray!15} 1 & 0 & 0 & 0 & 0 & 0 & 0 & 0 & 0 & 0 & 0 \\
\hline
$10$ & 1 & 1 & {\large \bf ?} & 1 & \cellcolor{gray!15} 1 & {\bf $0^*$} & 0 & 0 & 0 & 0 & 0 & 0 & 0 & 0 & 0  \\
\hline
$11$ & 1 & 1 & {\large \bf ?} & 1 & 1 & \cellcolor{gray!15} 1 & 0 & 0 & 0 & 0 & 0 & 0 & 0 & 0 & 0  \\
\hline
$12$ & 1 & 1 & {\bf $1^*$} & 1 & 1 & \cellcolor{gray!15} 1 & {\large \bf ?} & 0 & 0 & 0 & 0 & 0 & 0 & 0 & 0  \\
\hline
$13$ & 1 & 1 & {\bf $1^*$} & 1 & {\large \bf ?} & 1 & \cellcolor{gray!15} 1 & 0 & 0 & 0 & 0 & 0 & 0 & 0 & 0  \\
\hline
$14$ & 1 & 1 & {\bf $1^*$} & 1 & {\large \bf ?} & 1 & \cellcolor{gray!15} 1 & {\large \bf ?} & 0 & 0 & 0 & 0 & 0 & 0 & 0  \\
\hline
$15$ & 1 & 1 & {\bf $1^*$} & 1 & {\large \bf ?} & 1 & 1 & \cellcolor{gray!15} 1 & 0 & 0 & 0 & 0 & 0 & 0 & 0  \\
\hline
$16$ & 1 & 1 & 1 & 1 & {\large \bf ?} & 1 & 1 & 1 & \cellcolor{gray!15} 1 & 0 & 0 & 0 & 0 & 0 & 0  \\
\hline
$17$ & 1 & 1 & {\large \bf ?} & 1 & {\large \bf ?} & 1 & {\large \bf ?} & 1 & \cellcolor{gray!15} 1 & 0 & 0 & 0 & 0 & 0 & 0  \\
\hline
$18$ & 1 & 1 & {\bf $1^*$} & 1 & {\bf $1^*$} & 1 & {\large \bf ?} & 1 & 1 & \cellcolor{gray!15} 1 & 0 & 0 & 0 & 0 & 0  \\
\hline
$19$ & 1 & 1 & {\bf $1^*$} & 1 & {\bf $1^*$} & 1 & {\large \bf ?} & 1 & 1 & \cellcolor{gray!15} 1 & 0 & 0 & 0 & 0 & 0  \\
\hline
$20$ & 1 & 1 & {\bf $1^*$} & 1 & {\bf $1^*$} & 1 & {\large \bf ?} & 1 & 1 & 1 & \cellcolor{gray!15} 1 & 0 & 0 & 0 & 0  \\
\hline
\end{tabular}
\caption{Limit of $\Pr(\mathcal{G}_{n,d} \,\, \text{has a $k$-star decomposition})$
for $3\leq d\leq 20$, where known.}
\label{table:big}
\end{center}
\end{table}

Since $\kSSCM{10}=6$ and $\kind{10}=7$, Theorem~\ref{thm:independence} and
Theorem~\ref{thm:main2} do not allow us to solve the case $(d,k)=(10,7)$.
To fill this cell in Table~\ref{table:big} we used the 1-RSB prediction $\alpha^{\text{1-RSB}}(10)\approx 0.281$ as
an upper bound on $\alpha^*(10)$,  together with (\ref{eq:indset-nonexistence}).

\medskip

Now we discuss the positive entries which have been marked with an asterix.
Let $(\mathcal{A}_n)$ and $(\mathcal{B}_n)$ be sequences of probability spaces 
on the same sequence of finite underlying sets $(\Omega_n)$. We say that
$(\mathcal{A}_n)$ and $(\mathcal{B}_n)$ are \emph{contiguous},
and write $\mathcal{A}_n\approx\mathcal{B}_n$, if for any
sequence of events $(\mathcal{E}_n)$ we have
\[ \lim_{n\to\infty} \Pr_{\mathcal{A}_n}(\mathcal{E}_n) = 1 \quad 
    \text{ if and only if } \quad
   \lim_{n\to\infty} \Pr_{\mathcal{B}_n}(\mathcal{E}_n) = 1. \]
The \emph{superposition} (also called \emph{graph-restricted sum} in~\cite{Wormald-survey}) of two random graph models $G_n$ and $H_n$
on the vertex set $[n]$, denoted $G_n\oplus H_n$,
is the probability space obtained by randomly choosing $G$ from $G_n$ 
and $H$ from
$H_n$ repeatedly until $E(G)\cap E(H)=\emptyset$, then 
returning $([n],E(G)\cup E(H))$. Using 
contiguity arithmetic~\cite[Corollary~4.17]{Wormald-survey} we know that 
if $d\geq 3$ and 
$a_1,\ldots, a_s$ are positive integers such that $d = \sum_{i=1}^s a_i$ then
\begin{equation}
\label{eq:contiguity} \mathcal{G}_{n,d} \approx \mathcal{G}_{n,a_1} \oplus  \mathcal{G}_{n,a_2}
   \oplus \cdots \oplus \mathcal{G}_{n,a_s}.
   \end{equation}
If two graphs with disjoint
edge sets have $k$-star decompositions, then their union also has a $k$-star
decomposition.  We used this fact, combined with (\ref{eq:contiguity}), 
to fill in some entries in Table~\ref{table:big} with $k\leq d/2$ which were
not covered by Theorem~\ref{thm:small-k}.  For example, 
$\mathcal{G}_{n,6}$ a.a.s.\ has a 4-star decomposition, 
and 
$\mathcal{G}_{n,12}\approx \mathcal{G}_{n,6}\oplus \mathcal{G}_{n,6}$,
which implies that $\mathcal{G}_{n,12}$ a.a.s.\ has a 4-star decomposition.
Similarly $\mathcal{G}_{n,20}$ a.a.s.\ has a 4-star decomposition since 
$\mathcal{G}_{n,20}\approx \mathcal{G}_{n,7}\oplus \mathcal{G}_{n,7}\oplus \mathcal{G}_{n,6}$. 

There is one caveat for the positive results obtained using contiguity arithmetic: a stronger divisibility condition may be needed for the existence of the $k$-star decompositions in the factors. For example, the divisibility condition for $(d,k)=(20,4)$ is that 8 divides $20n$,
(equivalently, that 2 divides $n$),
while for $(7,4)$ we require that 8 divides $7n$ (equivalently, that 8 divides $n$).
In other words, the cells marked $1^*$ in Table~\ref{table:big} are a.a.s.\ existence
results that hold as $n$ tends to infinity along an \emph{infinite subsequence} of $\mathcal{N}_{d,k}$.	

\medskip

To complete this section we comment on the unknown values in 
Table~\ref{table:big}.
As mentioned earlier, we believe that the a.a.s.\ existence for remaining unknown 
pairs with $k\leq d/2$
can be resolved by extending the result of~\cite{LTWZ} to even values of $k$.
In particular, Lov{\' a}sz et al.~\cite[Section~5.2]{LTWZ} claim (without proof)
that their
approach can be extended to $(3k-2)$-edge-connected graphs for even $k$,
which would provide an existence result for the cases 
\[ (d,k)\in \{ (10,4)\, (11,4),\, (17,4),\, (16,6),\, (17,6)\}.\]

We believe that the $(d,k)=(14,9)$ cell in Table~\ref{table:big} should be~1. 
Condition (\ref{eq:P1}) ensures that a certain function has a unique
local maximum at $x=1$.  In fact, our approach would still work if other 
local maxima exist but have smaller values than the value at $x=1$.
The function is difficult to analyse, which is why we restrict our 
attention to the simpler situation when there is only one local maximum.
This more complicated analysis will work for $(d,k)=(14,9)$ but we do not
include those details here. If, as we believe, $\mathcal{G}_{n,14}$ a.a.s.\ has a 9-star
decomposition, then $\alpha^*(14) \geq 2/9 = 0.222\ldots$,
by~(\ref{eq:indset-existence}).
This would improve on the best-known lower bound $0.2143$ proved by
Wormald~\cite[Theorem~5]{wormald95}. 
On the other hand, we do not offer any prediction for the cell $(12,8)$.

\subsection{Outline of the paper}\label{s:structure}

In Section~\ref{s:small-k} we prove Theorem~\ref{thm:small-k}.
Part (i) follows easily from known results about 2-star decompositions
and Eulerian orientations,
while parts (ii) and (iii) are proved using results of
Lov{\' a}sz, Thomassen, Wu and Zhang~\cite{LTWZ} and 
Thomassen~\cite{thomassen}, respectively.

In Section~\ref{s:negative} we prove Theorem~\ref{thm:independence}
and state the small subgraph conditioning theorem from Janson~\cite[Theorem~1]{janson}. The rest of the paper is devoted to verifying the assumptions
of this theorem applied to our problem.  In particular, Theorems~\ref{thm:main2} and Theorem~\ref{thm:main} are proved in Section~\ref{ss:consolidation}, 
utilising calculations performed in Sections~\ref{s:first-cycles} and~\ref{s:second-moment}. The most technical part of the second moment calculations are postponed until Section~\ref{s:maximisation}.  

Our small subgraph conditioning proof follows the same outline as that
given in~\cite{DP} for the case $(d,k)=(4,3)$, 
but it is noticeably more general. 
Carrying the proof with $(d,k)$ instead of two fixed numbers 
adds to the difficulty and technicality of the proofs. 
We have also provided
detailed proofs for the positive results which follow from~\cite{LTWZ,thomassen}
and use contiguity arithmetic to establish further positive results, 
as explained in Section~\ref{ss:small-values}.

\section{Small values of $k$}\label{s:small-k}

In this section we prove Theorem~\ref{thm:small-k}.
Given a $k$-star decomposition of a graph $G$, we can orient all edges of $G$ towards
the centre of the star containing that edge. This allows us to make a connection
with the theory of $\beta$-orientations, as described below.
We will repeatedly use the following result from Wormald~\cite{Wormald-d-connected}:
\begin{equation}
\label{eq:d-connected}
\text{ If $d\geq 3$ is fixed, then }\, \mathcal{G}_{n,d}\, \text{ is a.a.s.\ $d$-connected}.
\end{equation}
It is well known that a connected graph with an even number of edges
has a 2-star decomposition; see for example~\cite{kotzig} or \cite[Theorem~1]{CS}.
Next, let $G$ be a $2rk$-regular graph $G$ with $r\in\mathbb{Z}^+$.
Then $G$ has an Eulerian orientation, where 
each vertex $v$ has exactly $rk$ in-edges. 
These in-edges can be partitioned to give exactly $r$ distinct $k$-stars centred at 
$v$. These observations imply that Theorem~\ref{thm:small-k}(i) holds. 

\bigskip

Theorem~\ref{thm:small-k}(ii) follows immediately from
Thomassen~\cite[Theorem~5]{thomassen} using (\ref{eq:d-connected}).

To prove Theorem~\ref{thm:small-k}(iii) we use machinery from
Lov{\' a}sz et al.~\cite[Definition~1.9]{LTWZ}. 
Denote the set of integers modulo $k$ by $\mathbb{Z}_k$.  A function
$\beta:V(G)\to \mathbb{Z}_k$ is called a $\mathbb{Z}_k$-\emph{boundary} of $G$
if $\sum_{v\in V(G)}\beta(v)\equiv 0 \pmod{k}$.
	Given a $\mathbb{Z}_k$-boundary of $G$, an orientation $D$
	of $G$ is called a $\beta$-\emph{orientation} if
	for every vertex $v\in V(G)$,
\[ d^+_D(v) - d^-_D(v) \equiv \beta(v)\pmod{k}.\]
The following lemma establishes a connection with $k$-star decompositions.

\begin{lemma}
Let $G$ be a $d$-regular graph with $n$ vertices such that $2k$ divides $dn$,
where $k\geq 3$ is an odd positive integer. Then $G$ has a $k$-star
decomposition if and only if $G$ has a $\beta$-orientation with $\beta(v)\equiv d\pmod{k}$ for all $v\in V(G)$.
\label{lem:connection}
\end{lemma}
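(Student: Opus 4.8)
\textbf{Proof proposal for Lemma~\ref{lem:connection}.}
The plan is to establish the equivalence by relating both sides to a single combinatorial object: an orientation $D$ of $G$ in which every vertex has in-degree congruent to $0$ modulo $k$. First I would observe that since $G$ is $d$-regular, for any orientation $D$ and any vertex $v$ we have $d^+_D(v)+d^-_D(v)=d$, hence $d^+_D(v)-d^-_D(v)=d-2d^-_D(v)$. Therefore $D$ is a $\beta$-orientation with $\beta(v)\equiv d\pmod k$ for all $v$ if and only if $2d^-_D(v)\equiv 0\pmod k$ for all $v$. Here is where oddness of $k$ is used: since $\gcd(2,k)=1$, the condition $2d^-_D(v)\equiv 0\pmod k$ is equivalent to $d^-_D(v)\equiv 0\pmod k$. (One should also check the boundary condition $\sum_v\beta(v)\equiv 0\pmod k$ is satisfied, i.e.\ $dn\equiv 0\pmod k$; this follows from the hypothesis that $2k$ divides $dn$.) So the lemma reduces to: $G$ has a $k$-star decomposition iff $G$ has an orientation in which every in-degree is a multiple of $k$.

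For the forward direction, given a $k$-star decomposition, orient every edge of each star towards its centre, as in the discussion preceding~\eqref{connection-leaves}. Each vertex $v$ is the centre of some number $t_v\geq 0$ of stars and a leaf of the rest; its in-degree is exactly $kt_v$, a multiple of $k$. For the reverse direction, suppose $D$ is an orientation with $d^-_D(v)=k t_v$ for every $v$. Then at each vertex $v$ partition its $kt_v$ in-edges arbitrarily into $t_v$ groups of size $k$; each group, together with $v$ as centre, is a $k$-star. Doing this at every vertex partitions $E(G)$ (each edge lies in exactly one such star, namely the one centred at its head under $D$), giving a $k$-star decomposition. This direction uses only that the in-degrees are multiples of $k$, not anything about $k$ being odd.

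I do not expect a serious obstacle here; the only subtlety is making the modular arithmetic precise, specifically being careful that the passage from $2d^-_D(v)\equiv 0$ to $d^-_D(v)\equiv 0\pmod k$ genuinely requires $k$ odd (for even $k$ one would only get $d^-_D(v)\equiv 0\pmod{k/2}$, which is why the companion extension to even $k$ discussed after Table~\ref{table:big} needs $(3k-2)$-edge-connectivity and a different argument). A minor bookkeeping point is confirming the $\mathbb{Z}_k$-boundary condition from the divisibility hypothesis, and noting that when $k > d/2$ each $t_v\in\{0,1\}$, recovering the centre/leaf dichotomy, though the lemma itself does not need this restriction.
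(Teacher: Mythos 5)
Your argument is correct and follows the same route as the paper's proof: rewrite $d^+_D(v)-d^-_D(v)\equiv d\pmod k$ as $2d^-_D(v)\equiv 0\pmod k$, invoke oddness of $k$ to get $k\mid d^-_D(v)$, and then translate between orientations with in-degrees divisible by $k$ and $k$-star decompositions by grouping in-edges at each vertex. The paper states this more tersely, but the substance is identical; your extra checks (the $\mathbb{Z}_k$-boundary condition and the remark on where oddness is needed) are sound and worth keeping.
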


\begin{proof}
Since $k$ is odd and $d^+_D(v) + d^-_D(v) = d$ for all $v\in V(G)$, the condition $d^+_D(v) - d^-_D(v)\equiv d\pmod{k}$
is equivalent to saying that $k$ divides the in-degree of $v$.
When this condition holds for $v\in V(G)$ we may partition incoming edges
at $v$ into $k$-stars centred at $v$, and vice-versa.
\end{proof} 

Combining (\ref{eq:d-connected}) and the above lemma with the result 
of Lov{\' a}sz et al.~stated below, we
establish Theorem~\ref{thm:small-k}(iii) when $k \leq (d+1)/3$ and $k$ odd.

\begin{theorem}[{\cite[Theorem~1.12]{LTWZ}}]
Let $k\geq 3$ be an odd integer. Every $(3k-3)$-edge-connected graph
$G$ has a $\beta$-orientation for every $\mathbb{Z}_k$-boundary $\beta$
of $G$.
\end{theorem}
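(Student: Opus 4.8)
The plan is to argue by induction on $|E(G)|$ via the standard contraction method for modulo‑$k$ orientations; this is the Lov\'asz--Thomassen--Wu--Zhang argument. Call a graph $H$ \emph{$\mathbb{Z}_k$-connected} if it has a $\beta$-orientation for every $\mathbb{Z}_k$-boundary $\beta$ of $H$ --- this is exactly the conclusion wanted for $G$. The engine of the induction is that $\mathbb{Z}_k$-connected subgraphs can be contracted: if $H$ is a connected proper subgraph of $G$ with $|V(H)|\ge 2$ that is $\mathbb{Z}_k$-connected, form $G/H$; contracting a connected subgraph cannot lower global edge-connectivity, so $G/H$ is again $(3k-3)$-edge-connected, and $\beta$ descends to a $\mathbb{Z}_k$-boundary $\beta'$ of $G/H$ by setting $\beta'$ at the contracted vertex to $\sum_{v\in V(H)}\beta(v)$. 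By induction $G/H$ has a $\beta'$-orientation $D'$, which fixes the orientation of every edge between $H$ and $V(G)\setminus V(H)$; what remains is to orient $E(H)$, and the constraint is precisely that $H$ carry a $\beta_H$-orientation for $\beta_H(v)=\beta(v)-(\text{net contribution at }v\text{ of the already-oriented boundary edges})$. A one-line computation using that $D'$ realizes $\beta'$ shows $\sum_{v\in V(H)}\beta_H(v)\equiv 0\pmod k$, so $\beta_H$ is a legitimate boundary and $\mathbb{Z}_k$-connectedness of $H$ completes the orientation of $G$. Hence it suffices to treat \emph{reduced} graphs: $(3k-3)$-edge-connected graphs containing no connected $\mathbb{Z}_k$-connected subgraph on at least two vertices.

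Second, I would build a library of small $\mathbb{Z}_k$-connected multigraphs to serve as reducible configurations, working in the category of multigraphs throughout (contraction produces parallel edges) and passing back to simple graphs only at the end. Since for odd $k$ the residue $2$ is invertible modulo $k$, a vertex of degree $d$ can realize an arbitrary residue of $d^+-d^-$ modulo $k$ exactly when $d\ge k-1$ (the progression $-d,-d+2,\dots,d$ then sweeps out all of $\mathbb{Z}_k$), so every $\mathbb{Z}_k$-connected graph has minimum degree $\ge k-1$ and the relevant configurations are comparatively dense. Candidates and closure operations to verify by finite (if fiddly) casework on orientations modulo $k$ include: $k+1$ parallel edges between two vertices; adding a parallel edge to, or gluing at a single vertex two copies of, a $\mathbb{Z}_k$-connected graph; and certain odd-wheel-type gadgets --- this last family is where the hypothesis ``$k$ odd'' is genuinely needed, and indeed for even $k$ parity forces $d^+(v)-d^-(v)\equiv d(v)\pmod 2$, so the $3k-3$ statement simply fails, which is why it is stated only for odd $k$.

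Third, and this is where the bulk of the difficulty lies, I would prove the structural lemma: a reduced $(3k-3)$-edge-connected graph on at least two vertices must contain one of the library configurations, a contradiction. Here the hypotheses are used quantitatively. Edge-connectivity $3k-3$ forces $\delta(G)\ge 3k-3$ and, by Nash--Williams--Tutte, the existence of $\lfloor (3k-3)/2\rfloor=\tfrac{3(k-1)}{2}$ edge-disjoint spanning trees (here $k$ odd makes $3k-3$ even). Fixing such a family, one orients all non-tree edges arbitrarily and then propagates an orientation through the trees from the leaves inward to meet $\beta$ modulo $k$, in the spirit of Thomassen's proof; the point of having $\tfrac{3(k-1)}{2}$ trees, each leaving slack of order $k$ per step, against a single modulus $k$, is that $3k-3$ is precisely the threshold at which the slack suffices --- either the propagation goes through (producing the orientation directly) or it jams at a bounded-degree spot whose local structure is one of the reducible configurations. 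Controlling this propagation --- essentially a counting/discharging argument over the spanning-tree edges, matching the arithmetic of $3k-3$ and exploiting the invertibility of $2$ modulo $k$ at each fork --- is the main obstacle and the technical core of the argument.

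Finally, the induction closes: a non-reduced $G$ is handled by contracting a $\mathbb{Z}_k$-connected subgraph and lifting as in the first paragraph, while the structural lemma rules out reduced graphs on $\ge 2$ vertices, so the base case $|V(G)|=1$ (where $\beta\equiv 0$ and the empty orientation works) finishes the proof.
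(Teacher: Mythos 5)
This statement is not proved in the paper at all: it is quoted verbatim as Theorem~1.12 of Lov\'asz--Thomassen--Wu--Zhang \cite{LTWZ} and used as a black box (alongside the restated \cite[Theorem~3.1]{LTWZ}), so there is no in-paper proof to compare yours against. Judged on its own terms, your first paragraph is correct and is genuine LTWZ machinery: contracting a $\mathbb{Z}_k$-connected subgraph preserves $(3k-3)$-edge-connectivity, the boundary descends, and the orientation lifts back because the induced boundary $\beta_H$ sums to $0$ modulo $k$. Your degree bound $d\geq k-1$ for realizing all residues (using that $2$ is invertible mod $k$ for odd $k$) and your explanation of why the statement must exclude even $k$ are also right.

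The genuine gap is your third step. The ``structural lemma'' --- that a reduced $(3k-3)$-edge-connected graph on at least two vertices contains a configuration from your library --- is the entire theorem in disguise, and the mechanism you propose for it (Nash--Williams--Tutte packing of $\tfrac{3(k-1)}{2}$ spanning trees plus leaf-to-root propagation) is not an argument but a heuristic: ``each tree leaves slack of order $k$ per step against a single modulus $k$'' does not identify what ``jams'' or why a jam yields a reducible configuration. More importantly, this is not how the $3k-3$ bound is obtained and there is no known proof along these lines; the spanning-tree/propagation method is essentially the pre-2012 approach and never yielded edge-connectivity bounds linear in $k$ (for $k=3$ it was not even known that any constant suffices before Thomassen). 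The actual LTWZ proof replaces it with an induction on a strengthened statement about a graph with a distinguished vertex $z_0$ of large degree and a parity function $\tau$ (their Theorem~3.1, which the present paper restates as Theorem~\ref{thm:LTWZ-restated}), carried out by lifting edges at $z_0$, contracting, and analysing small edge cuts --- no discharging, no reducible configurations, no tree packing. So while your outer induction is sound, the core of the proof is missing rather than merely sketched.
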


To cover the remaining range, that is $(d+1)/3 < k < d/2$,
we need a more technical result from~\cite{LTWZ}.
In this range, we let $\beta(v) = d -2k\in \{0,1,\ldots, k-1\}$ for all $v\in V(G)$.
Lov{\' a}sz et al.~\cite{LTWZ} use a function $\tau:V(G)\to \{0,\pm 1,\ldots, \pm k\}$
such that for each vertex $v\in V(G)$,
\[ \tau(v) \equiv \beta(v)\!\!\! \pmod{k} \qquad \text{ and }\qquad
   \tau(v) \equiv d \!\!\! \pmod{2}.\]
In our range we have $\tau(v)=\beta(v)=d-2k$ for all $v\in V$.
They extend $\tau$ to a function over subsets of $V(G)$ such that $|\tau(A)|\leq k$ for all $A\subseteq V(G)$.

We will state a special case of their theorem which suffices for our purposes,
obtained by applying~\cite[Theorem~3.1]{LTWZ}
to the disjoint union of a $d$-regular graph and an isolated vertex $z_0$,
using the fact that $\tau(v)$ is always nonzero and the observations above.
Let $e(A,\bar{A})$ denote the number of edges from $A$ to 
$\bar{A} = [n]\setminus A$, for all $A\subseteq [n]$.
A graph is said to be \emph{internally $r$-edge-connected} if for every
cut with at least two vertices on either side, the number of edges
across the cut is at least~$r$. 

\begin{theorem}[{\cite[Theorem~3.1]{LTWZ}}]
Let $G$ be a $d$-regular graph with $d\geq 3$, 
and let $k$ be an odd integer such that $(d+1)/3 < k <d/2$. 
If $G$ is internally $(3k-2)$-edge-connected,
then there exists a $\beta$-orientation $D$ of $G$, where 
$\beta$ is the $\mathbb{Z}_k$-boundary of $G$ defined by $\beta(v) = d-2k$ for all $v\in V(G)$.
\label{thm:LTWZ-restated}
\end{theorem}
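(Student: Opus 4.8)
The plan is to obtain Theorem~\ref{thm:LTWZ-restated} by specialising the general orientation result \cite[Theorem~3.1]{LTWZ}, which produces a $\beta$-orientation of a graph $H$ carrying a $\mathbb{Z}_k$-boundary $\beta$ and the associated function $\tau$ provided every edge cut $e_H(A,\bar{A})$ with $\emptyset\neq A\subsetneq V(H)$ is at least a threshold depending only on the sizes of $A$ and $\bar{A}$ and on the value $\tau(A)$. First I would record the data attached to our $d$-regular $G$ in the range $(d+1)/3<k<d/2$: every vertex has $\beta(v)=d-2k\in\{1,\dots,k-2\}$, hence $\tau(v)=\beta(v)=d-2k\neq 0$; and $\sum_{v\in V(G)}\beta(v)=n(d-2k)\equiv nd\equiv 0\pmod k$ because $2k\mid dn$, so $\beta$ really is a $\mathbb{Z}_k$-boundary.

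To make the hypothesis come out in the stated form, I would apply \cite[Theorem~3.1]{LTWZ} not to $G$ but to $H:=G\sqcup\{z_0\}$, where $z_0$ is an isolated vertex and $\beta(z_0):=0$; then $\beta$ is still a $\mathbb{Z}_k$-boundary of $H$, and since $k$ is odd we get $\tau(z_0)=0$. Adjoining $z_0$ leaves all edge cuts of $G$ unchanged but turns every singleton of $V(G)$, and every set of $V(G)$ whose complement is a single vertex, into a set of $H$ whose smaller side has size $2$; this is why the assumption in the statement only needs to be imposed for $A\subseteq V(G)$ with $|A|\geq 2$ and $|V(G)\setminus A|\geq 2$. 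Moreover, any $\beta$-orientation of $H$ restricts to a $\beta$-orientation of $G$, since $z_0$ bears no edges.

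It then remains to verify the cut condition of \cite[Theorem~3.1]{LTWZ} for every $\emptyset\neq A\subsetneq V(H)$. The cuts around $\{z_0\}$ and around $V(G)$ are empty and have $\tau$-value $0$, the degenerate case the general theorem disposes of directly (so the disconnectedness of $H$ is harmless). The cuts that separate off a single vertex of $G$, or off a set of size $n-1$, have size exactly $d$; but for such a set $A$ the value $\tau(A)$ is $\equiv\pm(d-2k)\pmod k$ and the parity constraint pins it down to $|\tau(A)|=d-2k$, which is small enough (at most $k-2$) that the threshold demanded of these sets stays at roughly $2k-2+(d-2k)=d-2\leq d$; this is precisely the point at which $\tau(v)\neq 0$ and the observations above are used. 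For every other set $A$ the threshold is bounded above by a fixed linear expression in $|\tau(A)|$, hence by $3k-2$ because $|\tau(A)|\leq k$, so the single hypothesis $e(A,\bar{A})\geq 3k-2$ covers all of them regardless of the actual value of $\tau(A)$.

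The real work — and the main obstacle — is not any one estimate but matching the bookkeeping of \cite[Theorem~3.1]{LTWZ}: its hypothesis is stated through $\tau$ and its extension to subsets, with $|\tau(A)|\leq k$ and congruence and parity constraints tying $\tau(A)$ to $\sum_{v\in A}\beta(v)$ and to $e(A,\bar{A})$, and the cut threshold is a case analysis in the sizes of $A$, $\bar{A}$ and in whether $\tau(A)=0$. One has to check carefully that feeding in $d$-regularity, the range $(d+1)/3<k<d/2$, the boundary $\beta\equiv d-2k$, and the isolated vertex $z_0$ collapses every one of those cases into the single inequality $e(A,\bar{A})\geq 3k-2$ on sets $A$ of $G$ with $2\leq|A|\leq n-2$ — and, in particular, that the singleton requirement and the requirement for near-complete sets are met automatically by $d$-regular graphs in this range and so do not have to be assumed.
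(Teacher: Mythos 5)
Your proposal follows the same route the paper takes: restating \cite[Theorem~3.1]{LTWZ} for the disjoint union $G\sqcup\{z_0\}$ with an isolated vertex $z_0$ and $\beta(z_0)=0$, then observing that $\beta(v)=\tau(v)=d-2k\neq 0$ for every $v\in V(G)$ in the range $(d+1)/3<k<d/2$, so that singleton cuts and their complements are absorbed by the hypothesis. The paper itself gives only a one-sentence justification (``obtained by applying \cite[Theorem~3.1]{LTWZ} to the disjoint union of a $d$-regular graph and an isolated vertex $z_0$, using the fact that $\tau(v)$ is always nonzero''), so the level of detail you supply — including the final paragraph conceding that the cut-threshold case analysis of the original theorem must be checked to collapse to the single bound $3k-2$ — is in fact slightly more explicit than the paper's. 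This is essentially the same argument, and correct.
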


The proof of Theorem~\ref{thm:small-k}(iii) is completed
by combining Lemma~\ref{lem:connection}, Theorem~\ref{thm:LTWZ-restated} and the following lemma.

\begin{lemma}
Let $d\geq 4$ be an integer. Then $\mathcal{G}_{n,d}$ is a.a.s.\ internally $2(d-1)$-edge-connected.
\end{lemma}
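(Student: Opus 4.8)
The plan is to show that a.a.s.\ every edge cut of $\mathcal{G}_{n,d}$ separating two parts each of size at least $2$ has at least $2(d-1)$ crossing edges. The only obstruction to a cut $(A,\bar A)$ having fewer than $2(d-1)$ crossing edges, given $|A|,|\bar A|\geq 2$, is that $A$ (or $\bar A$) is small and dense. Indeed, if $|A|=a$ then $e(A,\bar A) = da - 2e(A)$ where $e(A)$ is the number of internal edges, so $e(A,\bar A) \geq 2(d-1)$ fails only if $e(A) > \tfrac12(da - 2d+2) = \tfrac{d(a-2)}{2}+1$, i.e.\ $e(A) \geq \lceil \tfrac{d(a-2)}{2}\rceil + 1$. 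For $a=2$ this needs $e(A)\geq 2$, impossible in a simple graph; for $a=3$ it needs $e(A)\geq \lceil d/2\rceil+1 \geq 3$ but $e(A)\leq 3$, so it needs $d\leq 4$ and $A$ to span a triangle; for general small $a$ it forces $A$ to span close to a clique. So the strategy is: first reduce, by the observation above, to showing that a.a.s.\ no vertex subset $A$ with $2\leq a \leq a_0$ (for a suitable constant $a_0$) spans more than $\tfrac{d(a-2)}{2}+1$ edges, and separately handle $a_0 < a \leq n/2$.

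For the small-$a$ range I would run a first-moment calculation in the configuration (pairing) model $\mathcal{G}_{n,d}^*$, using that an event holding a.a.s.\ in $\mathcal{G}_{n,d}^*$ holds a.a.s.\ in $\mathcal{G}_{n,d}$ since $d$ is constant. Fix $a$ and let $m = \lceil \tfrac{d(a-2)}{2}\rceil + 1$; the expected number of pairs $(A, M)$ where $|A|=a$ and $M$ is a set of $m$ internal edges realised in the pairing is at most
\[
\binom{n}{a}\binom{\binom{a}{2}}{m}\,(da)^{2m}\,\frac{(dn - 2m)!!}{(dn)!!}
\leq \binom{n}{a}\binom{\binom{a}{2}}{m}(da)^{2m}(dn)^{-m}
= O\!\left(n^{a-m}\right),
\]
where the combinatorial factors are absorbed into the constant since $a\leq a_0$. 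Because $m = \lceil \tfrac{d(a-2)}{2}\rceil+1 \geq a-1$ for all $a\geq 2$ once $d\geq 4$ (as $\tfrac{d(a-2)}{2}\geq 2(a-2)\geq a-2$), we get $a-m\leq -1$, so this expectation is $O(1/n)\to 0$. Summing over $2\leq a\leq a_0$ (a fixed number of terms) still gives $o(1)$, so a.a.s.\ no such dense small set exists.

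For the remaining range I would invoke expansion. By~\eqref{eq:d-connected}, $\mathcal{G}_{n,d}$ is a.a.s.\ $d$-connected, which already gives $e(A,\bar A)\geq d$; to push past $d$ I would use the well-known a.a.s.\ edge-expansion of random $d$-regular graphs: there is a constant $c=c(d)>0$ such that a.a.s.\ every $A$ with $a_0 \leq |A| \leq n/2$ has $e(A,\bar A) \geq c|A| \geq c\,a_0$, and choosing $a_0$ large enough (a fixed constant depending only on $d$) makes $c\,a_0 \geq 2(d-1)$. Such an edge-expansion statement is standard (e.g.\ via a first-moment argument in the pairing model analogous to the one above, or by citing known isoperimetric results for $\mathcal{G}_{n,d}$); alternatively one can cite Bollob\'as's result that the edge-expansion constant of a random $d$-regular graph is bounded below by a positive constant a.a.s. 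Combining the two ranges: a.a.s.\ every cut with both sides of size at least $2$ has at least $2(d-1)$ crossing edges, which is exactly the claim. The main obstacle is pinning down a clean, citable form of the linear edge-expansion bound with an explicit enough constant to guarantee $c\,a_0\geq 2(d-1)$; once that is in hand, the small-set first-moment computation is routine and the reduction is elementary.
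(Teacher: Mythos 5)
Your plan is essentially the paper's proof: deal with small $A$ by bounding the internal edge count (the paper cites Wormald's Lemma~2.7 to get that a.a.s.\ every $A$ with $3\leq|A|\leq 11(d-1)$ spans at most $|A|$ edges, whence $e(A,\bar A)\geq|A|(d-2)\geq 2(d-1)$), and deal with large $A$ via a linear isoperimetric bound (the paper cites Bollob\'as's result $e(A,\bar A)\geq\tfrac{2}{11}|A|$, which exceeds $2(d-1)$ once $|A|>11(d-1)$). Your first-moment sketch is a perfectly acceptable substitute for the Wormald citation, and your ``main obstacle'' is exactly resolved by using Bollob\'as's explicit constant $\tfrac{2}{11}$ with the matching cutoff $a_0=11(d-1)$.

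One arithmetic slip worth fixing: from $m\geq a-1$ you conclude $a-m\leq -1$, but $m\geq a-1$ only gives $a-m\leq 1$, which is not enough. The bound you actually have is stronger, namely $\tfrac{d(a-2)}{2}\geq 2(a-2)$ so $m\geq 2a-3$, hence $a-m\leq 3-a\leq -1$ \emph{for $a\geq 4$}. The cases $a=2,3$ then must be excluded from the first-moment sum — but this is harmless, since (as you observe in your reduction paragraph) for $a=2,3$ and $d\geq 4$ the required $m$ exceeds $\binom{a}{2}$, so no offending subset can exist at all and the corresponding expectations are identically zero. With that minor repair the argument is correct and essentially identical in structure to the paper's.
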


\begin{proof}
Let $A\subseteq [n]$ be a subset of vertices such that $2\leq |A| \leq n/2$. 
If $|A|=2$, then $e(A,\bar{A})\geq 2d-2$, with equality if
the two vertices in $A$ are adjacent. 

More generally, 
a.a.s.\ for any vertex subset $A$ with $3\leq |A| \leq 11(d-1)$, the induced
subgraph $\mathcal{G}_{n,d}[A]$ contains at most $|A|$ edges,
for example using Wormald~\cite[Lemma~2.7]{Wormald-survey}. 
	Hence a.a.s., every
	$A$ with $3\leq |A| \leq 11(d-1)$ satisfies
	\[ e(A,\bar{A}) \geq |A|(d-2) \geq 2(d-1),\]
	using the fact that $d\geq 4$.

Now suppose that $|A| > 11(d-1)$. 
It follows from 
Bollob{\' a}s~\cite[Theorem~1 and Corollary~2]{bollobas-iso}
that in $\mathcal{G}_{n,d}$, the following a.a.s.\ holds:  for all $A$ with $|A|  > 11(d-1)$ we have 
\[ e(A,\bar{A}) \geq \frac{2}{11} |A| > 2(d-1)\]
as required.
\end{proof}

\section{Configuration model}\label{s:negative}

Throughout the paper, we use $\log$ to denote the natural logarithm.
By convention, $0^0 = 1$ and $0\log 0=0$.  Let $(a)_b = a(a-1)\cdots(a-b+1)$
denote the falling factorial, for $a,b\in\mathbb{N}$.

Our calculations will be performed in the \emph{configuration model}
(or \emph{pairing model}) for $d$-regular graphs on $n$ vertices, denoted by
$\Omega_{n,d}$.  In the configuration model there are $n$ cells, each 
containing $d$ points.  A \emph{pairing}
is a partition of the $dn$ points into $dn/2$ unordered pairs.  Replacing
cells by vertices and pairs by edges, each pairing $P$ corresponds to
a multigraph $G(P)$ which may have loops or multiple edges.  The number of
pairings on $2a$ points is denoted by
\begin{equation}
\label{nr-pairings}
 M(2a) = \frac{(2a)!}{a!\, 2^a}.
\end{equation}
Bender and Canfield~\cite{BC} proved that the probability that a randomly chosen configuration
from $\Omega_{n,d}$ is simple is 
\begin{equation}\label{eq:simple}
	\Pr(\text{Simple}) \sim \exp\big(-(d^2-1)/4\big).
	\end{equation}
Let $Y$ be any random variable on $\Omega_{n,d}$ 
which satisfies $Y(P)=Y(P')$ for any $P,P'\in\Omega_{n,d}$ such that $G(P)=G(P')$.
The corresponding random variable on graphs, $Y_{\mathcal{G}}$, is 
defined by $Y_{\mathcal{G}}\big(G(P)\big) = Y(P)$.
Since every $d$-regular graph on $[n]$ corresponds to the same
number of pairings (specfically, $(d!)^n$ pairings).
it follows from (\ref{eq:simple}) that
\begin{equation}
\label{negative-useful}
\Pr(Y_{\mathcal{G}} = 0) = \Pr(Y = 0 \mid \text{Simple})
   \leq \frac{\Pr(Y = 0)}{\Pr(\text{Simple})} = O\big(\Pr(Y = 0)\big).
   \end{equation}
   In particular, if $\Pr(Y=0)\to 0$, then a.a.s.\ $Y_{\mathcal{G}}>0$.

\subsection{Proof of Theorem~\ref{thm:independence}}\label{sec:ind-negative}

Our proof of Theorem~\ref{thm:independence} relies on the first moment approach.  Similar calculations can be found in~\cite{DSS,FL}.
Let $Z_\alpha$ denote the number of independent sets of size $\alpha n$ in the 
configuration model $\Omega_{n,d}$, where $\alpha\in (0,1)$ is fixed.  Then
\[ \E Z_\alpha = \binom{n}{\alpha n}\, (dn-d\alpha n)_{d\alpha n}\, \frac{M(dn-2d\alpha n)}{M(dn)}.
\]
This follows as there are $\binom{n}{\alpha n}$ ways to select a set $U$ of
$\alpha n$ cells, then we wish to pair all $d\alpha n$ points in these cells to
points in cells outside $U$, which can be done in $(dn-d\alpha n)_{d\alpha n}$ ways.
We can then complete these $d\alpha n$ pairs to a full pairing in $M(dn-2d\alpha n)$
ways. In this pairing, $U$ corresponds to an independent set.  Finally,
we multiply by $1/M(dn)$ which is the probability that we observe this
pairing under the uniform distribution.

Applying (\ref{nr-pairings}) and Stirling's approximation gives
\begin{align}
	\E Z_\alpha &= \frac{n!\, (dn-d\alpha n)!\, (dn/2)!\, 2^{d\alpha n}}{(\alpha n)!\, \big((1-\alpha)n\big)!\, (dn/2-d\alpha n)!\, (dn!)} \nonumber \\
  &\sim \sqrt{\frac{1}{2\pi\alpha(1-2\alpha)\, n}}\,
	\left(\frac{(1-\alpha)^{(d-1)(1-\alpha)}}{\alpha^\alpha\, (1-2\alpha)^{d(1-2\alpha)/2}}\right)^n. \label{eq:EZ}
\end{align}
Finally we substitute $\alpha=(2k-d)/(2k)$, writing $Z$ instead of
$Z_{(2k-d)/(2k)}$ for ease of notation, giving 
\[ \E Z \sim \frac{k}{\sqrt{\pi (2k-d)\, (d-k)\,n}}\, 
  \left(\frac{d^{d-1}}{(2k-d)^{(2k-d)/d}\, (d-k)^{d-k} \, 2^{(d^2-2k)/d}\, k^{k(d-2)/d}}
   \right)^{dn/(2k)}.\]
If $k > \kind{d}$, then the base of the exponential factor is bounded above by~1, and hence $\E Z \to 0$.  
Hence the expected number of 
independent sets in $\mathcal{G}_{n,d}$ of size $n-dn/(2k)$ also
tends to zero, by applying (\ref{negative-useful}) with $Y=Z$.  
This proves the result in this case, using (\ref{eq:indset-nonexistence})
and Markov's inequality.  We defer the proof of the result for $(d,k)=(5,4)$ to Lemma~\ref{L:k-star}.
 
\subsection{Small subgraph conditioning method}\label{s:SSCM}

To prove Theorem~\ref{thm:main} and Theorem~\ref{thm:main2} we will apply the small subgraph conditioning method, introduced by Robinson and Wormald~\cite{RW92}. 
This statement is taken from \cite{janson}.
	
	\begin{theorem}[Janson \protect{\cite[Theorem 1]{janson}}] \label{thm:subgraph}
		Let $\lambda_j > 0$ and $\delta_j \ge -1$, $j = 1, 2, \dots$, be constants and suppose that for each $n$ there are random variables $X_{j,n}$, $j = 1,2, \dots$, and $Y_n$ (defined on the same probability space) such that $X_{j,n}$ is nonnegative integer valued and $\E Y_n \ne 0$ (at least for large $n$), and furthermore the following conditions are satisfied:
		\begin{enumerate}
			\item[\emph{(A1)}] $X_{j,n} \overset{d}{\longrightarrow} Z_j$ as $n \to \infty$ jointly for all $j$, where $Z_j \sim \operatorname{Po}(\lambda_j)$ are independent Poisson random variables;
			\item[\emph{(A2)}] For any finite sequence $x_1, \dots, x_m$ of nonnegative integers,
			\[
			\frac{\E(Y_n | X_{1,n} = x_1, \dots, X_{m,n} = x_m)}{\E Y_n} \to \prod_{j=1}^m (1+ \delta_j)^{x_j} e^{- \lambda_j \delta_j} \quad \text{ as } n \to \infty;
			\]
			\item[\emph{(A3)}] $\displaystyle \sum_{j \ge 1} \lambda_j \delta_j^2 < \infty$;
			\item[\emph{(A4)}] $\dfrac{\E Y_n^2}{(\E Y_n)^2} \to \exp\left( \displaystyle \sum_{j \ge 1} \lambda_j \delta_j^2 \right) \quad \text{ as } n \to \infty$.
		\end{enumerate}
		Then
		\[
		\frac{Y_n}{\E Y_n} \,\,\, \overset{d}{\longrightarrow} \,\,\, W = \prod_{j=1}^\infty (1+\delta_j)^{Z_j} e^{-\lambda_j \delta_j} \quad \text{ as } n \to \infty;
		\]
		moreover, this and the convergence in \emph{(A1)} hold jointly. 
	In particular, $W > 0$ almost surely if and only if every $\delta_j > -1$. 
	\end{theorem}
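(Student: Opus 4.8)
Since this statement is precisely Theorem~1 of Janson~\cite{janson}, I would reproduce the structure of that argument: realise $W$ as the limit of an $L^2$‑bounded martingale, approximate $Y_n/\E Y_n$ by its conditional expectation given the first $m$ of the counts $X_{j,n}$, and then let $m\to\infty$. First I would record the properties of the limit. Put $W_m=\prod_{j=1}^m (1+\delta_j)^{Z_j}e^{-\lambda_j\delta_j}$. Since $\E(1+\delta_j)^{Z_j}=e^{\lambda_j\delta_j}$ for $Z_j\sim\operatorname{Po}(\lambda_j)$, the sequence $(W_m)_{m\ge1}$ is a nonnegative martingale with $\E W_m=1$, and $\E W_m^2=\prod_{j=1}^m e^{\lambda_j\delta_j^2}$, which is bounded by (A3). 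Hence $W_m\to W$ almost surely and in $L^2$, with $\E W=1$ and $\E W^2=\exp\big(\sum_{j\ge1}\lambda_j\delta_j^2\big)$; in particular $W$ is well defined. The final dichotomy I would treat separately at the end: if some $\delta_{j_0}=-1$ then $W=0$ on the positive‑probability event $\{Z_{j_0}\ge1\}$, while if every $\delta_j>-1$ then $\log W=\sum_j\big(Z_j\log(1+\delta_j)-\lambda_j\delta_j\big)$ is an almost surely convergent sum of independent summands by Kolmogorov's two‑series theorem (using $\sum_j\lambda_j(\log(1+\delta_j))^2<\infty$, a consequence of (A3)), so $W\in(0,\infty)$ almost surely.

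Next, write $\mathcal F_{m,n}=\sigma(X_{1,n},\dots,X_{m,n})$ and $g_{m,n}=\E(Y_n\mid\mathcal F_{m,n})$. Condition (A2) says exactly that the function $\boldsymbol{x}\mapsto \E(Y_n\mid X_{1,n}=x_1,\dots,X_{m,n}=x_m)/\E Y_n$ converges pointwise on $\mathbb Z_{\ge0}^{\,m}$ to $\boldsymbol{x}\mapsto\prod_{j=1}^m(1+\delta_j)^{x_j}e^{-\lambda_j\delta_j}$; combined with the joint convergence $(X_{1,n},\dots,X_{m,n})\overset{d}{\to}(Z_1,\dots,Z_m)$ from (A1) in the discrete space $\mathbb Z_{\ge0}^{\,m}$, a dominated‑convergence argument gives $g_{m,n}/\E Y_n\overset{d}{\to}W_m$ as $n\to\infty$ for each fixed $m$. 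The crucial point is to upgrade this to a second‑moment bound: applying the portmanteau inequality to the bounded continuous functions $t\mapsto\min(t,K)^2$ and then letting $K\to\infty$ yields
\[
\liminf_{n\to\infty}\frac{\E g_{m,n}^2}{(\E Y_n)^2}\ \ge\ \E W_m^2\ =\ \exp\!\Big(\sum_{j=1}^m\lambda_j\delta_j^2\Big).
\]
Since $g_{m,n}=\E(Y_n\mid\mathcal F_{m,n})$ gives $\E(Y_n-g_{m,n})^2=\E Y_n^2-\E g_{m,n}^2$, combining the last display with (A4) shows
\[
\limsup_{n\to\infty}\frac{\E(Y_n-g_{m,n})^2}{(\E Y_n)^2}\ \le\ \exp\!\Big(\sum_{j\ge1}\lambda_j\delta_j^2\Big)-\exp\!\Big(\sum_{j=1}^m\lambda_j\delta_j^2\Big)\ \longrightarrow\ 0\quad\text{as }m\to\infty,
\]
the limit using (A3). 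This is the step I expect to be the main obstacle: it is where all four hypotheses are forced to interact, and one has to be careful because $g_{m,n}/\E Y_n$ is not assumed uniformly bounded, so the truncation‑and‑Fatou device above (rather than a direct continuous‑mapping argument) is needed to transfer the weak limit into a moment inequality.

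Finally I would assemble the pieces by a three‑$\varepsilon$ argument. For any bounded Lipschitz $\psi$,
\[
\big|\E\psi(Y_n/\E Y_n)-\E\psi(W)\big|\ \le\ \|\psi\|_{\mathrm{Lip}}\Big(\tfrac{\E(Y_n-g_{m,n})^2}{(\E Y_n)^2}\Big)^{1/2}+\big|\E\psi(g_{m,n}/\E Y_n)-\E\psi(W_m)\big|+\big|\E\psi(W_m)-\E\psi(W)\big|,
\]
where the last term is controlled by $W_m\to W$ in $L^2$; choosing $m$ large (using also the previous paragraph) and then $n$ large (using $g_{m,n}/\E Y_n\overset{d}{\to}W_m$) gives $Y_n/\E Y_n\overset{d}{\to}W$. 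For the joint convergence with (A1), the same estimate applied to test functions that depend also on finitely many coordinates $X_{1,n},\dots,X_{r,n}$ works, because for $m\ge r$ the pair $\big(g_{m,n}/\E Y_n,\ X_{1,n},\dots,X_{r,n}\big)$ converges jointly to $\big(W_m,Z_1,\dots,Z_r\big)$ and $W_m\to W$ as $m\to\infty$; the ``in particular'' about positivity of $W$ is the dichotomy already disposed of in the first paragraph.
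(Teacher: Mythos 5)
The paper does not prove this theorem; it cites it verbatim from Janson as the engine driving Theorems~\ref{thm:main} and~\ref{thm:main2}, so there is no in-paper proof to compare against. Your reconstruction tracks Janson's original argument faithfully: the $L^2$-bounded nonnegative martingale $W_m$ with $\E W_m=1$ and $\E W_m^2=\prod_{j\le m}e^{\lambda_j\delta_j^2}$; the conditional expectations $g_{m,n}=\E(Y_n\mid X_{1,n},\dots,X_{m,n})$ converging in distribution to $W_m$ via (A1)+(A2); the orthogonality $\E(Y_n-g_{m,n})^2=\E Y_n^2-\E g_{m,n}^2$; the truncated-portmanteau lower bound on $\liminf_n\E g_{m,n}^2/(\E Y_n)^2$; the combination with (A3)+(A4) to kill $\E(Y_n-g_{m,n})^2/(\E Y_n)^2$ uniformly in $n$ as $m\to\infty$; and the three-$\varepsilon$ closing argument. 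That is exactly the structure of the cited proof, and all of those steps are sound.

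There is one genuine gap, in your disposal of the dichotomy. You assert that $\sum_j\lambda_j\big(\log(1+\delta_j)\big)^2<\infty$ is a consequence of (A3). It is not: (A3) controls $\delta_j^2$, not $\big(\log(1+\delta_j)\big)^2$, and the latter can be arbitrarily larger when $\delta_j$ is close to $-1$. For instance $\lambda_j=4^{-j}$, $\delta_j=-1+e^{-2^j}$ gives $\sum_j\lambda_j\delta_j^2<\infty$ while $\lambda_j\big(\log(1+\delta_j)\big)^2\equiv 1$. Nothing in the hypotheses rules this out (only $\lambda_j>0$ and $\delta_j\ge -1$ are assumed), so the two-series invocation is not justified as written. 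The fix is simpler and stays inside what you already proved: when every $\delta_j>-1$, each $W_m>0$, so $\{W>0\}=\{W/W_m>0\}$ lies in $\sigma(Z_{m+1},Z_{m+2},\dots)$ for every $m$ and is therefore a tail event of the independent sequence $(Z_j)$; Kolmogorov's $0$--$1$ law gives $\Pr(W>0)\in\{0,1\}$, and since $W_m\to W$ in $L^2$ forces $\E W=1$, we must have $\Pr(W>0)=1$. Everything else in your write-up stands.
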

	
Janson remarks in \cite{janson} that the index set $\mathbb{Z}^+$ may be replaced by any other countably infinite set. We mention this since all of our asymptotics
are performed as $n\to\infty$ along the countably infinite set $\mathcal{N}_{d,k}$.

	In order to verify condition (A2), the following lemma is helpful.

	\begin{lemma}[\protect{Janson \cite[Lemma 1]{janson}}]
	Let $\lambda_j' \ge 0$, $j = 1, 2, \dots$ be constants. Suppose that \emph{(A1)} holds, that $Y_n \ge 0$ and that
	\[
	\text{\emph{(A2$'$)}} \qquad \frac{\E(Y_n (X_{1,n})_{x_1} \cdots (X_{m,n})_{x_m})}{\E Y_n}  \to \prod_{j=1}^m (\lambda_j' )^{x_j} \quad \text{ as } n \to \infty,
	\]
	for every finite sequence $x_1, \dots, x_m$ of nonnegative integers. Then condition \emph{(A2)} holds with $\delta_j = \frac{\lambda_j'}{\lambda_j}-1$.
	\end{lemma}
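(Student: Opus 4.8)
The plan is to recast condition (A2) as a statement about a tilted probability measure and then apply the method of moments, with hypothesis (A2$'$) supplying exactly the factorial moments required. For each large $n$ (so that $\E Y_n\neq 0$) let $\mathbb Q_n$ be the probability measure on the underlying space under which the expectation of a random variable $W$ equals $\E(W Y_n)/\E Y_n$; this is well defined since $Y_n\ge 0$ and $\E Y_n>0$. For any event $\mathcal E$ we have $\mathbb Q_n(\mathcal E)=\E(Y_n\mathbbm 1_{\mathcal E})/\E Y_n$, so with $\mathcal E=\{X_{1,n}=x_1,\dots,X_{m,n}=x_m\}$,
\[
 \frac{\E(Y_n\mid X_{1,n}=x_1,\dots,X_{m,n}=x_m)}{\E Y_n}
 \;=\;\frac{\mathbb Q_n(\mathcal E)}{\Pr(\mathcal E)}.
\]
By the joint form of (A1), $\Pr(\mathcal E)\to\prod_{j=1}^m e^{-\lambda_j}\lambda_j^{x_j}/x_j!$. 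Since $\lambda_j'=\lambda_j(1+\delta_j)$, the target value $\prod_{j=1}^m(1+\delta_j)^{x_j}e^{-\lambda_j\delta_j}$ is exactly the ratio of $\prod_{j=1}^m e^{-\lambda_j'}(\lambda_j')^{x_j}/x_j!$ to $\prod_{j=1}^m e^{-\lambda_j}\lambda_j^{x_j}/x_j!$, so it suffices to show $\mathbb Q_n(\mathcal E)\to\prod_{j=1}^m e^{-\lambda_j'}(\lambda_j')^{x_j}/x_j!$.

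The key observation is that (A2$'$) states precisely that the joint falling-factorial moments $\E_{\mathbb Q_n}\big[(X_{1,n})_{x_1}\cdots(X_{m,n})_{x_m}\big]$ converge, for every $m$ and every $(x_1,\dots,x_m)$, to $\prod_j(\lambda_j')^{x_j}$ --- and these are the joint falling-factorial moments of a vector $(Z_1',\dots,Z_m')$ of independent $\mathrm{Po}(\lambda_j')$ variables. Factorial moments and ordinary moments determine one another, a product of Poisson laws is determined by its moments, and hence by the method of moments $(X_{1,n},\dots,X_{m,n})$ converges in distribution under $\mathbb Q_n$ to $(Z_1',\dots,Z_m')$; as all variables are $\mathbb Z_{\ge 0}$-valued this gives $\mathbb Q_n(\mathcal E)\to\prod_j e^{-\lambda_j'}(\lambda_j')^{x_j}/x_j!$, completing the argument. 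One can also carry out the passage from factorial moments to point probabilities explicitly, using the inclusion--exclusion identity $\mathbbm 1[X=x]=\sum_{i\ge x}(-1)^{i-x}\binom{i}{x}\binom{X}{i}$ coordinatewise, averaging against $Y_n/\E Y_n$, applying (A2$'$) term by term, and summing the resulting alternating series via $\sum_{i\ge x}(-1)^{i-x}\binom{i}{x}(\lambda')^i/i!=e^{-\lambda'}(\lambda')^x/x!$.

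The one genuinely delicate step --- the part I expect to be the main obstacle --- is justifying the interchange of the limit $n\to\infty$ with the infinite summation over $i$, since (A2$'$) only gives control of $\E(Y_n(X_{1,n})_{i_1}\cdots(X_{m,n})_{i_m})/\E Y_n$ separately for each fixed multi-index, not a bound summable in $i$ uniformly in $n$. I would handle this by truncating the alternating series at an even, respectively odd, level: the two-sided Bonferroni inequalities for $\mathbbm 1[X=x]$ hold pointwise, hence are preserved after multiplying by $Y_n\ge 0$ and letting $n\to\infty$, and then letting the truncation level tend to infinity squeezes the limit to the required value (the multi-coordinate case following by induction on $m$). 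Equivalently one simply invokes the moment-convergence theorem for the moment-determinate product-Poisson limit. Minor bookkeeping points are that (A1) must be used in its joint form, which it provides, and that the degenerate case $\lambda_j'=0$ (that is, $\delta_j=-1$) is covered by the convention $0^0=1$.
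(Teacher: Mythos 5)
The paper cites this lemma from Janson's 1995 paper without reproducing its proof, so there is no in-paper argument to compare against directly; I can only assess your proof on its own merits, and compare it to Janson's original. Your argument is correct. The tilted-measure reformulation $\mathbb Q_n(\mathcal E)=\E(Y_n\mathbbm{1}_{\mathcal E})/\E Y_n$ is exactly the right device: it turns condition (A2) into the statement $\mathbb Q_n(\mathcal E)/\Pr(\mathcal E)\to\prod_j(1+\delta_j)^{x_j}e^{-\lambda_j\delta_j}$, where (A1) gives the denominator limit and what remains is to show that the $(X_{j,n})$ converge jointly in distribution to independent $\operatorname{Po}(\lambda_j')$ under $\mathbb Q_n$. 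Hypothesis (A2$'$) supplies precisely the required joint falling-factorial moments. You have also correctly flagged the one point that needs care, namely the interchange of the $n\to\infty$ limit with the infinite inclusion--exclusion sum, and your resolution via truncated alternating (Bonferroni-type) bounds, passing to $\limsup$/$\liminf$ and then letting the truncation level go to infinity, is the standard and sound way to close that gap; the multivariate bookkeeping can indeed be handled either by a product version of the sieve or by induction on $m$. This is, in substance, the same route Janson's original proof takes.
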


The small subgraph conditioning method has been applied to prove several
structural results about random regular graphs. See for example the results surveyed in~\cite{janson,Wormald-survey}.  For those new to the method, a good place to start may be the short and
accessible work of Pra{\l}at and Wormald~\cite{pralat-wormald}, who used small subgraph conditioning
to prove that almost all 5-regular graphs contain a 3-flow.  
Intuition behind the method, focussing in particular on why conditioning on 
numbers of short cycles can help to control the variance, is provided in~\cite[Section~2]{DP}.

\subsection{Applying the method to our problem}\label{s:applying}

We now define the random variables $Y, X_1,X_2,\ldots $ that we will use in our
application of the small subgraph conditioning method. (We suppress the
dependence on $n$, for ease of notation.)
Recall that when $k > d/2$, a $k$-star decomposition can be viewed as an 
orientation of the edges of the graph such that every vertex has in-degree 0 or $k$.
We say a vertex is a \emph{centre} if it has in-degree $k$,
otherwise it is a \emph{leaf}.   
Let $Y=Y_n$ be the number of ways to orient all pairs in a uniformly random
pairing from $\Omega_{n,d}$, such that every cell has in-degree $k$ or 0.
This corresponds to an edge-disjoint decomposition of the
(multigraph corresponding to the) pairing into copies  of $k$-stars.
For $j\geq 1$ let $X_j=X_{j,n}$ be the number of $j$-cycles in a uniformly random
pairing from $\Omega_{n,d}$.
Bollob{\' a}s~\cite{bollobas1980probabilistic} 
proved that $X_j \to Z_j$ as $n \to \infty$, where $Z_j$ are asymptotically independent Poisson random variables with mean
	\begin{equation} \label{eq:lambda}
	\lambda_j = \frac{(d-1)^j}{2j}.
	\end{equation}
The remainder of the paper is devoted to proving Theorem~\ref{thm:main}
and Theorem~\ref{thm:main2}, using the small subgraph conditioning method
(Theorem~\ref{thm:subgraph})
applied to the random variables $Y$, $X_1, X_2, X_3,\ldots$, 
where $n$ tends to
infinity along the index set $\mathcal{N}_{d,k}$.

The first moment calculations are given at the start of 
Section~\ref{s:first-cycles}, and the effect of short cycles is investigated in 
Section~\ref{s:short-cycles}.  
The second moment calculations are presented in Section~\ref{s:second-moment}.  These are the most technical calculations in the paper, and rely on a certain function having
a unique maximum in the interior of a given domain: this fact is proved
in Section~\ref{s:maximisation}.

Successful application of the small subgraph conditioning method will
allow us to conclude that a.a.s.\ $Y>0$.
Now let $Y_{\mathcal{G}} = Y_{\mathcal{G},d,k}$ be the number of
$k$-star decompositions of $\mathcal{G}_{n,d}$.  
Applying (\ref{negative-useful}) to $Y$ and $Y_{\mathcal{G}}$
we conclude that a.a.s.\ $Y_{\mathcal{G}}>0$.

\section{First moment and the effect of short cycles}\label{s:first-cycles}

With notation as above,
\begin{equation}
	\label{first-moment}
	\E Y =  \binom{n}{\frac{dn}{2k}}\, \binom{d}{k}^{dn/(2k)}\, (dn/2)!\,\, \frac{1}{M(dn)}.
\end{equation}
Here the first factor chooses the centres, the next factor identifies
the in-points in each centre, the third factor fixes a pairing which
pairs each in-point with an out-point, and the final factor is the 
probability that we observe this particular pairing.  

Applying Stirling's formula, we find that
\begin{equation}
	\label{expectation}
	\E Y  \sim \frac{k}{\sqrt{2k-d}}\, \left(\frac{\binom{d}{k}\, k^{2k/d}}{2^{k(d-2)/d}\, d\, (2k-d)^{(2k-d)/d}}
	\right)^{dn/(2k)}.
\end{equation}
Next we prove the following result, which will be needed in Section~\ref{s:second-moment}.

\begin{lemma}\label{L:k-star}
	Suppose that $d\geq 4$ and $d/2 < k \leq \kind{d}$. 
	If $(d,k)\neq (5,4)$, then 
	\begin{equation}\label{orientation-expectation-threshold}
	\frac{\binom{d}{k}\, k^{2k/d}}{2^{k(d-2)/d}\, d\, (2k-d)^{(2k-d)/d}}
	>1\end{equation}
	and hence $\E Y \rightarrow\infty$.  If $(d,k)=(5,4)$, then $\E Y\rightarrow 0$.
\end{lemma}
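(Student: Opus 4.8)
The plan is to reduce the claimed inequality~\eqref{orientation-expectation-threshold} to the already-established condition~\eqref{eq:k+} that defines $\kind{d}$, using the relationship between $\E Y$ (from~\eqref{expectation}) and $\E Z$ (from Section~\ref{sec:ind-negative}). First I would observe that the base of the exponential in~\eqref{expectation} and the base of the exponential in the formula for $\E Z$ are algebraically related: both are expressions in $d,k$ raised to the power $dn/(2k)$, and a direct comparison shows that~\eqref{orientation-expectation-threshold} holds if and only if
\[
 \binom{d}{k}^{d}\, k^{2k}\, (d-k)^{d(d-k)} > 2^{k(d-2)}\, d^{d}\, (2k-d)^{2k-d}\cdot\text{(the base of $\E Z$)}^{\,d},
\]
so the two expectation thresholds are governed by essentially the same polynomial inequality once one clears the $\binom{d}{k}$ factor. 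Concretely, since $\E Z\to 0$ exactly when $k>\kind{d}$ (shown in Section~\ref{sec:ind-negative}), and since~\eqref{connection-leaves} forces $\E Y \le \E Z$ up to polynomial factors — an orientation with all in-degrees $0$ or $k$ determines an independent set of leaves of the right size — one direction is almost immediate: if $k \le \kind{d}$ the obstruction to $\E Y$ growing can only come from the $\binom{d}{k}$ discrepancy, which I would bound below using standard binomial estimates.

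More carefully, I would take logarithms of the left side of~\eqref{orientation-expectation-threshold} and expand $\log\binom{d}{k}$ via Stirling, reducing the claim to a clean inequality $\phi(d,k) > 0$ for a function $\phi$ built from $d\log d$, $k\log k$, $(d-k)\log(d-k)$, $(2k-d)\log(2k-d)$ and a linear term. The condition~\eqref{eq:k+} defining $\kind{d}$, after taking logs, is another such inequality $\psi(d,k)>0$; the bulk of the work is checking that $\psi(d,k)\ge 0$ implies $\phi(d,k)>0$ in the relevant range $d/2 < k \le \kind{d}$, $d\ge 4$, with the single exception $(d,k)=(5,4)$. I expect this comparison to split into (a) a ``bulk'' argument valid for all sufficiently large $d$, where the gap between $\phi$ and $\psi$ is controlled by a monotone estimate (likely $\phi - \psi \ge$ some positive quantity once $d$ exceeds a small explicit bound such as $d\ge 20$ or so), and (b) a finite check of the remaining small pairs $(d,k)$ with $4\le d<d_0$ and $d/2<k\le\kind{d}$, done by direct numerical evaluation of~\eqref{orientation-expectation-threshold}. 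The exceptional pair $(5,4)$ is handled separately: here I would simply evaluate the base in~\eqref{expectation}, namely $\binom{5}{4}\cdot 4^{8/5}/(2^{4\cdot 3/5}\cdot 5\cdot 3^{3/5})$, and verify it is strictly less than $1$, so that $\E Y\to 0$; by Markov's inequality this also yields the $(d,k)=(5,4)$ half of Theorem~\ref{thm:independence}, which is why Lemma~\ref{L:k-star} is invoked there.

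The main obstacle I anticipate is the ``bulk'' comparison in step (a): the functions $\phi$ and $\psi$ both vanish near $k=d/2+\log d$ (the expectation threshold), so near the top of the range they are small and of the same order, and one cannot afford a crude bound. The key trick will be to write $\phi - \psi$ exactly — most of the transcendental terms should cancel since both inequalities encode the same first-moment computation up to the $\binom{d}{k}$ factor and a power of $2$ — so that what remains is $\tfrac1d\log\binom{d}{k}^{d}$ against an explicit elementary expression, reducing everything to a one-variable estimate after substituting $k = d/2 + t$ with $1\le t\le \tfrac16\log d$ (or the slightly larger range up to $\kind{d}$). I would then verify positivity of this one-variable function by elementary calculus (monotonicity in $t$, or a second-order Taylor estimate around $t$ small), falling back to the finite numerical check for the handful of pairs where the asymptotic estimate is not yet tight. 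The conclusion $\E Y\to\infty$ then follows from~\eqref{expectation} since the base exceeds $1$ and the exponent $dn/(2k)\to\infty$.
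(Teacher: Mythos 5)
Your strategy is essentially the paper's: multiply \eqref{orientation-expectation-threshold} against the inequality \eqref{eq:k+} so that the $(2k-d)^{(2k-d)/d}$ and other transcendental factors cancel, reducing to a cleaner inequality involving $\binom{d}{k}$; then do a finite numerical check for small $d$, a general argument for large $d$, and treat $(5,4)$ by direct evaluation. Two points of divergence are worth flagging. First, the paper's post-cancellation inequality is \eqref{eq:dagger}, namely $\binom{d}{k}\, k^k\, (d-k)^{d-k}/2^k > (d/2)^d$, whose crucial feature is that the right-hand side is free of $k$. The paper exploits this by showing the left side is monotone decreasing in $k$ for $k\le d-2$, verifying the inequality at the single value $k=d-3$, and showing $\kind{d}<2d/3\le d-3$ for $d\ge 10$; these three observations settle the bulk case with no Taylor expansion in $t=k-d/2$. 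Your proposed calculus-in-$t$ approach could likely be made to work, but you would still need an a priori bound on $\kind{d}$ (something like $\kind{d}<2d/3$) to control the range of $t$, and the monotonicity-plus-endpoint trick is considerably cleaner, so I would urge you to look for a $k$-independent side before expanding. Second, your parenthetical remark that $\E Y \lesssim \E Z$ is both dubious (what \eqref{connection-leaves} gives is the event inclusion $\{Y>0\}\subseteq\{Z>0\}$, not a pointwise comparison $Y\le Z$, and a single graph can have many decompositions per qualifying independent set) and irrelevant to this lemma, which requires $\E Y$ bounded \emph{below}; the argument must proceed through the explicit asymptotic formula \eqref{expectation}, as your main line of reasoning eventually does. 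With the $\E Y\lesssim\E Z$ digression dropped and the $\kind{d}<2d/3$ bound supplied, your plan matches the paper's proof.
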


\begin{proof}
	Recall the asymptotic formula for $\E Z_\alpha$ from (\ref{eq:EZ}),
	where $Z_\alpha$ is the number of independent sets of 
	size $\alpha n$ in $\Omega_{n,d}$.
For $0\leq \alpha < 1/2$, let
	\[ h_d(\alpha) = (d-1)(1-\alpha)\log(1-\alpha) - \alpha \log\alpha - \frac{d}{2}\, (1-2\alpha)\log(1-2\alpha)
\]
be the logarithm of the exponential part of the asymptotic formula for $\E Z_\alpha$.
	As noted by Ding, Sly and Sun~\cite[Lemma~2.1]{DSS}, $h_d(\alpha)$ is concave on $[0,1/2)$, and $h_d'(\alpha)>0$ for small values of $\alpha$. Since $h_d(0)=0$ and $h_d(1/2)<0$, this implies that there is a unique value of $\alpha\in (0,1/2)$ such that $h_d(\alpha)=0$.  The inequality (\ref{eq:k+}) holds if and only if
	$h_d(\alpha^*) > 0$ holds for $\alpha^* = (2k-d)/(2k)$.
	Since $(2k-d)/(2k) = 1-d/(2k)$ is an increasing function of $k$, it follows that 
\begin{equation}
\label{eq:monotonicity}
\text{(\ref{eq:k+}) holds for all }\, d/2 <k\leq \kind{d}.
\end{equation}

Since (\ref{eq:monotonicity}) holds, to establish (\ref{orientation-expectation-threshold}) it is sufficient to prove that
\begin{align*}
1 < \frac{\binom{d}{k}\, k^{2k/d}}{2^{k(d-2)/d}\, d}\, \cdot
  \frac{2^{(d^2-2k)/d}\, k^{k(d-2)/d}\, (d-k)^{d-k}}{d^{d-1}} 
   = \frac{\binom{d}{k}\, k^k\, (d-k)^{d-k}\, 2^{d-k}}{d^d}.
  \end{align*}
Rewrite this inequality as
\begin{equation}
\label{eq:dagger}
\frac{\binom{d}{k}\, k^k\, (d-k)^{d-k}}{2^k} > \left(\frac{d}{2}\right)^d.
\end{equation}
We start by dealing with the cases $d\in \{4,5,6,7,8,9\}$ computationally.
Let $c(d,k)$ denote the left hand side of (\ref{orientation-expectation-threshold}). 
Recall that $\kind{4}=3$, $\kind{5} = 4$, $\kind{6} = 4$, $\kind{7}=5$, $\kind{8}=5$,
$\kind{9}=6$.
Computing $c(d,k)$
for relevant values to 3 decimal figures gives:
\begin{align*}
c(4,3) &= 1.299, \quad c(5,3) = 2.146,\quad c(5,4) = 0.901,\quad c(6,4) = 1.984,\\
 c(7,4) &= 3.365, \quad c(7,5) = 1.571, \quad c(8,5) = 3.271,\quad
  c(9,5) = 5.651,\quad c(9,6) = 2.778. 
 \end{align*}
This shows that (\ref{orientation-expectation-threshold}) holds
when $d\in \{4,5,6,7,8,9\}$ and $d/2 < k \leq \kind{d}$, except for the special case
$(d,k) = (5,4)$ where $c(5,4) < 1$ and hence $\E Y\rightarrow 0$.

For the remainder of the proof, suppose that $d\geq 10$.
We claim that the left hand side of (\ref{eq:dagger}) is monotonically decreasing
for $k\leq d-2$.  This is true if
	\[ \frac{\binom{d}{k+1}\, (k+1)^{k+1}\, (d-k-1)^{d-k-1}}{\binom{d}{k}\,
	k^k\, (d-k)^{d-k}}  = \frac{(1 + 1/k)^k}{\big(1 + 1/(d-k-1)\big)^{d-k-1}}< 2,\]
	and the above inequality holds as
\[ \left(1 + \frac{1}{k}\right)^k \leq e < 4 \leq 2\left(1 + \frac{1}{d-k-1}\right)^{d-k-1}\]
when $d/2 < k \leq d-2$.  

Next, when $d\geq 10$ we have
$36\big(1 - 3/d\big)^d > 1$
and hence
\[ 27\, \binom{d}{3}\, (d-3)^{d-3} > 2^{d-3}\, \left(\frac{d}{2}\right)^d.\]
This implies that (\ref{eq:dagger}) holds at $k=d-3$ when $d\geq 10$.
In other words, (\ref{eq:k+}) implies
(\ref{orientation-expectation-threshold}) when $k=d-3$ and $d \geq 10$.
By monotonicity of the left hand side of (\ref{eq:dagger}), proved above,
it follows that (\ref{eq:k+}) implies
(\ref{orientation-expectation-threshold}) for all $d\geq 10$ and $k$
such that $d/2 < k \leq d-3$.

Next,  we show that (\ref{eq:k+})
fails when $k=2d/3$, and hence $\kind{d} < 2d/3$.
Note that $2d/3 \leq d-3$ as $d\geq 10$.
After substitution and cancellation, we see that 
(\ref{eq:k+}) fails when $k=2d/3$ if and only if
\[ \left(\frac{3}{2^{5/3}}\right)^d \leq d,\]
and since $3 < 2^{5/3}$,  this inequality holds for all $d\geq 1$.

Putting this together, we see that if $d\geq 10$ and $k\leq \kind{d}$,
then (\ref{eq:k+}) holds, and hence (\ref{orientation-expectation-threshold}) 
holds. This completes the proof.
\end{proof}

\subsection{Effect of short cycles} \label{s:short-cycles}

A subpairing is a set of at most $dn/2$ pairs of points.
If the corresponding multigraph $G(P)$ is a cycle of length $j$, 
then we say that the subpairing $P$ is a cycle of length $j$.

Let $\xvec = (x_1,\ldots, x_m)$ be a sequence of nonnegative integers,
for some fixed $m\geq 1$.  Define $J = \sum_{j=1}^m j x_j$ and assume
that $J > 0$.  
Write $r=\sum_{j=1}^m x_j$. 

Let $\mathcal{S}(\xvec)$ be the set of sequences $(P_1,\ldots, P_r)$ 
of subpairings such that
$G(P_1),\ldots$, $G(P_r)$ are distinct cycles so that the first $x_1$ have length 1, the
next $x_2$ have length 2, etc, and the last $x_m$ cycles have length $m$.
Next, let $\mathcal{S}^\ast(\xvec)$ be the set of all sequences $(P_1,\ldots, P_r)$ in
$\mathcal{S}(\xvec)$ so that the cycles $G(P_1),\ldots, G(P_r)$ are vertex-disjoint.
Then
\begin{equation}
\label{eq:cycle-sum}
 \frac{\E( Y(X_1)_{x_1}\cdots (X_m)_{x_m} )}{\E Y} =
 \frac{1}{\E Y}\, 
   \sum_{(P_1,\ldots, P_r)\in\mathcal{S}(\xvec)}\, 
    \sum_{P\supset P_1\cup \cdots \cup P_r} \, \frac{Y(P)}{M(dn)}
\end{equation}
where the second sum is over all pairings $P\in\Omega_{n,d}$ which contain $P_1\cup \cdots \cup P_r$.
First we perform the summation over $(P_1,\ldots, P_r)\in\mathcal{S}^\ast(\xvec)$,
where all cycles are vertex-disjoint.  Later we prove that the full sum
is asymptotically equal to this restricted sum.
For all positive integers $j$, define
\begin{equation}
\label{eq:delta}
   \delta_j = \left(\frac{d-2k+1}{d-1}\right)^j.
\end{equation}
Observe that $\delta_j > -1$ for all $j\geq 1$ whenever $k<d$.

\medskip

\begin{lemma}
\label{lem:disjoint-cycles}
Let $\xvec = (x_1,\ldots, x_m)$ be a sequence of nonnegative integers,
for some fixed $m\geq 1$, and define $\Sigma^\ast(\xvec)$ by
\[ 
\Sigma^\ast(\xvec) = \sum_{(P_1,\ldots, P_r)\in\mathcal{S}^\ast(\xvec)}
    \sum_{P\supset P_1\cup \cdots \cup P_r}\,  \frac{Y(P)}{M(dn)}.\]
Then
\[ \frac{\Sigma^\ast(\xvec)}{\E Y} 
   \sim  \prod_{j=1}^m \big( \lambda_j (1+\delta_j)\big)^{x_j}.
\]
\end{lemma}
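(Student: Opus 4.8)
The plan is to compute the left-hand side asymptotically by a direct enumeration in the configuration model, exploiting the fact that the cycles $G(P_1),\ldots,G(P_r)$ are vertex-disjoint (so their contributions essentially decouple) and that they occupy only $O(1)$ cells. First I would fix one sequence $(P_1,\ldots,P_r)\in\mathcal{S}^\ast(\xvec)$ and count it: a cycle of length $j$ uses $j$ distinct cells and $2j$ points, and the number of ways to choose the cells, the points within them, and the cyclic pattern of pairs is $\sim (dn)^j \cdot \tfrac{(d-1)^j}{2j}$ up to lower-order factors (this is exactly the computation behind $\lambda_j=(d-1)^j/(2j)$ in \eqref{eq:lambda}); summing over all of $\mathcal{S}^\ast(\xvec)$ contributes a factor $\prod_j \big(\tfrac{(dn)^j (d-1)^j}{2j}\big)^{x_j}/\prod_j x_j!$ but the $x_j!$ and an overcounting factor will cancel against the combinatorial structure, leaving the clean product. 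The substantive part is the inner sum $\sum_{P\supset P_1\cup\cdots\cup P_r} Y(P)/M(dn)$, which I would rewrite as (probability that a random pairing contains the fixed cycles) times (conditional expectation of $Y$ given that those pairs are present).

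Next I would analyze that conditional expectation. Conditioning on the presence of the vertex-disjoint cycles removes $J=\sum j x_j$ pairs and $2J$ points; what remains is a random pairing on the leftover points, and $Y(P)$ restricted to this is the number of orientations of the leftover pairs together with the (already fixed) cycle pairs so that every cell has in-degree $0$ or $k$. The cells on the cycles are special: each such cell has $2$ of its $d$ points already paired inside a cycle, and those $2$ points get oriented within the cycle. So I would write $\E(Y \mid \text{cycles present})$ as a sum, over the orientation of each cycle (there are $2$ orientations of a cycle of length $j$, giving each cell on it net in-contribution $1$ or $-1$ from the cycle — but actually both cycle-edges at a vertex: one in, one out, so the cycle contributes in-degree exactly $1$ to each of its cells), of the number of ways to complete. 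Here the key local computation: a cell already carrying in-degree $1$ from the cycle can still be a centre (needs $k-1$ more in-points among its remaining $d-2$) or a leaf (but a leaf has in-degree $0$, contradiction unless... ) — I should be careful: if $k>d/2$ a cycle vertex has in-degree $\geq 1$ from the cycle so it cannot be a leaf, forcing it to be a centre; if the orientation of the cycle is such that the vertex has in-degree from the cycle in $\{0,1,2\}$ depending on the two edge directions. Tracking this, each cycle of length $j$ contributes a factor that, after dividing by the unconditioned $\E Y$ (which essentially asks each cell independently to be centre-or-leaf with the appropriate relative weight), collapses to $\lambda_j(1+\delta_j)$ with $1+\delta_j = \big(\tfrac{d-2k+1}{d-1}\big)^j$ as in \eqref{eq:delta}; the ratio $\tfrac{d-2k+1}{d-1}$ is exactly the bias introduced per cycle-edge by the centre/leaf constraint.

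The cleanest way to organize this is to use the known first-moment asymptotics \eqref{first-moment} and \eqref{expectation}: write $\E(Y(X_1)_{x_1}\cdots(X_m)_{x_m})$ as a sum over $(P_1,\ldots,P_r)\in\mathcal{S}^\ast(\xvec)$ of a quantity that, by the same binomial/pairing bookkeeping as in \eqref{first-moment} applied to the reduced configuration on $dn-2J$ points and $n$ cells (with $2J$ points pre-committed), equals $\E Y$ times an explicit local correction factor per cycle. The bulk factors $\binom{n}{dn/(2k)}$, $\binom{d}{k}^{dn/(2k)}$, $(dn/2)!/M(dn)$ from the reduced model differ from those of the full model only by factors that are $1+O(1/n)$ for the vertex-set and $k$-set choices (changing $n$ to $n$, but $dn$ to $dn-2J$ shifts the factorials by a bounded amount), and these lower-order discrepancies wash out in the ratio; what survives is precisely $\prod_j(\lambda_j(1+\delta_j))^{x_j}$ after the enumeration of $\mathcal{S}^\ast(\xvec)$ supplies the $(dn)^j(d-1)^j/(2j)$ per cycle and the number of cells remains $n$.

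I expect the main obstacle to be the careful bookkeeping of the local correction factor at the cells lying on the cycles: one must correctly account for how the two pre-assigned cycle-edges at such a cell constrain its possible roles (centre with in-degree $k$, versus leaf with in-degree $0$), sum over the $2^{x_j}$-ish orientations of each cycle, and check that the resulting per-cycle factor is exactly $(d-1)^j/(2j)\cdot\big(\tfrac{d-2k+1}{d-1}\big)^j$ and not merely proportional to it. A secondary technical point is justifying that passing from the full model's bulk factors \eqref{first-moment} to those of the model on $dn-2J$ points introduces only a multiplicative $1+o(1)$; this is a standard "$J$ is a constant, $n\to\infty$" estimate using Stirling, but it should be stated. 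Throughout, the vertex-disjointness of the cycles (the defining property of $\mathcal{S}^\ast(\xvec)$) is what makes the per-cycle factors multiply cleanly with no interaction terms; the extension to the full sum over $\mathcal{S}(\xvec)$ — showing the non-vertex-disjoint sequences contribute a negligible fraction — is handled separately (after this lemma, as the excerpt indicates).
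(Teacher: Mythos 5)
Your plan mirrors the paper's proof at a structural level: enumerate the vertex-disjoint cycles and their orientations, count the ways to extend to an admissible orientation of the whole pairing, and divide by $\E Y$. You correctly identify the decoupling coming from vertex-disjointness, the alternating source/sink structure around each cycle, and the fact that a cycle vertex with positive in-degree from the cycle is forced to be a centre (since $k>d/2$) while a source may become a centre or a leaf. However, the calculation you flag as ``the main obstacle'' is precisely the substantive content of the lemma, and your proposal stops there rather than carrying it out — you simply assert the answer $\lambda_j(1+\delta_j)$.

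Two concrete issues. First, the pairing orientation gives each $j$-cycle $2^{j}$ possible edge-orientations, not $2$; your parenthetical ``one in, one out, so in-degree exactly $1$ to each cell'' describes only the two cyclic orientations and is wrong as a general claim (you partially self-correct later, but this needs to be set up properly from the start). The paper parametrizes a cycle $C_i$ by $p_i$ = number of sources $=$ number of sinks, then by $\ell_i$ = number of sources that become leaves (the non-sources are all forced centres), leading to local in-point counts $\binom{d-2}{k}$ at a source-centre, $\binom{d-2}{k-1}$ at an intermediate vertex, $\binom{d-2}{k-2}$ at a sink, together with the $\binom{n-J}{dn/(2k)-J+L}$ and $\binom{d}{k}$ factors off the cycles. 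Summing over $\ell_i$, then over $p_i$, and dividing by $\E Y$ gives
\[
\frac{(d-k)^{j_i}}{j_i}\sum_{p_i=0}^{\lfloor j_i/2\rfloor}\binom{j_i}{2p_i}\Big(\frac{k-1}{d-k}\Big)^{2p_i}
= \frac{(d-1)^{j_i}}{2j_i}\left(1+\Big(\frac{d-2k+1}{d-1}\Big)^{j_i}\right)
\]
via the even-part-of-a-binomial identity $\tfrac12\big(F(x)+F(-x)\big)$. None of this algebra appears in your proposal, so as written the proof is incomplete. Second, a notational slip: you wrote $1+\delta_j=\big(\tfrac{d-2k+1}{d-1}\big)^j$, but \eqref{eq:delta} sets $\delta_j$ (not $1+\delta_j$) equal to that power; the correction factor $1+\delta_j$ is a sum of two terms, arising from the two eigenvalue contributions to the cycle count, and is not a pure power of a per-edge ratio.
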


\begin{proof}
We will choose distinct vertex-disjoint cycles $C_1,\ldots, C_r$ so that
$C_i$ has length $j_i$, where $(j_1,\ldots, j_r)$ is the vector 
\[ \big( \underbrace{1,1,\ldots, 1}_{x_1}, \underbrace{2,2,\ldots, 2}_{x_2},\ldots, 
    \underbrace{m,m\ldots, m}_{x_m} \big)\]
of lengths.  Note that $\sum_{i=1}^r j_i = J$.
There are
\[ \frac{(n)_J}{2^r\, \prod_{i=1}^r  j_i}\] 
ways to choose these cycles.  Next, we choose an orientation of the edges
of cycles $C_1,\ldots, C_r$ in
\[  2^r \prod_{i=1}^r \sum_{p_i=0}^{\lfloor j_i/2\rfloor} \binom{j_i}{2p_i}\]
ways, where $p_i$ is the number of sources on $C_i$, which equals the number of
sinks on $C_i$.   This follows as sources and sinks must alternate around $C_i$,
so having chosen a subset of $2p_i$ vertices of $C_i$, there are 2 possible 
orientations of $C_i$ which make these $2p_i$ vertices the sources/sinks.
Then there are $( d(d-1) )^J$ ways to assign points
to the pairs corresponding to the $J$ edges in $C_1\cup\cdots \cup C_J$.
Given the above orientation, we already know which point is the out-point of each pair
of these $J$ pairs.

Next, we will choose which vertices are going to be centres and which
are going to be leaves.  
Suppose that $\ell_i$ sources on $C_i$ are leaves, so that $p_i-\ell_i$
sources on $C_i$ are centres.  The $j_i-p_i$ non-sources on $C_i$
must all be centres, as they have positive in-degree.  Therefore there are 
$j_i-\ell_i$ centres on $C_i$, giving $J-L$ centres in $C_1\cup\cdots\cup C_r$,
where $L = \sum_{i=1}^r \ell_i$.  There are
\[ \prod_{i=1}^r \binom{p_i}{\ell_i}\]
ways to identify all leaves, and hence all centres, on $C_1\cup\cdots\cup C_r$.

There are $\binom{n-J}{\frac{dn}{2k}-J+L}$ ways to choose the remaining centres,
from the $n-J$ vertices not in $C_1\cup \cdots\cup C_r$.   
Since $J$ and $L$ are bounded, we have
\begin{equation}
\binom{n-J}{\frac{dn}{2k}-J+L}\sim \bigg(\frac{d}{2k}\bigg)^{J-L}\, \bigg(\frac{2k-d}{2k}\bigg)^L\, \binom{n}{\frac{dn}{2k}}.
\label{eq:convenient}
\end{equation}
If $v$ is one of these new
centres, then there are $\binom{d}{k}$ ways to select the in-points for $v$.
If $v$ is one of the $p_i-\ell_i$ sources on $C_i$ which becomes a centre,
then there are $\binom{d-2}{k}$ ways to choose the in-points for $v$.
If $v$ is one of the $\ell_i$ sources on $C_i$ which become a leaf, then all
points of $v$ are out-points.
If a vertex $v$ of $C_i$ is neither a source nor a sink, then
there are $\binom{d-2}{k-1}$ ways to select the remaining in-points
for $v$.  Finally,  if $v$ is a sink on $C_i$, then there are
$\binom{d-2}{k-2}$ ways to select the remaining in-points for $v$. 
Now that all in-points are identified, there are $(dn/2-J)!$ ways to
complete the oriented pairing, matching out-points to in-points, since
the pairs in $C_1\cup \cdots\cup C_r$ are already chosen and oriented.    
Again we divide by $M(dn)$ for
the probability that we observe this pairing.

Using (\ref{eq:convenient}), this leads to the expression
\begin{align*}
& \Sigma^\ast(\xvec)\\ 
&\sim \frac{\big(d(d-1)\, n\big)^J}{\big(\prod_{i=1}^r j_i\big)}\, 
\frac{(dn/2-J)!}{M(dn)}\,
\binom{d}{k}^{dn/(2k)-J}
\binom{n}{\frac{dn}{2k}}\, \bigg(\frac{d}{2k}\bigg)^J\, 
\\ & \hspace*{2mm} {} \times
\prod_{i=1}^r\, 
\sum_{p_i=0}^{\lfloor j_i/2\rfloor} \binom{j_i}{2p_i}\,
\sum_{\ell_i=0}^{p_i}\, 
\bigg(\frac{2k-d}{2k} \, \binom{d}{k}\bigg)^{\ell_i}\, \
\binom{p_i}{\ell_i}\,
\binom{d-2}{k}^{p_i-\ell_i}\,
  \binom{d-2}{k-1}^{j_i-2p_i}\, \binom{d-2}{k-2}^{p_i}. 
\end{align*}
Next we divide by $\E(Y)$, using (\ref{first-moment}). Since $J$
is bounded, after much simplification this gives
\begin{align*}
 &\frac{\Sigma^\ast(\xvec)}{\E Y} \\
  &\sim  
\prod_{i=1}^r \, \frac{(d-k)^{j_i}}{j_i}\, 
\sum_{p_i=0}^{\lfloor j_i/2\rfloor}\,
   \binom{j_i}{2p_i}\,
  \left(\frac{(k-1)(d-k-1)}{k(d-k)}\right)^{p_i}\,
   \sum_{\ell_i=0}^{p_i} \binom{p_i}{\ell_i}\, \left( \frac{(2k-d)(d-1)}{(d-k)(d-k-1)}\right)^{\ell_i}\\
 &=  
  \prod_{i=1}^r\, 
\frac{(d-k)^{j_i}}{j_i}\, 
\sum_{p_i=0}^{\lfloor j_i/2\rfloor} \, \binom{j_i}{2p_i}\,
  \left(\frac{(k-1)(d-k-1)}{k(d-k)}\right)^{p_i}\, 
     \left(1 + \frac{(2k-d)d}{(d-k)(d-k-1)} \right)^{p_i}\\
 &=  \prod_{i=1}^r\, \frac{(d-k)^{j_i}}{j_i}\, \sum_{p_i=0}^{\lfloor j_i/2\rfloor} 
  \, \binom{j_i}{2p_i}
     \left( \frac{k-1}{d-k} \right)^{2p_i}.
\end{align*}
This shows that asymptotically, the effect of each (disjoint) cycle is independent.
The sum over only even values of $N$ in the
generating function $F(x) = \sum_{N} a_n x^N$ 
is given by $\nfrac{1}{2}\big(F(x) + F(-x)\big)$.
Using this, we conclude that 
\begin{align*}
\frac{\Sigma^\ast(\xvec)}{\E Y} & \sim 
   \prod_{i=1}^r\, \frac{(d-k)^{j_i}}{2{j_i}}\, \left(
    \left(1 + \frac{k-1}{d-k}\right)^{j_i} + \left(1 - \frac{k-1}{d-k}\right)^{j_i}
   \right)\\
  &= \prod_{i=1}^r\,
    \frac{(d-1)^{j_i}}{2j_i}\left(1 + \left(\frac{d-2k+1}{d-1}\right)^{j_i}\right).
\end{align*}
Since exactly $x_j$ values of $i$ satisfy $j_i=j$, for $i=1,\ldots, m$,
this completes the proof.
\end{proof}

It remains to prove that the sum over $\mathcal{S}^\ast(\xvec)$ dominates
the whole sum.

\begin{lemma}
Let $\xvec = (x_1,\ldots, x_m)$ be a sequence of nonnegative integers,
for some fixed $m\geq 1$.   Then, recalling \emph{(\ref{eq:lambda})} and 
\emph{(\ref{eq:delta})},
\[ 
 \frac{\E( Y(X_1)_{x_1}\cdots (X_m)_{x_m} ) }{\E Y}
   \sim  \prod_{j=1}^m \big( \lambda_j (1+\delta_j)\big)^{x_j}.
\]
\label{lem:all-cycles}
\end{lemma}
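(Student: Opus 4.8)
The plan is to deduce Lemma~\ref{lem:all-cycles} from Lemma~\ref{lem:disjoint-cycles} by showing that the contribution of sequences $(P_1,\ldots,P_r)\in\mathcal{S}(\xvec)\setminus\mathcal{S}^\ast(\xvec)$ — those in which the cycles $G(P_1),\ldots,G(P_r)$ share at least one vertex — is asymptotically negligible compared to $\E Y$. Since $\mathcal{S}(\xvec)=\mathcal{S}^\ast(\xvec)\,\dot\cup\,(\mathcal{S}(\xvec)\setminus\mathcal{S}^\ast(\xvec))$, combining this with Lemma~\ref{lem:disjoint-cycles} and the identity~\eqref{eq:cycle-sum} immediately gives the claim.

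First I would fix a sequence $(P_1,\ldots,P_r)\in\mathcal{S}(\xvec)\setminus\mathcal{S}^\ast(\xvec)$ and let $H=G(P_1)\cup\cdots\cup G(P_r)$, a multigraph on some vertex set $U$ with $|U|=u$ and $e(H)=J=\sum_j jx_j$ edges; because some two cycles meet, $u\leq J-1$. The key quantity to bound is $\sum_{P\supset P_1\cup\cdots\cup P_r} Y(P)/M(dn)$, i.e.\ the expected number of $k$-star orientations of a random pairing conditioned on containing the fixed subpairing $P_1\cup\cdots\cup P_r$. I would bound $Y(P)$ crudely: each of the $n$ cells independently has at most $2^d$ ways to be assigned an in/out pattern consistent with in-degree $0$ or $k$, and the number of ways to complete the pairing on the remaining $dn-2J$ free points is $M(dn-2J)$. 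This yields a bound of the form $2^{dn}\,M(dn-2J)/M(dn)=O(n^{-J})\,2^{dn}$ for each fixed $(P_1,\ldots,P_r)$, with the constant uniform over the (bounded) combinatorial type. The number of sequences in $\mathcal{S}(\xvec)\setminus\mathcal{S}^\ast(\xvec)$ of a given type with vertex support of size $u\leq J-1$ is $O(n^u)=O(n^{J-1})$, since choosing the cycles amounts to choosing at most $u$ vertices and $O(1)$ adjacency data. Hence the total contribution is $O(n^{J-1})\cdot O(n^{-J})\cdot 2^{dn}=O(n^{-1})\,2^{dn}$. Comparing with $\E Y$, which by~\eqref{expectation} is $\Theta(1)\cdot c^{dn/(2k)}$ for a constant $c$, one checks that $2^{dn}/\E Y$ is at most exponential, so this naive estimate is far too weak — the exponential factors do not cancel.

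The main obstacle, therefore, is getting the exponential rates to match, and the fix is to not throw away the structure: instead of bounding $Y(P)$ by $2^{dn}$, I would mimic the counting in the proof of Lemma~\ref{lem:disjoint-cycles} but on the (possibly non-simple, vertex-overlapping) subgraph $H$. Concretely, one sums over orientations of $H$ and over the choice of which vertices of $U$ are centres/leaves, and for each such choice the number of completions is $\binom{d}{k}^{dn/(2k)-(\text{centres in }U)}\binom{n-u}{\cdots}(dn/2-J)!/M(dn)$ times local factors $\binom{d-\deg_H(v)}{\,k-(\text{in-degree of }v\text{ in }H)}$ for $v\in U$. Dividing by $\E Y$ via~\eqref{first-moment} and using~\eqref{eq:convenient}-style estimates, the global exponential factors cancel exactly as in Lemma~\ref{lem:disjoint-cycles}, leaving a bounded sum of local contributions times an overall factor of $n^{u-J}$. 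Since $u\leq J-1$ for every sequence outside $\mathcal{S}^\ast(\xvec)$, each such sequence contributes $O(n^{-1})$ relative to $\E Y$; summing over the $O(1)$ combinatorial types (the number of multigraphs $H$ with $J$ edges and $u\leq J-1$ vertices arising this way is bounded, and each is realised in $\Theta(n^u)$ ways) gives a total of $O(n^{-1})\to 0$. Together with Lemma~\ref{lem:disjoint-cycles} this establishes
\[
\frac{\E(Y(X_1)_{x_1}\cdots(X_m)_{x_m})}{\E Y}\sim\prod_{j=1}^m\big(\lambda_j(1+\delta_j)\big)^{x_j},
\]
as required. The one point demanding care is uniformity: since $m$ and hence $J$ are fixed, only finitely many types occur and all the $O(\cdot)$ bounds are uniform, so interchanging the finite sum with the asymptotics is legitimate.
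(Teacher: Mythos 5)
Your proposal is correct and follows essentially the same route as the paper: deduce the claim from Lemma~\ref{lem:disjoint-cycles} via (\ref{eq:cycle-sum}) by rerunning that lemma's counting on the union of a non-vertex-disjoint collection of cycles, cancelling the exponential factors against $\E Y$, and observing that the vertex count is strictly smaller than the edge count for any such collection, giving an $O(n^{-1})$ contribution per combinatorial type and hence $o(1)$ overall. One small imprecision worth noting: if the chosen cycles share pairs, the subpairing $P_1\cup\cdots\cup P_r$ has $b<J$ pairs, so the completion count is $(dn/2-b)!$ rather than $(dn/2-J)!$ and the relative contribution per type is $O(n^{u-b})$ rather than $O(n^{u-J})$; but one still has $u<b$ (the paper's $a<b$, which follows because some connected component of the union contains two distinct cycles and hence has cycle rank at least two), so the conclusion is unaffected.
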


\begin{proof}
Using (\ref{eq:cycle-sum}), it suffices to prove that the sum over all
$(P_1,\ldots P_r)\in\mathcal{S}(\xvec) \setminus \mathcal{S}^\ast(\xvec)$
contributes negligibly. 
We adjust the argument given in Lemma~\ref{lem:disjoint-cycles}.
Suppose that we choose cycles $(C_1,\ldots, C_r)$ with the appropriate lengths
which are not vertex-disjoint.  If $C_1\cup \cdots \cup C_r$ has $a$ vertices
and $b$ edges, then there are $O(n^a)$ ways to choose and orient 
the cycles and choose points for the pairs corresponding to edges in the cycles.
There are $O(1)$ ways to choose centres within these cycles, and $O\Big(\binom{n}{\frac{dn}{2k}}\Big)$ ways to choose the remaining centres.  There are $O\Big(\binom{d}{k}^{dn/(2k)}\Big)$ ways to choose the remaining out-points for all vertices, and $(dn/2-b)!$
ways to complete the oriented pairing, matching out-points to in-points.
Note that $a<b$ as the cycles are not vertex-disjoint.
Using (\ref{expectation}), it follows that the contribution of all $(P_1,\ldots, P_r)$
such that the corresponding cycles together have $a$ vertices and $b$ edges,
and are not vertex-disjoint, is $O(n^{a-b})=o(1)$.  The proof follows since
there are only $O(1)$ choices for $a$ and $b$.
\end{proof}

\bigskip

To complete this section we investigate the infinite sum needed for (A3).

\begin{lemma}
\label{lem:magic-number}
Let $\lambda_j$, $\delta_j$ be defined in \emph{(\ref{eq:lambda}), (\ref{eq:delta})},
for all positive integers $j$.
If $k$ is an integer such that $5\leq d < 2k$ and $(2k-d)^2 <  4k- d-2$, then
\[  \sum_{j=1}^\infty \lambda_j \delta_j^2 
  = \dfrac{1}{2}\log\left(\frac{d-1}{4k-d-2-(2k-d)^2}\right).
\]
\end{lemma}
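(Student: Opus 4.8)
The plan is to compute the infinite sum directly by substituting the definitions of $\lambda_j$ and $\delta_j$ and recognizing the resulting series as a logarithm. Writing $\lambda_j = (d-1)^j/(2j)$ and $\delta_j = \big((d-2k+1)/(d-1)\big)^j$, we get
\[
\sum_{j=1}^\infty \lambda_j \delta_j^2 = \sum_{j=1}^\infty \frac{(d-1)^j}{2j}\,\frac{(d-2k+1)^{2j}}{(d-1)^{2j}} = \frac{1}{2}\sum_{j=1}^\infty \frac{1}{j}\left(\frac{(d-2k+1)^2}{d-1}\right)^j.
\]
So the key identity is $\sum_{j\geq 1} z^j/j = -\log(1-z)$, valid for $|z|<1$, applied with $z = (d-2k+1)^2/(d-1)$. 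The first step is therefore to verify convergence, i.e.\ that $0 \leq z < 1$; equivalently $(2k-d-1)^2 < d-1$. The second step is to simplify $-\tfrac12\log(1-z)$ into the stated closed form.

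For convergence: the hypothesis is $(2k-d)^2 < 4k-d-2$. Expanding, $(2k-d-1)^2 = (2k-d)^2 - 2(2k-d) + 1 = (2k-d)^2 - (4k-2d) + 1 < (4k-d-2) - (4k-2d) + 1 = d - 1$, where the inequality uses the hypothesis. Hence $z = (2k-d-1)^2/(d-1) < 1$, and $z \geq 0$ trivially, so $z \in [0,1)$ and the series converges (when $z=0$, i.e.\ $2k-d=1$, both sides are $0$ and the identity is trivial; the formula still holds since $4k-d-2-(2k-d)^2 = 4k-d-2-1 = 4k-d-3 = d-2 = d-1 - 1 = d-1$ when... actually one checks $4k-d-2-(2k-d)^2 = d-1$ precisely when $2k-d=1$, consistent with $z=0$). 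For the closed form:
\[
1 - z = 1 - \frac{(2k-d-1)^2}{d-1} = \frac{(d-1) - (2k-d-1)^2}{d-1} = \frac{(d-1) - \big((2k-d)^2 - (4k-2d) + 1\big)}{d-1} = \frac{4k-d-2 - (2k-d)^2}{d-1},
\]
so $-\tfrac12\log(1-z) = \tfrac12\log\!\big((d-1)/(4k-d-2-(2k-d)^2)\big)$, as claimed.

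I do not expect any substantial obstacle here: the whole lemma is a bookkeeping exercise once one spots the geometric-logarithm series. The only point requiring a little care is the algebraic manipulation showing that the convergence hypothesis $(2k-d)^2 < 4k-d-2$ is exactly what is needed to make $z<1$, and that the same manipulation yields the argument of the logarithm in the stated form; the condition $d\geq 5$ (rather than $d\geq 3$) presumably enters only to guarantee that $4k-d-2-(2k-d)^2$ is positive and that the range $d/2<k$ is nonempty in a way compatible with the other hypotheses, so that the logarithm is well defined and $(2k-d)\geq 1$.
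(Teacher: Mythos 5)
Your proof is correct and follows the same route as the paper: substitute $\lambda_j$ and $\delta_j$, recognize the series as $-\tfrac12\log(1-z)$ with $z=(d-2k+1)^2/(d-1)$, and check that the hypothesis $(2k-d)^2<4k-d-2$ is precisely what guarantees $z<1$. You spell out the algebra (including the degenerate case $z=0$) somewhat more explicitly than the paper, but the argument is the same.
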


\begin{proof}
From the definition of $\lambda_j$ and $\delta_j$, we have
\[
\sum_{j=1}^\infty \lambda_j \delta_j^2 =
  \dfrac{1}{2} \sum_{j=1}^\infty \frac{(d+1-2k)^{2j}}{j\, (d-1)^j}.\]
Our assumptions on $k$ imply that $(d+1-2k)^2 < d-1$, so the series
converges.
Hence
\begin{align*}
  \sum_{j=1}^\infty \lambda_j \delta_j^2 
  &= -\dfrac{1}{2}\log\left(1 - \frac{(d+1-2k)^2}{d-1}\right)
  = \dfrac{1}{2}\log\left(\frac{d-1}{4k-d-2-(2k-d)^2}\right),
\end{align*}
as claimed.
\end{proof}

\section{Second moment} \label{s:second-moment}

Following~\cite{DP}, the \emph{signature} $S$ of an orientation $O$
is the set of the $dn/2$ in-points of the orientation. If the orientation
corresponds to a $k$-star decomposition, then 
every cell/vertex has either $k$ or~0 points in $S$.

Let $(S_1,S_2)$ be an ordered pair of signatures, corresponding
to an ordered pair of orientations $(O_1,O_2)$.
For $i=0,1,\ldots, d-k$, let $B_i$ be the number of
vertices $v$ which are centres in both 
orientations such that exactly $k-i$ points corresponding to $v$
belong to $S_1\cap S_2$.  Hence the total number of vertices
which are centres in both orientations is exactly 
$\sum_{i=0}^{d-k} B_i$.  The number of 
vertices $v$ such that $v$ is a centre in $O_1$ but not in $O_2$ is
$\dfrac{dn}{2k}-\sum_{i=0}^{d-k} B_i$, and the same number of
vertices are centres in $O_2$ but not in $O_1$.  Exactly
$n - \dfrac{dn}{k} + \sum_{i=0}^{d-k} B_i$ 
vertices are leaves in both $O_1$ and $O_2$. 

Therefore, the number of ways to ``classify'' all vertices
is
\[ \binom{n}{B_0,B_1,\ldots, B_{d-k}, \, \frac{dn}{2k} - \sum B_i,\, \frac{dn}{2k} - \sum B_i,\, n - \frac{dn}{k} + \sum B_i}.\]
(Here, and throughout this section, any sum involving $B_i$ is assumed to be over $i=0,\ldots, d-k$
unless otherwise specified.)

Next, we must choose $(S_1,S_2)$ to match these
parameters $(B_0,\ldots, B_{d-k})$.
If a vertex is a leaf in both orientation, then it contributes
no points to $S_1$ or $S_2$.
If $v$ is a leaf in $O_1$ and a centre in $O_2$, then there
are $\binom{d}{k}$ ways to assign in-points to $v$ in $(S_1,S_2)$.
The same is true if $v$ is a centre in $O_1$ and a leaf in $O_2$.

Now suppose that $v$ is a centre in both $O_1$ and $O_2$,
with $k-i$ points from $v$ in $S_1\cap S_2$.  There are
\begin{equation}
\x_i:= \binom{d}{k-i,\, i,\, i,\, d-k-i} = \frac{d!}{(k-i)!\, (d-k-i)!\, i!^2}
\label{eq:tau-def}
\end{equation}
ways to assign in-points, since $k-i$ points are in-points in both
$O_1$ and $O_2$ (meaning that that part has the same orientation
in both orientations), $d-k-i$ points are in-points in both $O_1$ and
$O_2$ (and again, that part has the same orientation in $O_1$ and $O_2$),
and there are $i$ points which are out-points in $O_1$ but
in-points in $O_2$: the same is true for points which are out-points
in $O_2$ and in-points in $O_1$.
Note that $\sum_{i=0}^{d-k} \x_i = \binom{d}{k}^2$.
Overall, there are
\[ \binom{d}{k}^{\frac{dn}{k} - 2\sum B_i}\, \,\,\,\prod_{i=0}^{d-k} 
  \x_i^{B_i}
\]
ways to choose $(S_1, S_2)$.

Finally, we must pair up all the points, which we can do in
\[
   \left( \sum (k-i) B_i \right)!\, \left( dn/2 - \sum (k-i) B_i\right)!
       \]
ways.  The first factor matches all points in $S_1\cap S_2$
to all points in the complement of $(S_1\cup S_2)$, while the
first factor matches all points in $S_1\setminus S_2$ to the
points in $S_2\setminus S_1$.  We  multiply by $1/M(dn)$ as usual,
to give the probability that this particular pairing is observed.

Define the domain
\[
  \mathcal{D} = \Big\{ \, \Bvec = (B_0,\ldots, B_{d-k}) \in\mathbb{Z}^{d-k+1}\,\, 
    \Big| \,\, \dfrac{(d-k)n}{k}\leq \sum_{i=0}^{d-k} B_i
 \leq \dfrac{dn}{2k},\quad B_0,\ldots, B_{d-k} \geq 0\, \Big\}.
\]
Then
\begin{align}
 \E(Y^2) &= \frac{1}{M(dn)}\,
 \sum_{\Bvec \in\mathcal{D}}
  \binom{n}{B_0,\ldots, B_{d-k},\, \frac{dn}{2k} - \sum B_i,\, \frac{dn}{2k} - \sum B_i,\, n-\frac{dn}{k} +
 \sum B_i} \nonumber \\
 & \hspace*{1cm} \times 
  \binom{d}{k}^{dn/k - 2\sum B_i}\, \Big(\sum (k-i)B_i\Big)!\, \Big(\frac{dn}{2}-\sum (k-i)B_i\Big)!\,
 \prod_{i=0}^{d-k} \x_i^{B_i} \nonumber \\
 &=  
    \sum_{\Bvec\in \mathcal{D}}\, J_n(\Bvec)
\label{EY2-raw}
\end{align}
where, for all $\Bvec\in\mathcal{D}$,
\[ J_n(\Bvec) = 
 \frac{n!\, (dn/2)!\, 2^{dn/2}\,
       \big(\sum (k-i)B_i\big)!\, \big(\dfrac{dn}{2}-\sum (k-i)B_i\big)!  }
  {(dn)!\, \big(\frac{dn}{2k} - \sum B_i\big)!^2\, \big(n - \frac{dn}{k} +\sum B_i\big)!}\, 
          \binom{d}{k}^{ dn/k- 2\sum B_i}\,  \,\,
  \prod_{i=0}^{d-k} \frac{\x_i^{B_i}}{B_i!}.
\]
Let $x\vee y$ denote $\max(x,y)$.  
Applying Stirling's formula in the form
\[ \log(N!) = N\log N - N + \dfrac{1}{2}\log(N\vee 1) + \dfrac{1}{2}\log 2\pi +
   O(1/(N+1)),\]
valid for all nonnegative integers, to  $J_n(\Bvec)$ shows that 
\begin{align}
 J_n(\Bvec) &= 
   \frac{\sqrt{n/2}\, \binom{d}{k}^{dn/k}}{(2\pi)^{(d-k+1)/2}\, d^{dn/2}\, n^{(d-2)n/2}}
 \cdot \sqrt{\frac{ \Big(\sum (k-i)B_i\Big) \Big(\frac{dn}{2} - \sum (k-i)B_i\Big)}
        { \Big(\frac{dn}{2k} - \sum B_i\Big)^2\, \Big(n-\frac{dn}{k} 
         + \sum B_i\Big)\, \prod_{i=0}^{d-k} B_i }}\nonumber \\
 & \times
  \frac{\Big(\sum (k-i)B_i\Big)^{\sum (k-i)B_i}\,
       \Big(\frac{dn}{2} - \sum (k-i)B_i\Big)^{dn/2 - \sum (k-i)B_i}}
       { \binom{d}{k}^{2\sum B_i}\, \Big( \frac{dn}{2k} - \sum B_i\Big)^{dn/k - 2\sum B_i}
         \, \Big(n - \frac{dn}{k} + \sum B_i\Big)^{n - dn/k + \sum B_i}}
  \,\, \cdot \,\,
    \prod_{i=0}^{d-k} \left(\frac{\x_i}{B_i}\right)^{B_i}
  \nonumber \\
 & \times
\left(1 + O\left(\frac{1}{\frac{dn}{2}  + 1 - \sum (k-i)B_i} \, +\,
   \frac{1}{\frac{dn}{2k} + 1 - \sum B_i} \right.\right.\nonumber \\
  & \left. \left. \hspace*{52mm} {} 
+ \, \frac{1}{n - \frac{dn}{k} + 1 + \sum B_i} \,
 +\, \sum_{i=0}^{d-k} \frac{1}{B_i + 1}\right)\right),\label{JnB-stirling}
\end{align}
where any factor in the denominator which equals zero should be replaced by~1.

\subsection{Laplace summation}

We now wish to apply Laplace's method to compute the asymptotic value of the
summation (\ref{EY2-raw}).  The following lemma, from Greenhill, Janson and 
Ruci{\'n}ski~\cite{GJR}, will help us perform our calculations.
(Some notation has been changed slightly to avoid clashes with notation used here.) 

\begin{lemma}[\protect{\cite[Lemma 6.3]{GJR}}] \label{lemma:laplace}
Suppose the following:
\begin{enumerate}
\item[\emph{(i)}] $\mathcal{L} \subset \mathbb{R}^m$ is a lattice with full rank $m$.
\item[\emph{(ii)}] $K \subset \mathbb{R}^m$ is a compact convex set with non-empty interior.
\item[\emph{(iii)}] $\varphi: K \to \mathbb{R}$ is a continuous function with a unique maximum at some interior point $z_0 \in K^\circ$.
\item[\emph{(iv)}] $\varphi$ is a twice continuously differentiable in a neighbourhood of $z_0$ and the Hessian $H_0:=D^2 \varphi(z_0)$ is strictly negative definite.
\item[\emph{(v)}] $\psi: K_1 \to \mathbb{R}$ is a continuous function on some neighbourhood $K_1 \subset K$ of $z_0$ with $\psi(z_0) > 0$.
\item[\emph{(vi)}] For each positive integer $n$ there is a vector $\ell_n \in \mathbb{R}^m$.
\item[\emph{(vii)}] For each positive integer $n$ there is a positive real number $A_n$ and a function $J_n: (\mathcal{L}+\ell_n) \cap nK \to \mathbb{R}$ such that, as $n \to \infty$,
\begin{align*}
& J_n(\ell) = O(A_n e^{n\varphi(\ell/n) + o(n)}), & \ell \in (\mathcal{L}+\ell_n) \cap nK, 
\intertext{and} 
& J_n(\ell) = A_n(\psi(\ell/n) + o(1))e^{n\varphi(\ell/n)}, & \ell \in (\mathcal{L}+\ell_n) \cap nK_1, 
\end{align*}
uniformly for $\ell$ in the indicated sets.
\end{enumerate}
Then, as $n \to \infty$,
\[
\sum_{(\mathcal{L} + \ell_n) \cap nK} J_n(\ell) \sim \frac{(2\pi)^{m/2}\, \psi(z_0)}{\det(\mathcal{L})\, \det(-H_0)^{1/2}}\, A_n \,n^{m/2} \,e^{n \varphi(z_0)}.
\]
\end{lemma}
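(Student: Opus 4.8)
The plan is to prove Lemma~\ref{lemma:laplace} by a standard Laplace-type localisation: concentrate the sum on lattice points $\ell$ with $\ell/n$ close to $z_0$, replace $\varphi$ there by its second-order Taylor expansion, and recognise the resulting sum of Gaussians over an increasingly fine lattice as a Riemann sum for a Gaussian integral. Fix a small $\epsilon>0$, to be sent to $0$ at the very end. Since $\varphi$ is $C^2$ near $z_0$ and $\nabla\varphi(z_0)=0$ (as $z_0\in K^\circ$ is a maximum), there is $\rho=\rho(\epsilon)>0$ with $B(z_0,\rho)\subseteq K^\circ\cap K_1$, with $\varphi\in C^2$ and $|\psi(z)-\psi(z_0)|\le\epsilon$ on $B(z_0,\rho)$, and with
\[
	\tfrac12(z-z_0)^{\!\top}H_0^{-}(z-z_0)\,\le\,\varphi(z)-\varphi(z_0)\,\le\,\tfrac12(z-z_0)^{\!\top}H_0^{+}(z-z_0)\qquad\text{on } B(z_0,\rho),
\]
where $H_0^{\pm}:=H_0\pm2\epsilon I$ are negative definite for $\epsilon$ small; shrinking $\rho$ we may also assume $\varphi(z)-\varphi(z_0)\le-c\,|z-z_0|^2$ on $B(z_0,\rho)$ for a fixed $c>0$. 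This is where only $C^2$ regularity is needed: the $o(|z-z_0|^2)$ Taylor remainder is absorbed into the $\pm2\epsilon I$ perturbations of $H_0$, and $\rho$ stays fixed rather than shrinking with $n$.

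First I would discard the \emph{outer} range, where $\ell/n\notin B(z_0,\rho)$. By uniqueness of the maximum, $\varphi$ attains a value at most $\varphi(z_0)-3\kappa$ for some $\kappa=\kappa(\rho)>0$ on the compact set $K\setminus B(z_0,\rho)$. Combining the crude bound $J_n(\ell)=O(A_ne^{n\varphi(\ell/n)+o(n)})$ with the count $|(\mathcal L+\ell_n)\cap nK|=O(n^m)$, the outer contribution is $O(A_n n^m e^{n(\varphi(z_0)-2\kappa)})=o(A_n n^{m/2}e^{n\varphi(z_0)})$, hence negligible compared with the target.

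For the \emph{inner} range, where $\ell/n\in B(z_0,\rho)\subseteq K_1$, I would use the refined hypothesis $J_n(\ell)=A_n(\psi(\ell/n)+o(1))e^{n\varphi(\ell/n)}$ (uniformly), the bound $|\psi(\ell/n)-\psi(z_0)|\le\epsilon$, and the quadratic sandwich, rewritten after the substitution $u=(\ell-nz_0)/\sqrt n$ via $\tfrac n2(\ell/n-z_0)^{\!\top}H_0^{\pm}(\ell/n-z_0)=\tfrac12 u^{\!\top}H_0^{\pm}u$. This sandwiches the inner sum between $(1+o(1))A_n(\psi(z_0)\pm2\epsilon)e^{n\varphi(z_0)}S_n^{\pm}$, where $S_n^{\pm}$ is the sum of $e^{\frac12 u^{\!\top}H_0^{\pm}u}$ over $u$ in a translate of the lattice $\tfrac1{\sqrt n}\mathcal L$ restricted to $|u|<\sqrt n\,\rho$. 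That lattice has covolume $\det(\mathcal L)/n^{m/2}$ and mesh tending to $0$, the cutoff $\sqrt n\,\rho\to\infty$, and $u\mapsto e^{\frac12 u^{\!\top}H_0^{\pm}u}$ is continuous and integrable on $\mathbb R^m$ (as $H_0^{\pm}$ is negative definite), so convergence of lattice Riemann sums gives
\[
	S_n^{\pm}\,\sim\,\frac{n^{m/2}}{\det(\mathcal L)}\int_{\mathbb R^m}e^{\frac12 u^{\!\top}H_0^{\pm}u}\,du\,=\,\frac{n^{m/2}}{\det(\mathcal L)}\cdot\frac{(2\pi)^{m/2}}{\det(-H_0^{\pm})^{1/2}},
\]
the truncation being harmless since the tails of both the integral and the lattice sum beyond a fixed radius are uniformly small. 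Putting the inner and outer estimates together and letting $\epsilon\to0^{+}$, so that $H_0^{\pm}\to H_0$ and $\psi(z_0)\pm2\epsilon\to\psi(z_0)$, squeezes $\sum_{(\mathcal L+\ell_n)\cap nK}J_n(\ell)\big/\big(A_n n^{m/2}e^{n\varphi(z_0)}\big)$ to the claimed limit $(2\pi)^{m/2}\psi(z_0)\big/\big(\det(\mathcal L)\det(-H_0)^{1/2}\big)$.

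The main obstacle I anticipate is making the lattice Riemann-sum step fully rigorous and uniform in $n$: one must show that summing the Gaussian $e^{\frac12 u^{\!\top}H_0^{\pm}u}$ over an increasingly fine, translated lattice, truncated to the ball of radius $\sqrt n\,\rho$, is asymptotic to $n^{m/2}/\det(\mathcal L)$ times the corresponding Gaussian integral. This is a standard but slightly delicate comparison of the lattice sum with the integral over fundamental cells together with a Gaussian-tail bound, and it is precisely the step that produces the normalisation constant $\det(\mathcal L)$. The remaining work is routine bookkeeping of the $\epsilon$-sandwich and the elementary Gaussian integral $\int_{\mathbb R^m}e^{\frac12 u^{\!\top}Au}\,du=(2\pi)^{m/2}\det(-A)^{-1/2}$ for negative definite $A$.
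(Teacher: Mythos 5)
This lemma is quoted verbatim from Greenhill--Janson--Ruci\'nski \cite[Lemma~6.3]{GJR} and is not re-proved in the present paper, so there is no in-paper proof to compare against. Your argument --- discard the region $\{\ell/n\notin B(z_0,\rho)\}$ using the outer $O(A_n e^{n\varphi+o(n)})$ bound together with $\varphi\leq\varphi(z_0)-3\kappa$ off the ball, then on the inner ball apply the refined asymptotics for $J_n$, sandwich $\varphi(z)-\varphi(z_0)$ between quadratic forms in $H_0\pm2\epsilon I$ via Taylor's theorem with integral remainder, rescale $u=(\ell-nz_0)/\sqrt n$, recognise the resulting Gaussian lattice sum as a Riemann sum of covolume $\det(\mathcal L)/n^{m/2}$ converging to $(2\pi)^{m/2}\det(-H_0^\pm)^{-1/2}$, and finally send $\epsilon\to 0$ --- is correct and is the standard Laplace-summation proof used in \cite{GJR}.
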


\bigskip
	
To apply this lemma to (\ref{EY2-raw}), we first introduce the shorthand notation
\[ \beta_{\bvec} = \sum_{i=0}^{d-k} b_i,\qquad
   \gamma_{\bvec} = \sum_{i=0}^{d-k} (k-i) b_i.
\]
Then define the scaled domain
\[
 K = \Big\{ \bvec=(b_0,\ldots, b_{d-k})\in\mathbb{R}^{d-k+1} \,\, \Big|\,\, \dfrac{d-k}{k} \leq b_0 + \cdots + b_{d-k} \leq \dfrac{d}{2k},
    \quad b_0,\ldots, b_{d-k}\geq 0 \Big\}
\]
and let
\begin{align*}
A_n &= \frac{d^{-dn/2}}{\sqrt{2(2\pi n)^{d-k+1}}} \,  \binom{d}{k}^{dn/k},\quad\quad
\psi(\bvec) = \sqrt{\frac{\gamma_{\bvec}\, (d/2 - \gamma_{\bvec})}
		    {(d/(2k) - \beta_{\bvec})^2\, (1- d/k + \beta_{\bvec})\,
           \prod_{i=0}^{d-k} b_i}}, \nonumber \\
\varphi(\bvec) &= g(\gamma_{\bvec}) + g(d/2 - \gamma_{\bvec}) -
  2 g(d/(2k) - \beta_{\bvec}) - g(1-d/k + \beta_{\bvec}) - 2\beta_{\bvec}\log \binom{d}{k}\nonumber \\
 & \hspace*{6cm}
   + \sum_{i=0}^{d-k} b_i\log(\x_i) - \sum_{i=0}^{d-k} g(b_i),
	\end{align*}
	where $g(x) = x \log x$ for $x > 0$ and $g(0) = 0$. 

The following crucial result, which establishes property (iii) of Lemma~\ref{lemma:laplace}, will be proved in Section~\ref{s:maximisation}.
	
\begin{lemma} \label{lem:global-max}
Assume that $d,k$ are integers such that $d\geq 5$ 
and at least one of the following two conditions hold:
\begin{itemize}
	\item[\emph{(I)}] $d/2 +1<k \leq \kSSCM{d}$; 
	\item[\emph{(II)}] $d/2 < k \leq d/2+ \max\left\{1,\dfrac{\log d}{6}\right\}$.
\end{itemize}
Then the unique global maximum of $\varphi$ over $K$ occurs at the point $\bvec^*$ defined by
\begin{equation}
\label{bdef}
 b^*_i:= \left(\frac{d}{2k}\right)^2\, \binom{d}{k}^{-2} \x_i \quad \text{ for\, $i=0,\ldots, d-k$},
\end{equation}
where $\x_i$ is defined in \emph{(\ref{eq:tau-def})} for $i=0,\ldots, d-k$.
\end{lemma}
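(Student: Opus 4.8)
## Proof Plan for Lemma~\ref{lem:global-max}

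The plan is to locate the stationary points of $\varphi$ on the interior of $K$ by solving the Lagrange-free system $\nabla\varphi = 0$ (the constraint $\tfrac{d-k}{k}\le\beta_{\bvec}\le\tfrac{d}{2k}$ will turn out to be inactive at the maximiser), and then to verify that the candidate $\bvec^*$ given in~\eqref{bdef} is the unique global maximum. First I would compute $\partial\varphi/\partial b_i$. Writing $g'(x)=1+\log x$, the derivative of $\varphi$ with respect to $b_i$ is
\[
\log\x_i - g'(b_i) + (k-i)\bigl(g'(\gamma_{\bvec}) - g'(d/2-\gamma_{\bvec})\bigr) + 2 g'(d/(2k)-\beta_{\bvec}) - g'(1-d/k+\beta_{\bvec}) - 2\log\binom{d}{k}.
\]
Setting this to zero and exponentiating, each stationary point satisfies
\[
b_i = \x_i\, \binom{d}{k}^{-2}\, \rho^{\,k-i}\, \sigma
\]
for some positive reals $\rho,\sigma$ depending only on $\gamma_{\bvec}$ and $\beta_{\bvec}$, namely $\rho = \gamma_{\bvec}/(d/2-\gamma_{\bvec})$ and $\sigma = (d/(2k)-\beta_{\bvec})^2/(1-d/k+\beta_{\bvec})$. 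Substituting this form back into the definitions of $\beta_{\bvec}=\sum_i b_i$ and $\gamma_{\bvec}=\sum_i(k-i)b_i$, and recalling that $f(x)=\binom{d}{k}^{-1}\sum_i \binom{k}{i}\binom{d-k}{i}x^{k-i}$ and that $\x_i = \binom{d}{i}\binom{k}{i}\binom{d-k}{i}$\,---\,wait, one should double-check: $\x_i=\binom{d}{k-i,i,i,d-k-i}=\binom{d}{k}\binom{k}{i}\binom{d-k}{i}$, so $\binom{d}{k}^{-1}\sum_i\x_i\rho^{k-i} = f(\rho)$\,---\,the two defining equations collapse to a \emph{single} scalar equation in the variable $x := \rho$. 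This is where the function $\eta$ and conditions~\eqref{eq:P1}, \eqref{eq:P2} enter: I expect the consolidated equation to be exactly $(1+\eta(x))f(x) = (x+1)f'(x)/k$, with the interval in~\eqref{eq:P1} being the range of $x$ consistent with the constraint $\bvec\in K^\circ$. The candidate $\bvec^*$ of~\eqref{bdef} corresponds to $x=1$ (where $\rho=1$, i.e. $\gamma_{\bvec}=d/4$ in a symmetric sense, and $\sigma = (d/(2k))^2$), so by~\eqref{eq:P1} this is the \emph{only} stationary point in the interior.

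Having reduced to a unique interior critical point, I would then rule out the maximum occurring on the boundary of $K$. The boundary consists of faces where some $b_i=0$ or where $\beta_{\bvec}$ attains $\tfrac{d-k}{k}$ or $\tfrac{d}{2k}$. On a face with $b_i=0$, note $g(b_i)=0$ but $b_i\log\x_i\to 0$ as well, and $\varphi$ remains finite; one checks $\partial\varphi/\partial b_i \to +\infty$ as $b_i\to 0^+$ (because $-g'(b_i)=-1-\log b_i\to+\infty$ dominates), so $\varphi$ is strictly increasing into the interior from each such face and the maximum cannot lie there. On the faces $\beta_{\bvec}=\tfrac{d}{2k}$ (no centres in common with one orientation vacated) or $\beta_{\bvec}=\tfrac{d-k}{k}$, a similar logarithmic-blowup argument on the terms $g(d/(2k)-\beta_{\bvec})$ and $g(1-d/k+\beta_{\bvec})$ shows the gradient points inward. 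Alternatively, and more robustly, I would invoke that $\E(Y^2)\ge(\E Y)^2$ combined with the Laplace asymptotics to argue $\varphi(\bvec^*)$ must be the global max once uniqueness of interior critical points is known\,---\,but the direct boundary analysis is cleaner and self-contained. Finally I would verify condition~(iv) of Lemma~\ref{lemma:laplace} separately when invoked, but for the present lemma only global maximality is asserted.

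The main obstacle is the reduction step: showing that the $(d-k+1)$-dimensional stationarity system genuinely collapses to the one-variable equation in~\eqref{eq:P1}, and that the relevant solution interval is precisely the one stated. This requires carefully tracking how $\beta_{\bvec}$ and $\gamma_{\bvec}$ are expressed through $f(x)$, $f'(x)$ and $\eta(x)$; in particular one must solve $\sigma = (d/(2k)-\beta_{\bvec})^2/(1-d/k+\beta_{\bvec})$ for $\beta_{\bvec}$ in terms of $x$, which is a quadratic whose discriminant is exactly $(d-k)^2 + d(2k-d)f(x)$\,---\,matching the square root in the definition~\eqref{eq:eta-def} of $\eta$. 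Condition~\eqref{eq:P2} is needed to ensure this discriminant analysis is valid and the relevant root lies in the admissible range; it is also what guarantees (via Lemma~\ref{lem:magic-number}) the series in~(A3) converges, so the two hypotheses dovetail. The secondary difficulty is handling case~(II), where $k$ is only barely above $d/2$: there~\eqref{eq:P1} is not assumed directly, so one must instead establish the uniqueness of the interior critical point by the explicit estimates promised in Section~\ref{ss:proof-B-k0}, presumably by bounding $\widehat f(x)$ away from zero off $x=1$ using the near-symmetry of $f$ when $2k-d$ is small. Once uniqueness and the boundary behaviour are in hand, the identification of $\bvec^*$ via~\eqref{bdef} is immediate by plugging $x=1$ into the reduced formulas.
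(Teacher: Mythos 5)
Your plan follows the same strategy as the paper: set $\nabla\varphi=0$, obtain $b_i=\x_i\mu^{-2}\rho^{k-i}\sigma$ with $\rho=\gamma/(d/2-\gamma)$ and $\sigma=(d/(2k)-\beta)^2/(1-d/k+\beta)$, collapse to a one-variable equation in $x=\rho$, and separately rule out boundary maxima. That is exactly the structure of Sections~\ref{ss:interior}--\ref{s:large-d} and Lemma~\ref{lem:boundary}. However there are two concrete gaps.

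First, the boundary argument is incomplete at the corner $\avec=(d/(2k),0,\ldots,0)$. There you have \emph{both} $d/(2k)-\beta_{\bvec}=0$ and $d/2-\gamma_{\bvec}=0$ simultaneously, so the partial derivative is an indeterminate combination of $+\infty$ and $-\infty$ terms, and your ``logarithmic blowup'' argument does not decide the direction. The paper handles $\avec$ separately by computing $\varphi(\avec)$ explicitly and showing $\varphi(\bvec^*)>\varphi(\avec)$; that inequality is equivalent to~\eqref{orientation-expectation-threshold}, which is the content of Lemma~\ref{L:k-star} and is precisely what fails when $(d,k)=(5,4)$. Your plan needs to recognise this exceptional face and supply the direct value comparison rather than a gradient argument.

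Second, under assumption~(I) you claim that condition~\eqref{eq:P1} already gives uniqueness of the interior critical point, but \eqref{eq:P1} only asserts uniqueness on the bounded interval $\bigl((1+\nfrac{(2k-d)^2d}{k(d-k)(4k-d-2-(2k-d)^2)})^{-1},\,\nfrac{5k-2d}{d-k}\bigr)$, whereas after the reduction you must rule out solutions of $(x+1)f'/k=(1+\eta)f$ on all of $(0,\infty)$. The paper spends the bulk of Sections~\ref{s:maximisation}.3--\ref{s:maximisation}.4 doing exactly this: Lemma~\ref{L1} shows the left side is strictly smaller than $f$ once $x\ge(5k-2d)/(d-k)$, and the $A(t)+B(t)$ analysis (via the substitution $t=1/x$, $g(t)=t^kf(1/t)$, Lemmas~\ref{l:Claim1}, \ref{l:monotone}, \ref{L3}) rules out small $x$; the inequality~\eqref{eq:P2} is needed there to make $A'(1)>0$. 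Neither of these steps appears in your plan, so as written it would not establish the claimed global uniqueness even granted~\eqref{eq:P1}. For case~(II) you correctly flag that a separate argument is needed, but the same large-$x$/small-$x$ splitting occurs there too (Lemmas~\ref{L1-smalld}, \ref{L1-bigd} and again Lemma~\ref{L3}).
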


There is a natural interpretation for $b_i^*$ as the expected number of cells which are centres in both $O_1$ and $O_2$ and with $k-i$ points in $S_1\cap S_2$, if $O_1$ and $O_2$ are chosen independently and uniformly at random (from all possible orientations of all possible pairings in $\mathcal{P}_{n,d}$).  To see this, note that a cell is a centre in $O_1$ with probability $d/(2k)$, and hence a centre in both $O_1$ and $O_2$ 
with probability $\left(\frac{d}{2k}\right)^2$. Then for each cell which
is a centre in both $O_1$ and $O_2$, the probability that exactly $k-i$ points from this cell are in-points in both $O_1$ and $O_2$ is
\[ \binom{k}{k-i}\, \binom{d-k}{i}\, \binom{d}{k}^{-1} = \binom{d}{k}^{-2}\, \tau_i.\]

\bigskip

Next we prove some useful facts about $b^*$.

\begin{lemma}
\label{lem:identities}
The point $\bvec^*$ satisfies 
\[
\beta_{\bvec^*} = \sum_{i=0}^{d-k} b_i^* = \left(\frac{d}{2k}\right)^2,\qquad
\gamma_{\bvec^*} = \sum_{i=0}^{d-k} (k-i)\, b_i^* = \frac{d}{4},
\]
\begin{align*}
\varphi(\bvec^*) &=  \frac{d(k-2)}{2k}\log d + 2\log k - (d-2)\log 2 - \frac{2k-d}{k}\log(2k-d),\\
\psi(\bvec^*) &= \frac{2k^3}{(2k-d)^2}\, \bigg(\prod_{i=0}^{d-k} b_i^*\bigg)^{-1/2}.
		\end{align*}
Furthermore,
\[
\sum_{i=0}^{d-k} (k-i)^2\, b_i^* = \frac{d}{4} + \frac{d(k-1)^2}{4(d-1)}.
\]
\end{lemma}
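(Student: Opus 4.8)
The plan is to reduce everything to the generating function $f$ from~\eqref{expan0}. The starting point is the elementary identity
\[
 \x_i \;=\; \binom{d}{k-i,\,i,\,i,\,d-k-i} \;=\; \binom{d}{k}\binom{k}{i}\binom{d-k}{i},
\]
obtained by expanding the multinomial coefficient and factoring out $\binom{d}{k}$. Hence $\sum_{i=0}^{d-k}\x_i\,x^{k-i} = \binom{d}{k}^2 f(x)$. Since $f$ is the PGF of $\mathrm{HypGeom}(d,k,k)$ --- equivalently, of $|S_1\cap S_2|$ for independent uniformly random $k$-subsets $S_1,S_2$ of $[d]$, with the Vandermonde identity $\sum_i\binom{k}{i}\binom{d-k}{i}=\binom{d}{k}$ supplying the normalisation --- we have the moment values $f(1)=1$, $f'(1)=k^2/d$ and $f''(1)=k^2(k-1)^2/\big(d(d-1)\big)$, the last being the second factorial moment of that distribution.

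Now recall $b_i^* = \big(\tfrac{d}{2k}\big)^2\binom{d}{k}^{-2}\x_i$, so that $\sum_i c_i\,b_i^* = \big(\tfrac{d}{2k}\big)^2\binom{d}{k}^{-2}\sum_i c_i\,\x_i$ for any weights $c_i$. Differentiating $\sum_i \x_i x^{k-i}=\binom{d}{k}^2 f(x)$ zero, one and two times and setting $x=1$ evaluates $\sum_i\x_i$, $\sum_i(k-i)\x_i$ and $\sum_i(k-i)(k-i-1)\x_i$ as $\binom{d}{k}^2 f(1)$, $\binom{d}{k}^2 f'(1)$ and $\binom{d}{k}^2 f''(1)$. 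This gives at once $\beta_{\bvec^*}=\big(\tfrac{d}{2k}\big)^2$, $\gamma_{\bvec^*}=\tfrac{d^2}{4k^2}\cdot\tfrac{k^2}{d}=\tfrac{d}{4}$, and, using $(k-i)^2=(k-i)(k-i-1)+(k-i)$,
\[
 \sum_{i=0}^{d-k}(k-i)^2 b_i^* \;=\; \frac{d^2}{4k^2}\big(f''(1)+f'(1)\big) \;=\; \frac{d(k-1)^2}{4(d-1)}+\frac{d}{4}.
\]

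For $\varphi(\bvec^*)$ and $\psi(\bvec^*)$ I would substitute the just-computed values of $\beta_{\bvec^*}$ and $\gamma_{\bvec^*}$ into the definitions. Two reductions do most of the work: $d/(2k)-\beta_{\bvec^*}=\tfrac{d(2k-d)}{4k^2}$ and $1-d/k+\beta_{\bvec^*}=\big(\tfrac{2k-d}{2k}\big)^2$; and, crucially, since $\x_i/b_i^* = \tfrac{4k^2}{d^2}\binom{d}{k}^2$ does not depend on $i$, the tail $\sum_i b_i^*\log\x_i-\sum_i b_i^*\log b_i^*=\sum_i b_i^*\log(\x_i/b_i^*)$ collapses to $\beta_{\bvec^*}\log\!\big(\tfrac{4k^2}{d^2}\binom{d}{k}^2\big)$. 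After these substitutions, $\varphi(\bvec^*)$ is an explicit combination of $\log d,\log k,\log 2,\log(2k-d)$ and $\log\binom{d}{k}$; the $\log\binom{d}{k}$ terms cancel, and collecting the coefficient of each remaining logarithm (each a rational function of $d,k$ whose numerator telescopes, e.g.\ to $4k^2$ for $\log k$ and $\log 2$, and to $d(k-2)/2$ for $\log d$) produces the claimed formula. The value $\psi(\bvec^*)=\tfrac{2k^3}{(2k-d)^2}\big(\prod_i b_i^*\big)^{-1/2}$ then drops out by plugging the same three quantities into $\psi$ and cancelling the resulting powers of $2,k,d,(2k-d)$.

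The only real effort is the bookkeeping in the evaluation of $\varphi(\bvec^*)$: one must combine half a dozen logarithmic terms, verify that the binomial-coefficient contributions cancel, and check that each surviving coefficient simplifies as stated. This is entirely routine once the two reductions above are in place; every other part of the lemma is a one-line substitution.
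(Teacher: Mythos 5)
Your proof is correct and follows essentially the same approach as the paper: both reduce $\beta_{\bvec^*}$, $\gamma_{\bvec^*}$ and $\sum(k-i)^2 b_i^*$ to (factorial) moments of the hypergeometric distribution --- the paper by directly manipulating the sums $\sum(k-i)_j\,\x_i$ into products of binomial coefficients, you via derivatives of the PGF identity $\sum_i\x_i x^{k-i}=\binom{d}{k}^2 f(x)$, which are two standard renderings of the same calculation --- and then obtain $\varphi(\bvec^*)$ and $\psi(\bvec^*)$ by substitution. Your explicit observation that $\x_i/b_i^*$ is independent of $i$ is exactly the simplification that makes the ``direct substitution'' the paper invokes tractable, and your telescoping of the logarithmic coefficients checks out.
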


\begin{proof}
Recall that $\x_i = \binom{d}{k-i,i,i,d-k-i}$ is the number of ways to choose
an ordered pair of subsets $A_1,A_2$, both of size $k$, from a set of size $d$,
so that $A_1\cap A_2 = k-i$.  Hence
$\sum_{i=0}^{d-k} \x_i = \binom{d}{k}^2$. The stated value of $\beta_{\bvec^*}$ is obtained
after multiplication by $\left(\frac{d}{2k}\right)^2\,\binom{d}{k}^{-2}$, 
by definition of $\bvec^*$.
Next, 
\begin{align*}
 \sum_{i=0}^{d-k} (k-i) \x_i &= \sum_{i=0}^{d-k} \frac{d!}{(k-i-1)!\, i!^2\, (d-k+i)!}\\
    &= d\, \sum_{i=0}^{d-k} \frac{(d-1)!}{(k-1-i)!\, i!^2\, (d-k+i)!}
     = d\binom{d-1}{k-1}^2,
\end{align*}
	arguing as above.  The stated value of $\gamma_{\bvec^*}$ follows, and now the expressions for $\varphi(\bvec^*)$ and
	$\psi(\bvec^*)$ can be obtained by direct substitution. 
	Finally
\begin{align*} 
\sum_{i=0}^{d-k} (k-i)^2 \x_i &= 
\left( \sum_{i=0}^{d-k} (k-i) \x_i\right) + 
  \left( \sum_{i=0}^{d-k} (k-i)_2 \x_i\right)\\
 &= d\, \binom{d-1}{k-1}^2 + \sum_{i=0}^{d-k} \frac{d!}{(k-i-2)!\, i!^2\, (d-k+i)!}\\
&= d\, \binom{d-1}{k-1}^2 + d(d-1)\binom{d-2}{k-2}^2,
\end{align*}
and multiplying by $\left(\frac{d}{2k}\right)^2\,\binom{d}{k}^{-2}$
completes the proof of the final identity.
\end{proof}

\bigskip

Let $H_*:=D^2 \varphi(\bvec^*)$ be the Hessian of $\varphi$ at $\bvec^*$.
To calculate $\det(-H_*)$ we prove the following lemma.  The main tools in the proof are the Matrix Determinant Lemma~\cite[equation (6.2.3)]{meyer}, which says
\begin{equation}
\label{eq:MDL}
 \det(A + \uvec\vvec^T) = (1 + \vvec^T\, A^{-1}\, \uvec)\, \det A,
\end{equation}
and the Sherman--Morrison Theorem~\cite[equation (3.8.2)]{meyer}),
which says
\begin{equation}
\label{eq:SM}
 (A + \uvec\vvec)^{-1} = A^{-1} - \frac{A^{-1}\, \uvec\vvec^T A^{-1}}{1 + \vvec^T A^{-1} \uvec},
\end{equation}
for any square matrix $A$ and vectors $\uvec$, $\vvec$ of the same dimension.

\begin{lemma}
\label{lem:gen-determinant}
Let $D$ be a diagonal $m\times m$ real matrix with entries $(d_1,\ldots, d_m)$,
and let $\wvec = (w_1,\ldots, w_m)^T$, $\vvec = (v_1,\ldots, v_m)^T\in\mathbb{R}^m$,
for some positive integer $m$.  Then
\[ \det(D + \vvec\vvec^T - \wvec\wvec^T) 
      = \left( \Big(1 + \sum_{i=1}^m v_i^2/d_i\Big)\Big(1 - \sum_{i=1}^m w_i^2/d_i\Big)
          + \Big( \sum_{i=1}^m v_iw_i/d_i\Big)^2\right)\, \prod_{j=1}^n d_j.
\]
\end{lemma}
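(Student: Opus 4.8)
The plan is to reduce the rank-two perturbation to two successive rank-one updates, each handled by the Matrix Determinant Lemma~(\ref{eq:MDL}) and, for the inverse that appears, by the Sherman--Morrison formula~(\ref{eq:SM}). Set $A = D + \vvec\vvec^T$, so that $D + \vvec\vvec^T - \wvec\wvec^T = A + (-\wvec)\wvec^T$. Applying~(\ref{eq:MDL}) with the role of $\uvec$ played by $-\wvec$ and the role of the second vector played by $\wvec$ gives
\[
\det(D + \vvec\vvec^T - \wvec\wvec^T) = \bigl(1 - \wvec^T A^{-1}\wvec\bigr)\, \det A ,
\]
and a second application of~(\ref{eq:MDL}), this time to $A = D + \vvec\vvec^T$ itself, yields
\[
\det A = \bigl(1 + \vvec^T D^{-1}\vvec\bigr)\, \prod_{j=1}^m d_j = \Bigl(1 + \sum_{i=1}^m v_i^2/d_i\Bigr)\, \prod_{j=1}^m d_j .
\]

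It remains to evaluate $\wvec^T A^{-1}\wvec$. By Sherman--Morrison~(\ref{eq:SM}),
\[
A^{-1} = (D + \vvec\vvec^T)^{-1} = D^{-1} - \frac{D^{-1}\vvec\vvec^T D^{-1}}{1 + \vvec^T D^{-1}\vvec},
\]
so that, writing the quadratic forms out coordinatewise,
\[
\wvec^T A^{-1}\wvec = \sum_{i=1}^m \frac{w_i^2}{d_i} - \frac{\bigl(\sum_{i=1}^m v_i w_i/d_i\bigr)^2}{1 + \sum_{i=1}^m v_i^2/d_i}.
\]
Substituting this into the first displayed equation and carrying out the product of the two scalar factors, the denominator $1 + \sum_i v_i^2/d_i$ cancels, leaving
\[
\det(D + \vvec\vvec^T - \wvec\wvec^T) = \left(\Bigl(1 + \sum_{i=1}^m v_i^2/d_i\Bigr)\Bigl(1 - \sum_{i=1}^m w_i^2/d_i\Bigr) + \Bigl(\sum_{i=1}^m v_i w_i/d_i\Bigr)^2\right)\prod_{j=1}^m d_j ,
\]
which is exactly the claimed identity.

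The computation is routine and I do not anticipate a real obstacle; the only point requiring a word of care is the non-degeneracy needed to invoke~(\ref{eq:MDL}) and~(\ref{eq:SM}), namely that $D$ is invertible (already implicit in the statement, since each $d_i$ appears in a denominator) and that $1 + \sum_i v_i^2/d_i \neq 0$. Once the factor $\prod_j d_j$ is distributed, both sides of the asserted identity are polynomials in $d_1,\dots,d_m,v_1,\dots,v_m,w_1,\dots,w_m$, and they agree on the Zariski-dense open set where $\prod_j d_j \neq 0$ and $1 + \sum_i v_i^2/d_i \neq 0$; hence they agree identically. Alternatively, in the intended application $D$ arises from a strictly negative definite Hessian, so all quantities in sight are well defined and the issue does not arise.
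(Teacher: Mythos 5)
Your proof is correct and is essentially the same argument as in the paper: both apply the Matrix Determinant Lemma twice and the Sherman--Morrison formula once, the only difference being that you peel off the $-\wvec\wvec^T$ update first (using $D+\vvec\vvec^T$ as the intermediate matrix) whereas the paper peels off $+\vvec\vvec^T$ first (using $D-\wvec\wvec^T$). The concluding remark about extending from the Zariski-dense set of non-degenerate parameters is a small, sensible addition not present in the paper's proof.
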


\begin{proof}
Applying (\ref{eq:MDL}) twice, we obtain
\begin{align*}
\det(D  + \vvec\vvec^T - \wvec\wvec^T) 
   &= \det(D - \wvec\wvec^T)\, \left( 1 + \vvec^T(D - \wvec\wvec^T)^{-1}\, \vvec\right)\\
   &= \det(D)\, \bigg( 1 - \sum_{i=1}^m w_i^2/d_i\bigg)\, 
     \Big( 1 + \vvec^T(D - \wvec\wvec^T)^{-1}\, \vvec\Big).
\end{align*}
Then (\ref{eq:SM}) implies that
\begin{align*}
1 + \vvec^T(D - \wvec\wvec^T)^{-1}\, \vvec &=
1 + \vvec^T\left(D^{-1} + \frac{D^{-1}\wvec\wvec^T D^{-1}}{1 - \sum_{i=1}^m w_i^2/d_i}
                  \right)\vvec\\
&= 1 + \sum_{i=1}^m v_i^2/d_i + \frac{\big(\sum_{i=1}^m v_iw_i/d_i\big)^2}
                                                 {1 - \sum_{i=1}^m w_i^2/d_i},
\end{align*}
and combining these expressions completes the proof.
\end{proof}

\bigskip

Since we will need them to calculate the Hessian, we note here that
the partial derivatives of $\varphi$ are given by
\begin{align} \frac{\partial\varphi}{\partial b_i} &= -2\log \binom{d}{k} + \log \x_i + (k-i)\log(\gamma_{\bvec})
 - (k-i)\log(d/2 -\gamma_{\bvec}) - \log b_i \nonumber \\
  & \hspace*{2cm} {} + 2\log(d/(2k) - \beta_{\bvec})
  - \log(1 - d/k + \beta_{\bvec}) \label{partials}
\end{align}
for $i=0,\ldots, d-k$.  
Direct substitution shows that the point $\bvec^*$, given in (\ref{bdef}),
is a stationary point.

\begin{lemma}
\label{lem:detH}
	Suppose that $4k-d-2 > (2k-d)^2$, 
and recall that $H_*$ denotes the Hessian of $\varphi$ at the point $\bvec^*$. 
Then 
\[ \operatorname{det}(-H_*) = \frac{2k^2}{(d-1)(2k-d)^2}\,
  \big( 4k-d - 2 - (2k-d)^2\big)\, \prod_{i=0}^{d-k} \frac{1}{b_i^*} > 0.
\]
\end{lemma}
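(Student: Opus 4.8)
The plan is to compute $H_*$ explicitly from the gradient formula (\ref{partials}), recognise its structure as a diagonal matrix perturbed by two rank-one terms, and then invoke Lemma~\ref{lem:gen-determinant}. First I would differentiate (\ref{partials}) once more, using $\partial\gamma_{\bvec}/\partial b_j = k-j$ and $\partial\beta_{\bvec}/\partial b_j = 1$, to obtain
\[
\frac{\partial^2\varphi}{\partial b_i\,\partial b_j} = -\frac{\delta_{ij}}{b_i} + (k-i)(k-j)\left(\frac{1}{\gamma_{\bvec}} + \frac{1}{d/2-\gamma_{\bvec}}\right) - \frac{2}{d/(2k)-\beta_{\bvec}} - \frac{1}{1-d/k+\beta_{\bvec}}.
\]
Evaluating at $\bvec^*$ and using $\gamma_{\bvec^*} = d/4$ and $\beta_{\bvec^*} = (d/(2k))^2$ from Lemma~\ref{lem:identities}, the bracketed factor equals $8/d$, while $d/(2k)-\beta_{\bvec^*} = d(2k-d)/(4k^2)$ and $1-d/k+\beta_{\bvec^*} = (2k-d)^2/(4k^2)$. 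Hence, writing $\Delta = \operatorname{diag}(1/b_0^*,\dots,1/b_{d-k}^*)$, $\uvec = (k,k-1,\dots,2k-d)^{\!\top}$ (so $\uvec_i = k-i$) and $\mathbf{1}$ for the all-ones vector,
\[
-H_* = \Delta - \frac{8}{d}\,\uvec\uvec^{\!\top} + s\,\mathbf{1}\mathbf{1}^{\!\top}, \qquad s := \frac{8k^2}{d(2k-d)} + \frac{4k^2}{(2k-d)^2}.
\]

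Next I would apply Lemma~\ref{lem:gen-determinant} with $D=\Delta$, $\vvec = \sqrt{s}\,\mathbf{1}$ and $\wvec = \sqrt{8/d}\,\uvec$, so that $d_i = 1/b_i^*$. The three scalars appearing in that lemma are then $\sum_i v_i^2/d_i = s\,\beta_{\bvec^*}$, $\sum_i w_i^2/d_i = \frac{8}{d}\sum_i (k-i)^2 b_i^*$ and $\sum_i v_iw_i/d_i = \sqrt{8s/d}\,\gamma_{\bvec^*}$. Substituting $\beta_{\bvec^*}$, $\gamma_{\bvec^*}$ and the value $\sum_i (k-i)^2 b_i^* = d/4 + d(k-1)^2/(4(d-1))$ from Lemma~\ref{lem:identities} and simplifying yields
\[
1 + \sum_i v_i^2/d_i = \frac{4k^2}{(2k-d)^2},\qquad 1 - \sum_i w_i^2/d_i = -\frac{(d-1)+2(k-1)^2}{d-1},\qquad \Bigl(\sum_i v_iw_i/d_i\Bigr)^2 = \frac{2k^2(4k-d)}{(2k-d)^2}.
\]
Feeding these into Lemma~\ref{lem:gen-determinant} and carrying out the elementary (if slightly lengthy) algebra collapses the bracket to $\frac{2k^2}{(d-1)(2k-d)^2}\bigl(4k-d-2-(2k-d)^2\bigr)$; multiplying by $\prod_i d_i = \prod_i 1/b_i^*$ gives the claimed identity, and positivity is immediate from the hypothesis $4k-d-2 > (2k-d)^2$.

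For the final assertion I would first verify that $\bvec^*$ lies in the interior of $K$: all $b_i^* > 0$ by (\ref{bdef}), and $(d-k)/k < \beta_{\bvec^*} = (d/(2k))^2 < d/(2k)$, where the right inequality holds because $k>d/2$ and the left is equivalent to $(d-2k)^2>0$. Since $\bvec^*$ is a stationary point of $\varphi$ (noted just after (\ref{partials})) and, by hypothesis, a global maximum of $\varphi$ over $K$, the second-order necessary condition at an interior maximum forces $H_*$ to be negative semidefinite. But $\det(-H_*)>0$ shows that $-H_*$ is nonsingular, hence has no zero eigenvalue; therefore all eigenvalues of $-H_*$ are strictly positive and $H_*$ is negative definite.

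I expect the main obstacle to be purely computational rather than conceptual: the determinant simplification, and in particular checking that the cancellations genuinely collapse the expression to the clean factor $4k-d-2-(2k-d)^2$ (rather than something messier), is the most error-prone part and must be carried out with care; the appeal to the second-order necessary condition to upgrade ``semidefinite'' to ``definite'' is routine.
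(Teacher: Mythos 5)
Your proposal is correct and takes essentially the same route as the paper: differentiate (\ref{partials}) once more, evaluate the Hessian at $\bvec^*$ using Lemma~\ref{lem:identities} to get $-H_* = D + \vvec\vvec^{\!\top} - \wvec\wvec^{\!\top}$ with $D=\operatorname{diag}(1/b_i^*)$, $\vvec$ a constant vector and $\wvec$ proportional to $(k-i)_i$, then apply Lemma~\ref{lem:gen-determinant} and simplify; your constant $s$ collapses to $\frac{4k^2(4k-d)}{d(2k-d)^2}$, matching the paper's choice of $v_i$, and your three bracket scalars agree with the paper's. Your treatment of negative definiteness (interior stationary global maximum gives semidefiniteness, then $\det(-H_*)>0$ upgrades to definiteness) is the same idea as the paper's, with the welcome extra of explicitly verifying $\bvec^*\in K^\circ$.
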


\begin{proof}
Recall that $\beta_{\bvec}$, $\gamma_{\bvec}$ are functions of
$\bvec$.
Differentiating (\ref{partials}) again gives
\[ \frac{\partial^2\varphi}{\partial b_i\, \partial b_j} 
     = \frac{(k-i)(k-j)}{\gamma_{\bvec}}  
          +  \frac{(k-i)(k-j)}{d/2 - \gamma_{\bvec}}
   - \frac{\mathbbm{1}(i=j)}{b_i} - \frac{2}{d/(2k) - \beta_{\bvec}}
    - \frac{1}{1-d/k + \beta_{\bvec}}
\] 
where the indicator function $\mathbbm{1}(i=j)$ equals 1 if $i=j$,
and equals zero otherwise.
Substituting $\bvec=\bvec^*$ and using Lemma~\ref{lem:identities},
it follows that
\[ -H_* = D + \vvec\vvec^T - \wvec\wvec^T\]
where $D$ is a $(d-k+1)\times (d-k+1)$ diagonal matrix with diagonal entries 
$d_i = 1/b_i^*$ for $i=0,\ldots, d-k$,
and the entries of $\vvec$ and $\wvec$ are given by
\[ v_i = \frac{2k}{2k-d}\, \sqrt{\frac{4k-d}{d}},\qquad
   w_i =  (k-i)\, \sqrt{8/d} \qquad \text{ for $i=0,\ldots, d-k$}.\]
Using Lemma~\ref{lem:identities} we calculate
\[
\sum_{i=0}^{d-k} v_i^2/d_i = \frac{d(4k-d)}{(2k-d)^2},\qquad
 \sum_{i=0}^{d-k} v_iw_i/d_i = \frac{k\sqrt{2(4k-d)}}{2k-d},\qquad
   \sum_{i=0}^{d-k} w_i^2/d_i = 2 + \frac{2(k-1)^2}{d-1}. 
\]
The claimed value for $\det(-H_*)$ is obtained by applying Lemma~\ref{lem:gen-determinant} 
and simplifying, and the lemma assumption implies that $\det(-H_*) > 0$.
\end{proof}

\subsection{Proof of Theorem~\ref{thm:main2} and Theorem~\ref{thm:main} }\label{ss:consolidation}

  The result for $(d,k)=(4,3)$  is established in~\cite{DP}. In the following  we assume that $d\geq 5$ and $k>d/2$, and show that the conditions of Theorem~\ref{thm:subgraph} hold under the assumptions of Theorem~\ref{thm:main2} and Theorem~\ref{thm:main}, where the random variables $Y$, $X_j$ are defined in 
  Section~\ref{s:structure}.

Condition (A1) follows from Bollob{\' a}s~\cite{bollobas1980probabilistic} with $\lambda_j= (d-1)^j/(2j)$ for $j\geq 1$,
while condition (A2) is established in Lemma~\ref{lem:all-cycles} with $\delta_j$ defined in (\ref{eq:delta}).
Next, (\ref{eq:P2}) holds by definition of $\kSSCM{d}$ if $k \leq \kSSCM{d}$,
	and by direct computation if $k \leq d/2 + \max\left\{1, \dfrac{\log d}{6}\right\}$.
	Hence the assumptions of Lemma~\ref{lem:magic-number} are
	satisfied, which implies that condition (A3) holds. 

To show that condition (A4) holds 
we will apply Lemma~\ref{lemma:laplace}. First we check the assumptions of
this lemma:
\begin{enumerate}
\item[(i)] Let $\mathcal{L} = \mathbb{Z}^{d-k+1}$, which is a lattice with rank $m = d-k+1$ and $\det(\mathcal{L}) = 1$.
\item[(ii)] The domain $K$ is compact and convex with a non-empty interior.
\item[(iii)] The function $\varphi: K \to \mathbb{R}$ is continuous, and has a unique global maximum $\bvec^*$,  by Lemma~\ref{lem:global-max}. 
Note that each value of $k$ in the range $d/2 < k \leq \kSSCM{d}$ is
		covered by assumption (I) or assumption (II).
	\item[(iv)] The function $\varphi:K \to \mathbb{R}$ is twice differentiable in the interior of $K$. Furthermore, the point $\bvec^*$ is the unique global
	maximum of $\varphi$ on $K$, by Lemma~\ref{lem:global-max}.
	Lemma~\ref{lem:detH} implies that $H_*$ is nonsingular, 
	using the fact (proved above) that (\ref{eq:P2}) holds. 
	Hence $H_*$ is strictly negative definite.
\item[(v)] Let $K_1$ be an open ball around $\bvec^*$ with radius small enough
to guarantee that $K_1 \subset K$. 
Then the function $\psi: K_1 \to \mathbb{R}$ is a continuous function 
and $\psi(\bvec^*) > 0$, by Lemma~\ref{lem:identities}.
\item[(vi)] Let $\ell_n$ be the zero vector in $\mathbb{Z}^{d-k+1}$, for each $n$. 
\item[(vii)] This condition follows from (\ref{JnB-stirling}).
\end{enumerate}
Thus, we can apply Lemma \ref{lemma:laplace} to see that
\begin{align*}
\E Y^2 &\sim \frac{(2\pi)^{(d-k+1)/2} \,\, \psi(\bvec^*)}{\det(\mathcal{L})\, \det(-H_*)^{1/2}} \,\, A_n \, n^{(d-k+1)/2} \, e^{n\, \varphi(\bvec^*)} \\
&= 
 \frac{k^2\, \sqrt{d-1}}{(2k-d)\, \sqrt{4k-d-2-(2k-d)^2}}\,\,
    \left(\frac{\binom{d}{k}^d\, k^{2k}}{2^{k(d-2)}\, d^d\, (2k-d)^{2k-d}}\right)^{n/k}.
\end{align*}
Dividing by $(\E Y)^2$, using (\ref{expectation}), proves that
\[
\frac{\E Y^2}{(\E Y)^2} \sim \sqrt{\frac{d-1}{4k-d-2-(2k-d)^2}}.
\]
This shows that condition (A4) of Theorem~\ref{thm:subgraph} holds,
by Lemma~\ref{lem:magic-number}.
	
Since $\delta_j > -1$ for all $j\geq 1$, we conclude from Theorem~\ref{thm:subgraph} that a.a.s.\ $Y > 0$. By (\ref{negative-useful}), this completes the proof of Theorem~\ref{thm:main} and Theorem~\ref{thm:main2}.

\section{Unique global maximum }\label{s:maximisation}

In this section we prove Lemma~\ref{lem:global-max},   
showing that $\bvec^*$ is the unique global maximum of $\varphi$ in $K$.

\subsection{No global maxima on the boundary}\label{s:proof-Lemma4.2}

First we consider the boundary of the domain $K$.

\begin{lemma}
	Let the assumptions of Lemma~\emph{\ref{lem:global-max}} hold.
	Suppose that we know that $\xvec^*$ is the unique maximum of $\varphi$
	in the interior of $K$.  Then $\varphi(\bvec^*) > \varphi(\bvec)$ for all
	points $\bvec$ on the boundary of $K$. 
	\label{lem:boundary}
\end{lemma}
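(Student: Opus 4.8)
The plan is to analyze the boundary $\partial K$ of the simplex-like domain $K$, which decomposes into faces of two types: the ``coordinate faces'' where some $b_i = 0$, and the two ``slab faces'' where $\beta_{\bvec} = \sum_i b_i$ equals either $\frac{d-k}{k}$ (the lower bound) or $\frac{d}{2k}$ (the upper bound). For the coordinate faces, I would argue that $\varphi$ tends to $-\infty$ in a neighbourhood of such a face, or at least cannot attain the value $\varphi(\bvec^*)$ there: the term $-g(b_i) = -b_i\log b_i$ behaves nicely as $b_i \to 0$, but crucially the term $b_i \log \x_i$ together with the surrounding structure means that setting $b_i = 0$ forces the corresponding ``interaction class'' to be empty, and one shows directly that the restricted maximum over such a face is strictly below $\varphi(\bvec^*)$. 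The cleanest route is probably to observe that $\bvec^*$ has all coordinates $b_i^* > 0$ strictly (since $\x_i > 0$ for all $0 \le i \le d-k$), so $\bvec^*$ is interior with respect to the coordinate constraints; hence any boundary point with some $b_i = 0$ is, by the assumed uniqueness of $\bvec^*$ as the interior maximum, strictly worse — but this needs care because a face of $K$ is not the interior of $K$.

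The key reduction I would use is the following: since $\varphi$ depends on $\bvec$ through the three quantities $\beta_{\bvec}$, $\gamma_{\bvec}$, and the individual $b_i$'s, and since the ``interaction'' part $\sum_i b_i \log \x_i - \sum_i g(b_i)$ is strictly concave in $\bvec$ (being a sum of strictly concave functions of single coordinates, up to a linear term) while the remaining terms depend only on the two linear functionals $\beta_{\bvec}, \gamma_{\bvec}$, the whole function $\varphi$ is concave along any segment on which $(\beta_{\bvec},\gamma_{\bvec})$ is held fixed. More usefully, for fixed values $(\beta,\gamma)$ one can solve the inner optimization explicitly: maximizing $\sum_i b_i\log\x_i - \sum_i b_i\log b_i$ subject to $\sum_i b_i = \beta$, $\sum_i(k-i)b_i = \gamma$ is a standard entropy-maximization giving $b_i \propto \x_i\, z^{k-i}$ for a Lagrange multiplier $z$, with all $b_i > 0$. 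This shows the true maximizer over any region always has strictly positive coordinates, so the coordinate faces contribute nothing. It remains to handle the two slab faces $\beta_{\bvec} \in \{\frac{d-k}{k}, \frac{d}{2k}\}$. On these, again one can pass to the reduced problem in $(\beta,\gamma)$ and check that the reduced objective, restricted to $\beta = \frac{d-k}{k}$ or $\beta = \frac{d}{2k}$, has its maximum strictly below $\varphi(\bvec^*)$; at $\beta = \frac{d}{2k}$ several terms like $g(d/(2k) - \beta_{\bvec})$ and $g(1 - d/k + \beta_{\bvec})$ degenerate (arguments hitting $0$), and one must verify the limiting value is finite and strictly smaller, while at $\beta = \frac{d-k}{k}$ one has $\gamma_{\bvec} \le k\beta_{\bvec} = d-k < d/2$ forcing a boundary situation that can be compared directly against $\varphi(\bvec^*)$ (where $\beta_{\bvec^*} = (d/(2k))^2$ lies strictly between $\frac{d-k}{k}$ and $\frac{d}{2k}$ for $k > d/2$, a fact worth checking at the outset).

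So concretely the steps are: (1) verify $\bvec^* \in K^\circ$, i.e. $\frac{d-k}{k} < (d/(2k))^2 < \frac{d}{2k}$ and all $b_i^* > 0$; (2) reduce the boundary analysis to the three functionals by the entropy-maximization argument, concluding the coordinate faces $\{b_i = 0\}$ are dominated; (3) on the slab face $\beta_{\bvec} = \frac{d}{2k}$, take the limit, check the degenerate terms vanish or stay bounded, and show the face maximum is $< \varphi(\bvec^*)$ — this reduces to a one-variable inequality in $\gamma$ (or in the multiplier $z$); (4) do the same on $\beta_{\bvec} = \frac{d-k}{k}$. The main obstacle is step (3)/(4): the slab faces are genuinely codimension-one sets on which $\varphi$ is not simply ``$-\infty$'', so one has to produce an honest strict inequality, and the degeneracies at $\beta = \frac{d}{2k}$ mean some terms of $\varphi$ must be re-expanded carefully before the comparison. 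I expect the comparison can be made by exhibiting, for each boundary point, a nearby interior point with strictly larger $\varphi$-value (moving inward along the gradient of $-\beta_{\bvec}$ or $+\beta_{\bvec}$ as appropriate), using that the partial derivatives in (\ref{partials}) have a definite sign near these faces; combined with the assumed uniqueness of the interior critical point, this forces the global maximum to be $\bvec^*$.
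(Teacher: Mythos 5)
Your general plan --- show that at boundary points the gradient of $\varphi$ has an unbounded component pointing into $K^\circ$, and combine this with the assumed uniqueness of the interior critical point --- is exactly the paper's strategy for almost all of $\partial K$. In the paper, the coordinate faces $\{b_\ell=0\}$ are handled by the $-\log b_\ell \to +\infty$ term in $\widehat\varphi'(0)$ after perturbing $(b_\ell,b_j)\mapsto(\varepsilon,b_j-\varepsilon)$, the face $\beta_{\bvec}=(d-k)/k$ by the $-\log(1-d/k+\beta_{\bvec})\to +\infty$ term, and most of the face $\beta_{\bvec}=d/(2k)$ by the $+2\log(d/(2k)-\beta_{\bvec})\to -\infty$ term in $\partial\varphi/\partial b_i$; all of this is captured, at least in spirit, by your ``move inward along the gradient'' idea. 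Your entropy-maximization reduction to the pair $(\beta,\gamma)$ is a reasonable alternative framing but isn't needed and isn't what the paper does.

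The genuine gap is the corner point $\avec = \bigl(\tfrac{d}{2k},0,\dots,0\bigr)$, and it is not a soft obstacle. At $\avec$ one has simultaneously $\beta_{\avec}=\tfrac{d}{2k}$, $\gamma_{\avec}=\tfrac{d}{2}$, and $b_i=0$ for $i\ge 1$, so \emph{all} the logarithmic singularities in (\ref{partials}) fire at once with competing signs (an $\infty-\infty$ situation). The inward-gradient argument you propose has no definite sign there and breaks down; moreover $\varphi(\avec)$ is finite, so this really is a candidate competitor for the global maximum. The paper therefore treats $\avec$ by a direct evaluation, and the resulting inequality $\varphi(\bvec^*)>\varphi(\avec)$ is \emph{exactly} the nontrivial condition (\ref{orientation-expectation-threshold}), i.e.\ the first-moment/expectation-threshold inequality proved in Lemma~\ref{L:k-star} (this is also where the hypotheses of Lemma~\ref{lem:global-max} enter). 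You flag steps (3)--(4) as ``the main obstacle'' but stop short of identifying that the whole face $\beta_{\bvec}=\tfrac{d}{2k}$ reduces (via the derivative argument) to the single point $\avec$, and that the comparison at $\avec$ is not a local or limiting calculation but a substantive global inequality tied to $\E Y\to\infty$. Without that identification the proof is not complete.
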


\begin{proof}
	First we consider the point $\avec = (\nfrac{d}{2k},0,\ldots, 0)$;
	that is, $a_{0}=\nfrac{d}{2k}$ and all other $a_i$ are zero.  
	Then 
	$\beta_{\avec} = \dfrac{d}{2k}$ and $\gamma_{\avec} = \dfrac{d}{2}$.
	Plugging these values into $\varphi$, we find that 
	\[
	\varphi(\avec) = \frac{d(k-1)}{k}\log(d) + \log(k) - \frac{2k-d}{2k}\log(2k-d)
	- \frac{(d-2)}{2}\log 2 - \frac{d}{2k}\log\binom{d}{k},
	\]
	using the fact that $\x_{0}= \binom{d}{k}$.
	Then $\varphi(\bvec^*) > \varphi(\avec)$ if and only if 
	\[ d\, \log \binom{d}{k} + 2k\log k > d\log d + k(d-2)\log 2 + (2k-d)\log(2k-d).\]
	But this inequality is equivalent to 
	(\ref{orientation-expectation-threshold}). 
	Note that $\kSSCM{d}\leq \kind{d}$ by definition, and $\lceil \frac{d+1}{2}\rceil \leq \kind{d}$ since (\ref{eq:k+}) holds for this value of $k$, by
	direct computation. Furthermore, if $d\geq 404 > e^6$, then
	\[ d/2 + \dfrac{\log d}{6} \leq \kind{d}.\]
(To see this, note that (\ref{eq:k+}) holds when $k=1.01 d/2$ and
$\frac{\log d}{6} < d/200$ when $d\geq 404$.) 
	Hence the conditions of Lemma~\ref{L:k-star} hold if either
	(I) or (II) hold, so (\ref{orientation-expectation-threshold}) holds
	and hence $\avec$ is not a global maximum of $\varphi$.
	
	Next, suppose that $\bvec\neq\avec$ satisfies
	$\sum_{i=0}^{d-k} b_i= \frac{d}{2k}$.  
	Then each of $\gamma_{\bvec}$, $\dfrac{d}{2}-\gamma_{\bvec}$
	and $1-\dfrac{d}{k} + \beta_{\bvec}$ are positive, noting that
	$\gamma_{\yvec}=\dfrac{d}{2}$ if and only if $\yvec=\avec$.
	Choose $i$ such that $b_i > 0$.
	In the expression for $\partial \varphi/(\partial b_i)$,
	every term is bounded except for the term corresponding to
	$\dfrac{d}{2k}-\beta_{\bvec}$, and this term contributes $-\infty$.
	Hence reducing $b_i$ slightly while holding all other entries steady
	will lead to an increase in $\varphi$, showing that no point on the
	boundary $\sum_{i=0}^{d-k} b_i = \dfrac{d}{2k}$ can be a local maximum of $\varphi$.
	
	Now, suppose that $\dfrac{d-k}{k} < \sum_{i=0}^{d-k} b_i < \dfrac{d}{2k}$ and 
	$b_{\ell}=0$ for some 
	$\ell\in \{0,\ldots, d-k\}$.  Take some $j\in \{0,\ldots, d-k\}$ such that
	$b_j > 0$, and replace $(b_\ell,b_j)$ by $(b_\ell + \varepsilon,b_j-\varepsilon)
	= (\varepsilon, b_j-\varepsilon)$.  This leaves $\beta_{\bvec}$ unchanged.
	Let $\widehat{\varphi}(\varepsilon) = \varphi(\bvec + \varepsilon\, \evec_\ell 
	- \varepsilon\, \evec_j)$, where $\evec_i$ denotes the $i$th standard
	basis vector for $i=\ell,j$.
	By the chain rule, 
	\[ \widehat{\varphi}'(0) = 
	\log \x_\ell - \log \x_j + (\ell-j) \log(\gamma_{\bvec}) - (\ell-j)\log(d/2-\gamma_{\bvec}) - \log(b_\ell) + \log(b_j).
	\]
	The only unbounded term is $-\log b_\ell$, which equals $+\infty$.  Therefore,
	replacing $\bvec$ by $\bvec + \varepsilon \evec_\ell - \varepsilon \evec_j$
	gives a new vector in $K$ with a higher value of $\varphi$, for some
	sufficiently small $\varepsilon > 0$. This proves that no vector in $K$
	with a zero entry can be a local maximum of $\varphi$.
	
	Finally, suppose that $\bvec$ satisfies $\sum_{i=0}^{d-k} b_i = \dfrac{d-k}{k}$
	and that all entries of $\bvec$ are positive. Then   the only unbounded term
	in $\partial \varphi/(\partial b_1)$ is $-\log(1-\dfrac{d}{k} + \beta_{\bvec})$, 
	which equals
	$+\infty$.  Hence increasing $b_1$ slightly leads to a new vector which
	lies inside $K$ and has a higher value of $\varphi$.  Therefore the boundary
	given by $\sum_{i=0}^{d-k} b_i = \dfrac{d-k}{k}$ does not contain any local 
	maximum of $\varphi$.  This completes the proof.
\end{proof}

\subsection{Stationary points}\label{ss:interior}

In this section we write $\mu=\binom{d}{k}$.
Setting all partial derivatives equal to zero using (\ref{partials}), we find that at any
stationary point of $\varphi$ in the interior of $K$,
\begin{equation}
\label{reformulate}
   b_i = \frac{\x_i\, \big(\frac{d}{2k}-\beta\big)^2 }
              {\mu^2\, \big(1-\frac{d}{k} + \beta\big)}\, 
	          \left(\frac{\gamma}{d/2-\gamma}\right)^{k-i}
\end{equation}
for $i=0,\ldots, d-k$.   Here $(\beta,\gamma) = (\beta_{\bvec},\gamma_{\bvec})$.
Alternatively, we may treat $\beta,\gamma$ as the variables and use
(\ref{reformulate}) to define the values of $b_i$, whenever $\bvec$ is
a stationary point of $\varphi$ in the interior of $K$.

Next we write $\beta$, $\gamma$ in terms of a single variable $x$.
Define $x=\frac{\gamma}{d/2-\gamma}$. Rearranging for $\gamma$ gives
\begin{equation}
\label{new-gamma}
   \gamma = \gamma(x) = \frac{xd}{2(x+1)}.
\end{equation}  
Observe that $\gamma^* = \gamma(1)$.
The polynomial $f$ defined in (\ref{expan0}) can also be written as
\[ f(x) = \sum_{i=0}^{d-k} \frac{\x_i}{\mu^2}\, x^{k-i}. \]
Summing (\ref{reformulate}), it follows that at any
stationary point, 
\begin{equation}
\label{beta-identity}
    \beta =\beta(x) = \frac{\big(\frac{d}{2k}-\beta\big)^2}
              {\big(1-\frac{d}{k} + \beta\big)} \, f(x).
\end{equation}
Note that $\beta^* = \beta(1)$.
Next, using $\gamma = \sum_{i=0}^{d-k} (k-i)b_i$ and substituting
(\ref{reformulate}) and (\ref{new-gamma}) into (\ref{beta-identity}) gives
\begin{align*}
 \frac{xd}{2(x+1)}  &= 
   \frac{\big(\frac{d}{2k}-\beta\big)^2}
              {\big(1-\frac{d}{k} + \beta\big)} \, 
    \sum_{i=0}^{d-k} \frac{(k-i)\, \x_i}{\mu^2}\, x^{k-i}\\
  &= 
   \frac{\big(\frac{d}{2k}-\beta\big)^2}
              {\big(1-\frac{d}{k} + \beta\big)} \, 
  x f'(x)\\
  &= \frac{\beta x f'(x)}{f(x)},
\end{align*}
using (\ref{beta-identity}) for the final equality.
Dividing through by $x$, we obtain the identity
\begin{equation}
\label{f-identity}
   \frac{d}{2(x+1)} = \frac{\beta f'(x)}{f(x)}.
\end{equation}
We want to show that this identity has a unique solution on $(0,\infty)$ at $x=1$.

Let
\[ y=y(x)=\frac{(x+1)f'(x)}{k}.\]
Then (\ref{f-identity}) says that
$\beta = \frac{d}{2k}\cdot \frac{f}{y}$, and we need $\frac{d-k}{k} < \beta < \frac{d}{2k}$ for the
corresponding vector $\bvec$ to belong to $K^\circ$.  Therefore
we are interested in values of $x$ for which 
\[ \frac{2(d-k)}{d}\, y < f < y.\]
  We can rewrite (\ref{beta-identity}) as
   $(y-f)^2 = f - \frac{2(d-k)}{d}\, y$, or in terms of $x$:
   \begin{equation}\label{eq1}
	   \left(\frac{(x+1)f'}{k}-f\right)^2 = f -  \frac{2(d-k)\, (x+1) \, f'}{d k}.
   \end{equation}
 We need to show that $x=1$ is the unique solution of (\ref{eq1})
     on the interval $(0,1]$ under the constraints $\frac{2(d-k)}{d}\, y < f < y$.
   
Next, we solve~\eqref{eq1} as a quadratic equation in $y$.
We can rule out one of the branches using the constraint $y > f$,
leading to  
\[
 y =  f - \frac{d-k}{d} + \frac{\sqrt{(d-k)^2 + d(2k-d)\, f}}{d} = (1 +\eta) f
\]
where $\eta=\eta(x)$ is defined in (\ref{eq:eta-def}).
It suffices to prove that the equation 
\begin{equation}  
\frac{(x+1)\, f'(x)}{k} = (1 + \eta(x))\, f(x)
\label{diff-eq2}
\end{equation}
has a unique solution on $(0,\infty)$ at $x=1$.   In the following subsections, we give  two separate arguments under assumptions (I) and (II).  
First we collect some useful identities and facts about $f$.

Note that $f$ is an increasing function of $x$ when $x>0$, and hence
$0< f(x) < 1$ when $x\in (0,1)$.
We will also use the expansion of $f$ around $1$, which can be
obtained using inclusion-exclusion (see Wilf~\cite[Section 4.2]{Wilf}):
\begin{equation} \label{expan1}
	f(x) = \mu^{-1}\sum_{j=0}^{k} \binom{k}{j} \binom{d-j}{k-j}  (x-1)^j.
\end{equation}
For later use, observe that (\ref{expan1}) gives 
\begin{equation}
	\label{eq:f-derivs}
	f(1) = 1,\qquad f'(1) = \frac{k^2}{d},\qquad f''(1) = \frac{k^2(k-1)^2}{d(d-1)},
\end{equation}
and 
\begin{align}
	y(x) &=  \frac{(x-1) f'(x)}{k} + \frac{2 f'(x)}{k} \nonumber \\
	&=   \mu^{-1}\sum_{j=0}^k 
	\binom{k}{j} \binom{d-j}{k-j} \left(\frac{j}{k} + \frac{2(k-j)^2}{k(d-j)} \right) (x-1)^j.\label{eq:y}
\end{align}

\subsection{Proof of Lemma \ref{lem:global-max}. Part (I)}

Here, we prove  Lemma \ref{lem:global-max}  under assumption (I), which is  
 $d/2 +1<k \leq \kSSCM{d}$.  From Table~\ref{table:values}, we find that 
 $d \geq 9$. Then inequality~\eqref{eq:P2} implies that 
 $k \leq (d + 1 + \sqrt{d-1})/2$, and hence $3k \leq 2d$.
Using condition \eqref{eq:P1} and Lemma~\ref{lem:boundary}, to complete
the proof of  Lemma \ref{lem:global-max} we must show that the equation
\eqref{diff-eq2} has no solution on 
$ 
\left( 0, \left(1+\frac{(2k-d)^2 d}{k (d-k)(4k-d-2-(2k-d)^2)}\right)^{-1}\right)
\cup \left(\dfrac{5k-2d}{d-k},\infty\right)$.   
First we consider the large values of $x$.

  \begin{lemma}\label{L1}
  Suppose that $d\geq 9$ and $\dfrac d 2  +1 < k\leq \dfrac {2d}{3}$.
 If  $x\geq  \dfrac{5k-2d}{d-k}$, then $(x+1)f'< kf$. 
  \end{lemma}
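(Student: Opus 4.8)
\emph{Plan.} I would recast the claim as $P(x)>0$ for $x\ge x_0$, where $P(x):=kf(x)-(x+1)f'(x)$ and $x_0:=\dfrac{5k-2d}{d-k}$, and work with $P$ throughout. The first step is to compute the coefficients of $P$ explicitly: writing $c_m:=\binom dk^{-1}\binom km\binom{d-k}{m}$ for the coefficient of $x^{k-m}$ in $f$, a one-line rearrangement gives $P(x)=\sum_{m=1}^{d-k+1}\bigl(mc_m-(k-m+1)c_{m-1}\bigr)x^{k-m}$, and the identities $m\binom km=(k-m+1)\binom k{m-1}=k\binom{k-1}{m-1}$ turn the coefficient of $x^{k-m}$ into $\dfrac{k}{\binom dk}\binom{k-1}{m-1}\Bigl(\binom{d-k}{m}-\binom{d-k}{m-1}\Bigr)$.

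Second, I would read off the sign pattern. Since $d<2k$ gives $d-k\le k-1$ we have $\binom{k-1}{m-1}>0$ for every $m$ in range, and since $\binom{d-k}{\,\cdot\,}$ is unimodal with peak at $(d-k)/2$, the coefficient of $x^{k-m}$ in $P$ is strictly positive for $m<\tfrac{d-k+1}{2}$, is zero for at most one value of $m$, and is strictly negative for $m>\tfrac{d-k+1}{2}$. Thus $P(x)/x^{2k-d-1}$ is a genuine polynomial of degree $d-k$, its leading coefficient $\tfrac{k(d-k-1)}{\binom dk}$ is positive (here $k\le 2d/3\le d-3$ is used), its constant term $-\tfrac{k}{\binom dk}\binom{k-1}{d-k}$ is negative, and its coefficient sequence has exactly one sign change. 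By Descartes' rule of signs $P$ has a single positive root $\rho$, with $P<0$ on $(0,\rho)$ and $P>0$ on $(\rho,\infty)$. Hence Lemma~\ref{L1} reduces to the one inequality $P(x_0)>0$, which forces $\rho<x_0$ and so $P>0$ on all of $[x_0,\infty)$.

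For the remaining inequality $P(x_0)>0$ I would use the alternative form $P(x)=\sum_e a_e x^{e-1}\bigl[(k-e)x-e\bigr]$, the sum running over the support $\{2k-d,\dots,k\}$ of the hypergeometric coefficients $a_e$ of $f$. Substituting $x_0$ and using $(k-e)x_0-e=\dfrac{k(5k-2d)-e(4k-d)}{d-k}$, the claim $P(x_0)>0$ collapses, after clearing positive factors, to
\[
k(d-k)\,f(x_0)\;>\;(4k-d)\,f'(x_0).
\]
Here I would substitute the second-order identity $x(1-x)f''+\bigl[(2k-1)x-(2k-d-1)\bigr]f'-k^2f=0$ (which $f$ satisfies, as follows from the coefficient recurrence behind \eqref{expan1}) to eliminate $f$; at $x=x_0$ both sides reduce to negative multiples of $x_0(x_0-1)f''(x_0)$ and of $f'(x_0)$, and after simplification the inequality is equivalent to $\dfrac{x_0f''(x_0)}{f'(x_0)}<\dfrac{4k-d-3}{3}$, i.e.\ $\E[H_0]<\dfrac{4k-d}{3}$ where $\Pr(H_0=e)\propto e\,a_e\,x_0^{e}$. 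This I would verify using the Taylor expansion \eqref{expan1} of $f$ about $1$ with the values \eqref{eq:f-derivs}, or by bounding the binomial sum for $P(x_0)$ directly, grouping its terms around the sign change of $(k-e)x_0-e$.

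The first two steps are routine bookkeeping; the real obstacle is the last inequality $k(d-k)f(x_0)>(4k-d)f'(x_0)$. The threshold $x_0$ is essentially tight — as $k$ decreases toward $d/2$ one has $x_0\to1$ while both sides tend to $\tfrac{k(d-k)}{d}$ from the correct sides — so the crude bounds $\tfrac{xf''}{f'}\le d-k-1$ and $\tfrac{xf'}{f}\le k$ fall just short, and one must control the first two moments of the tilted distribution (equivalently the first three Taylor coefficients of $f$ at $1$) with some care.
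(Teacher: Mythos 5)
Your first two steps are correct, and they contain a genuinely nice structural observation that the paper does not make: writing $P=kf-(x+1)f'$ in the monomial basis, the coefficient of $x^{k-m}$ is $\frac{k}{\binom dk}\binom{k-1}{m-1}\bigl(\binom{d-k}{m}-\binom{d-k}{m-1}\bigr)$, so by unimodality of $\binom{d-k}{\,\cdot\,}$ the coefficient sequence has exactly one sign change, Descartes' rule gives a unique positive root $\rho$, and the lemma reduces to the single inequality $P(x_0)>0$ at $x_0=\frac{5k-2d}{d-k}$. Your algebra converting $P(x_0)>0$ into $k(d-k)f(x_0)>(4k-d)f'(x_0)$ is also correct.

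The gap is that this last inequality --- which you yourself call ``the real obstacle'' --- is never proved, and it is exactly where all the content of the lemma lies. As your own tightness remark shows, at $x=1$ the comparison goes the wrong way ($k(d-k)<(4k-d)k^2/d$ is equivalent to $d<2k$), so the claim at $x_0=1+\frac{3(2k-d)}{d-k}$ is a delicate second-order statement, and the two routes you sketch (the hypergeometric ODE plus a bound on the mean of a size-biased tilted distribution, or ``grouping terms around the sign change'') are stated as intentions rather than carried out. The paper's proof consists almost entirely of executing the second of these: it expands $f-(x+1)f'/k$ about $x=1$, where the coefficient of $(x-1)^j$ is a positive quantity times $(d-2k+j)$, and pairs the negative term $j=2k-d-\ell$ with the positive term $j=2k-d+\ell$; the quantitative heart is the ratio estimate (\ref{eq:4cs}), valid precisely because $(x-1)^2\ge \frac{9(2k-d)^2}{(d-k)^2}$, together with the separate treatment (\ref{eq:cases}) of the extreme pair $\ell\in\{2k-d,2k-d-1\}$, which itself requires a small case analysis on $2k-d$. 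Nothing in your proposal substitutes for these estimates, so the proof is incomplete. Your Descartes reduction would be a worthwhile simplification of the \emph{structure} of the argument (one point instead of a half-line), but you still need to supply the binomial estimates at $x_0$.
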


  \begin{proof}
	  Using (\ref{expan1}) and \eqref{eq:y}, we can write
        \begin{align}
        	f - \frac{(x+1)f'}{k} 
        	 &=  \mu^{-1}   \sum_{j=0}^k 
        	\binom{k}{j} \binom{d-j}{k-j} \left(1-\frac{j}{k} - \frac{2(k-j)^2}{k(d-j)} \right) (x-1)^j\label{eq:f-middle}\\
		 &=  \frac{1}{k \mu}   \sum_{j=0}^k 
		\binom{k}{j} \binom{d-j-1}{k-j-1} \, (d-2k+j)\, (x-1)^j\nonumber\\
        	 &\geq      \sum_{\ell =1}^{2k-d}   \ell (x-1)^{2k-d} 
        	 \left( c_{2k-d+\ell} (x-1)^{\ell} -  c_{2k-d-\ell} (x-1)^{-\ell}\right)
		 \label{eq:paired-sum}  
        \end{align}
        where $c_j = \frac{1}{\mu k} \binom{k}{j} \binom{d-j-1}{k-j-1}$ for $j=0,\ldots, k$.  
	For the inequality, note that our assumptions imply that
	  $4k-2d\leq k$, so the right hand side of (\ref{eq:paired-sum}) is
	  a sum of the $j=0,\ldots, 4k-2d$ terms of (\ref{eq:f-middle}).
	  The omitted terms, with $4k-2d < j\leq k$, all
	  make a positive contribution to (\ref{eq:f-middle}).

We claim that
\begin{equation}
	\frac{c_{2k-d-\ell}\cdot c_{2k-d+\ell-1}}{c_{2k-d-\ell+1}\cdot c_{2k-d+\ell} } \leq  \frac{8}{9}(x-1)^2 \quad \text{ for $\ell = 1,\ldots, 2k-d-1$},
 \label{eq:4cs}
\end{equation}
	and note that this implies that
for all $\ell=1,\ldots, 2k-d-1$,
\begin{equation}\label{eq98} \frac{c_{2k-d+\ell}}{c_{2k-d-\ell}}\, (x-1)^{2\ell} \geq 
\frac98 \cdot \frac{c_{2k-d+\ell-1}}{c_{2k-d-\ell+1}}\, (x-1)^{2(\ell-1)} \geq \cdots \geq  \left(\frac98\right)^{\ell}.
\end{equation}
Hence, if the claim holds, then
all terms in (\ref{eq:paired-sum}) with $\ell \leq 2k-d-1$ are positive.
To bound the remaining term with $\ell = 2k-d$,   observe that 
\begin{align*}
		\frac{(2k-d) c_0}{(2k-d -1) c_1 (x-1)}  &=\frac{(2k-d)(d-1)}{(2k-d-1) k(k-1) (x-1)}  \leq \frac{(d-1)(d-k)}{3 k(k-1)(2k-d-1)},\\
		\frac{(2k-d) c_{4k-2d} (x-1) }{(2k-d -1) c_{4k-2d-1}}
		&= \frac{ (2d-3k+1)(2d-3k)(x-1)}{2(2k-d-1)  (3d-4k)}
		\geq \frac{3 (2d-3k) (2d-3k+1)}{2(d-k)(3d-4k)}.
\end{align*}
Under our assumptions, we have that 
\begin{equation}\label{eq:cases} 
	\left(1+\frac{3 (2d-3k) (2d-3k+1)}{2(d-k)(3d-4k)}\right)  \left(\frac{9}{8}\right)^2
	>1+ \frac{(d-1)(d-k)}{3 k(k-1)(2k-d-1)}.
\end{equation}
To see this, suppose first that $2k -d \geq 4$. Then bounding $d -1 \leq 2(k-1)$ and $d-k \leq k$,
the right-hand side of \eqref{eq:cases} is at most $1+ \frac{2}{9} < \left(\frac{9}{8}\right)^2$.  
Similarly, if  $2k-d = 3$  and $k \geq 7$, then we bound the right-hand side  of \eqref{eq:cases}
by $\frac43$ and estimate
\[
1 + \frac{3 (2d-3k)(2d-3k+1)}{2(d-k)(3d-4k)}    = 
1+ \frac{3 (k-6)(k-5)}{2(k-3)(2k-9)} \geq \frac{4}{3}
>	  \frac43 \left(\frac{8}{9}\right)^{2}.
\]
Here we use the fact that 
	  $3(k-6)(k-5)/\big(2(k-3)(2k-9)\big)$ is monotonically
	  increasing with $k$.
Finally, if  $2k-d = 3$  and $k = 6$,
then $d=9$ and direct substitution shows that  
	  the right-hand side of \eqref{eq:cases}  equals $1 + \frac{2}{15} <  \left(\frac{9}{8}\right)^2$.  Hence (\ref{eq:cases}) holds in all cases.

 Combining the terms in (\ref{eq:paired-sum}) for $\ell  \in \{2k-d, 2k-d-1\}$
 and using \eqref{eq98}, \eqref{eq:cases}, we see that
 \begin{align*}
  &\frac{1}{2k-d-1}  \sum_{\ell  \in \{2k-d, 2k-d-1\} }   \ell (x-1)^{2k-d} 
 	\left( c_{2k-d+\ell} (x-1)^{\ell} -  c_{2k-d-\ell} (x-1)^{-\ell}\right)
 	\\
 	&\geq    \left(1+\frac{3 (2d-3k)(2d-3k+1)}{2(d-k)(3d-4k)}\right)  c_{4k-2d-1} (x-1)^{4k-2d-1}
 	 \\  &\hspace{60mm}-   \left(1+ \frac{(d-1)(d-k)}{3 k(k-1)(2k-d-1)}\right)  c_{1} (x-1)\\
 	 &> \left(1+ \frac{(d-1)(d-k)}{3 k(k-1)(2k-d-1)}\right)   \left(
 	 \left(\frac89 \right)^2c_{4k-2d-1} (x-1)^{4k-2d-1} -  c_{1} (x-1)\right)
 	 \\ &\geq 0.
 	\end{align*}
 Thus, the sum in (\ref{eq:paired-sum})  is positive and the lemma is proved.

It remains to establish claim \eqref{eq:4cs}.
Using   $\frac{c_j}{c_{j+1}} = \frac{(j+1)(d-j-1)}{(k-j)(k-j-1)}$, we find that 
        \begin{align}
       \frac{c_{2k-d-\ell}}{c_{2k-d-\ell+1}}
       \cdot &
       \frac{c_{2k-d+\ell-1}}{c_{2k-d+\ell}} 
       \nonumber \\
      &=  \frac{(2(d-k)-\ell)(2k-d+\ell)}{(d-k)^2-\ell^2} \cdot 
        	 \frac{(2(d-k)+\ell-1)(2k-d-\ell+1)}{(d-k)^2 -(\ell-1)^2}\label{eq:4cs-simplified}.
\end{align}		
Since $3k \leq 2d$, it follows that   the denominators on the right hand
side of (\ref{eq:4cs-simplified}) are always positive for $\ell \leq 2k-d-1$.
Now
\begin{align*} 
      \frac{(2(d-k)+\ell-1)(2k-d-\ell+1)}{(d-k)^2-(\ell-1)^2} &\leq
\frac{(2(d-k)+\ell-1)(2k-d-\ell+1)}{(d-k)^2-\ell^2}\\
       &\leq \frac{2(2(d-k)+\ell)(2k-d-\ell)}{(d-k)^2-\ell^2}, 
       \end{align*}
and hence by (\ref{eq:4cs-simplified}),  we obtain
\[ 
       \frac{c_{2k-d-\ell}}{c_{2k-d-\ell+1}}
       \cdot 
       \frac{c_{2k-d+\ell-1}}{c_{2k-d+\ell}} \leq
\frac{2\big(4(d-k)^2 -\ell^2\big)\big((2k-d)^2-\ell^2\big)}{\big((d-k)^2 - \ell^2\big)^2}.\]
Next,  using $3k \leq 2d$, we observe that
\[
	\left( 1 - \frac{\ell^2}{4(d-k)^2}\right)\left( 1 - \frac{\ell^2}{(2k-d)^2}\right)
	\leq 1 - \frac{\ell^2}{(d-k)^2}.
\]
 Hence for $\ell=1,\ldots, 2k-d-1$, using the lower bound on $x$, we have
\[ 
\frac{2\big(4(d-k)^2 -\ell^2\big)\big((2k-d)^2-\ell^2\big)}{\big((d-k)^2 - \ell^2\big)^2}
\leq \frac{8(2k-d)^2}{(d-k)^2}   \leq \frac89 (x-1)^2.
\]
This completes the proof.
  \end{proof}

 To prove  Lemma \ref{lem:global-max}  under assumption (I), 
 it only remains to show that equation
 \eqref{diff-eq2} has no solution when 
 $0 <x < \left(1+\frac{d (2k-d)^2 }{k(d-k)(4k-d-2-(2k-d)^2)}\right)^{-1}$.   
 Since $\eta$ is monotonically decreasing and $\eta(0) = \frac{2k-d}{2(d-k)}$, it is sufficient to show that, for such $x$,
 \begin{equation}\label{eq:sufficient}
 		\frac{(x+1)f'}{k} \geq \frac{d}{2(d-k)} f.
 \end{equation}
 Let  $t = 1/x$ and  
 \begin{equation}\label{g-def}
 g(t) = f(t^{-1})t^k =\mu^{-1} \sum_{j=0}^{d-k} \binom{k}{j} \binom{d-k}{j} t^{j}.
 \end{equation}
 Note that $g(t)$ is the PGF of the hypergeometric distribution with parameters $(d,k,d-k)$. We will also use the expansion of 
 $g$ around 1, given by
 \begin{equation}\label{g-exp}
 	g(t) = \mu^{-1} \sum_{j=0}^{d-k}   \binom{k}{j} \binom{d-j}{k}(t-1)^j.
 \end{equation}
 Now
 \begin{equation}
 	\label{eq:ft-deriv}
 	f'(t^{-1}) = - \frac{g'(t)}{t^{k-2}} + \frac{k g(t)}{t^{k-1}}.
 \end{equation}
Rewriting equation \eqref{eq:sufficient} in terms of $g(t)$ and $t$, we get 
\[	
	\frac{1+t^{-1}}{k} \left(- \frac{g'(t)}{t^{k-2}} + \frac{k g(t)}{t^{k-1}}\right)
	\geq \frac{d}{2(d-k)} \cdot \frac{g(t)}{t^k},
\]
which is equivalent to the inequality 
\begin{equation}\label{def:A}
		A(t) = (t+1)g -  \frac{d}{2(d-k)} g -  \frac{t(t+1) g'}{k} \geq 0
	\end{equation}
for $t \geq 1+\frac{d (2k-d)^2 }{k(d-k) (4k-d-2-(2k-d)^2)}$.

 \begin{lemma}\label{l:Claim1}
 	If \emph{(\ref{eq:P2})} holds, then 
   $A''(t) > 0$ for all   $t \geq 1$ and 
   \[
   	A'(1) =     \frac{k (4k-d-2 - (2k-d)^2)}{2d(d-1)}.
   \]
   	\end{lemma}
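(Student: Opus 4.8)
The plan is to handle the two assertions separately: the value of $A'(1)$ is a routine computation, while the positivity of $A''$ is the substantial point.

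Writing (\ref{def:A}) as $A(t)=(t+1-c)\,g(t)-\tfrac{t(t+1)}{k}\,g'(t)$ with $c=\tfrac{d}{2(d-k)}$ and differentiating once,
\[
A'(t)=g(t)+\Big(t+1-c-\tfrac{2t+1}{k}\Big)g'(t)-\tfrac{t(t+1)}{k}\,g''(t).
\]
From the expansion (\ref{g-exp}) one reads off $g(1)=1$, $g'(1)=\tfrac{k(d-k)}{d}$ and $g''(1)=\tfrac{k(k-1)(d-k)(d-k-1)}{d(d-1)}$; substituting $t=1$ and simplifying over the common denominator $2d(d-1)$ yields $A'(1)=\tfrac{k\,(4k-d-2-(2k-d)^2)}{2d(d-1)}$, as claimed.

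For $A''(t)>0$ on $[1,\infty)$ the plan is to expand $A''$ in powers of $t-1$ and show every coefficient is positive. Differentiating once more,
\[
A''(t)=\frac{2(k-1)}{k}\,g'(t)+\Big(\frac{(k-4)t+(k-2)}{k}-c\Big)g''(t)-\frac{t(t+1)}{k}\,g'''(t),
\]
which is a polynomial of degree $d-k-1$. By (\ref{g-exp}) we may write $g(t)=\sum_{m\ge0}p_m(t-1)^m$ with $p_m=\binom{d}{k}^{-1}\binom{k}{m}\binom{d-m}{k}\ge0$, so $g',g'',g'''$ all have non-negative Taylor coefficients about $t=1$; using that the coefficient of $(t-1)^m$ in $g'$ is $(m+1)p_{m+1}$ (and similarly for $g'',g'''$), collecting the coefficient of $(t-1)^j$ in $A''$ gives
\[
\frac{(j+1)(j+2)}{k}\Big[(k-1-j)\,p_{j+1}+\big(2k-6-\tfrac{dk}{2(d-k)}-3j\big)\,p_{j+2}-2(j+3)\,p_{j+3}\Big].
\]
Since $p_{m+1}/p_m=\tfrac{(k-m)(d-k-m)}{(m+1)(d-m)}$ whenever $p_m>0$, one factors $p_{j+1}$ and $k-1-j>0$ out of the bracket and clears denominators; with $\ell=j+1$, positivity of the $(t-1)^j$-coefficient for $0\le j\le d-k-1$ reduces to
\[
\tilde Q_\ell:=(\ell+1)(d-\ell)(d-\ell-1)+\Big(2k-3-\tfrac{dk}{2(d-k)}-3\ell\Big)(d-k-\ell)(d-\ell-1)-2(d-k-\ell)(k-\ell-1)(d-k-\ell-1)>0 .
\]
For $\ell=d-k$ this is $(d-k+1)k(k-1)>0$. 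For $1\le\ell\le d-k-1$ I would substitute $d=k+r$, $v=2k-d$ (so $r=d-k\ge1$, $v\ge1$, $k=r+v$) and bound the unique negative contribution, which is proportional to $v^2$, via the hypothesis $(2k-d)^2<4k-d-2$ written as $v^2<2r+3v-2$; after this, $\tilde Q_\ell$ is bounded below by an explicit polynomial in $r,v,\ell$ whose positivity for all admissible integers follows by an elementary case analysis. Since every coefficient of $A''(1+s)$ is then positive — in particular $A''(1)>0$ — we conclude $A''(t)>0$ for all $t\ge1$.

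The hard part is this last step, verifying $\tilde Q_\ell>0$: condition (\ref{eq:P2}), equivalently $(2k-d-1)^2<d-1$, is exactly what bounds $v=2k-d$ relative to $r=d-k$ and forces the positive terms to dominate, and making the resulting polynomial estimate uniform in $d$ and $k$ (organising the case analysis, say by the size of $\ell$, or of $r$ relative to $v$) is where the bookkeeping is delicate, even though each individual step is elementary.
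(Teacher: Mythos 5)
Your setup is essentially the paper's: expand in powers of $t-1$ around $t=1$ and argue that every coefficient beyond the constant is positive, then observe this forces $A''>0$ on $[1,\infty)$. The computation of $A'(1)$ is routine and correct. But the crux of the lemma — the positivity of the coefficients — is exactly where your proposal stops: you arrive at the bracket
\[
(k-1-j)\,p_{j+1} + \Bigl(2k-6-\tfrac{dk}{2(d-k)}-3j\Bigr)p_{j+2} - 2(j+3)\,p_{j+3},
\]
reformulate it as $\tilde Q_\ell>0$, and then say this "follows by an elementary case analysis" that you admit you have not carried out and call "delicate." That is the entire substance of the lemma, so as written the argument has a genuine gap.

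The paper closes this gap with a much cleaner reduction, and it is worth seeing why. Rather than differentiating twice and then expanding, they expand $A$ itself as $A(1)+\sum_{j\ge1}a_j(t-1)^j$ and factor each $a_j$ (for $1\le j\le d-k$) as $a_j=\mu^{-1}\binom{k}{j}\binom{d-j}{k}\,h_j$ — i.e.\ they pull out the \emph{middle} $p_j$, not the lowest $p_{j+1}$ as you do — and the ratio then collapses to the strikingly simple
\[
h_j \;=\; 2 + \frac{j}{d-k-j+1} - \frac{d}{2(d-k)} - \frac{2(d-k)}{d-j}.
\]
Treating $j$ as a continuous variable $z$, one has $h'(z)=\frac{d-k+1}{(d-k-z+1)^2}-\frac{2(d-k)}{(d-z)^2}>0$ for $1\le z\le d-k$, because $2k>d$ gives $d-z\ge 2(d-k-z+1)$; so $h_j\ge h_1$ for all $j$, and $h_1>0$ is precisely condition (\ref{eq:P2}). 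No case analysis in $(r,v,\ell)$ is needed. (The extremal coefficient $a_{d-k+1}$ is handled separately and is trivially positive.) I'd recommend switching to this normalization: factoring out $p_j$ rather than $p_{j+1}$ is what produces the telescoping that makes the monotonicity argument painless.

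One further remark: it is slightly more economical to expand $A$ rather than $A''$. Positivity of all $a_j$, $j\ge1$, gives $A''>0$ on $[1,\infty)$ (as you note) and, as a free byproduct, $A'\ge A'(1)\ge0$ there, which is exactly what is used as "Claim 1" in the proof of Lemma~\ref{L3}. Expanding only $A''$ you would have to recover $A'(1)>0$ by a separate computation.
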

\begin{proof}
		 It is sufficient to  show that 
		$A = A(1) + \sum_{j=1}^{d-k+1} a_j (t-1)^j$ where  $a_j >0$ for all $j$.
		Using  the expansion from \eqref{g-exp}, we have
		\begin{align*}
		(t+1)g &= 2g + (t-1)g 
		\\ &=  2  g(1)+ \mu^{-1}\sum_{j=1}^{d-k+1}  
		\left(2 \binom{k}{j} \binom{d-j}{k} +   \binom{k}{j-1} \binom{d-j+1}{k}\right) (t-1)^j
		\end{align*}
		and 
		\begin{align*}
			t(t+1)g'&= 2 g'+  3(t-1)g' + (t-1)^2 g' 
			\\
			&= 2g'(1) +  \mu^{-1} \sum_{j=1}^{d-k+1} 
			\bigg( 
			2(j+1) \binom{k}{j+1} \binom{d-j-1}{k} +
			3 j \binom{k}{ j} \binom{d-j}{k} \\
			&\hspace{50mm}+ (j-1) \binom{k}{j-1} \binom{d-j+1}{k}
			\bigg) (t-1)^j.
		\end{align*}
		Thus, we find  that 
		\[
		a_{d-k+1} = \mu^{-1} \binom{k}{d-k} \left( 1-  \dfrac{d-k}{k} \right) > 0
		\]
		and, for $j=1\ldots,d-k$, we have 
		$a_{j} = \mu^{-1} \binom{k}{j} 
		\binom{d-j}{k}  h_j$, where
		\begin{align*}
			h_j &= 2 + \dfrac{j(d-j+1)}{(k-j+1)(d-k-j+1)} 
			- \dfrac{d}{2(d-k)} - \dfrac{2(k-j)(d-k-j)}{k(d-j)}
			- \dfrac{3j}{k} - \dfrac{j(j-1) (d-j+1)}{k(k-j+1)(d-k-j+1)}
			\\
			&=  2 +  \dfrac{j(d-j+1)}{k (d-k-j+1)}  -  
			\dfrac{d}{2(d-k)} - \dfrac{3j}{k}  - \dfrac{2(k-j)(d-k-j)}{k(d-j)} 
			\\
			&= 2   + \dfrac{j}{d-k-j+1} -    \dfrac{d}{2(d-k)}   - \dfrac{2(d-k)}{d-j}.
		\end{align*}
		Direct calculations show that 
		\begin{equation}\label{e:c1}
			h_1 = \frac{4k -d -2 - (2k-d)^2}{2(d-1)(d-k)}.
		\end{equation}
		Since (\ref{eq:P2}) holds we see that $h_1>0$.
		For any positive $1\leq z\leq d-k$, let 
		\[
		h(z)  =  2   + \dfrac{z}{d-k-z+1} -    \dfrac{d}{2(d-k)}   - \dfrac{2(d-k)}{d-z}.
		\] 
		Note that  $d-z \geq 2(d-k-z+1)$ for all $z\geq 1$. Therefore,
		\[
		h'(z) = \dfrac{d-k+1}{(d-k-z+1)^2} -  \dfrac{2(d-k)}{(d-z)^2} 
		\geq \frac{4(d-k+1)-2(d-k)}{(d-z)^2}> 0,
		\]
		which implies that $h_j = h(j) \geq h(1) > 0$ for all $j=2,\ldots, d-k+1$.  This completes the proof of the first statement, and the expression
		for $A'(1)$ follows as $A'(1) = a_1$. 
	\end{proof}

 We find from \eqref{g-exp} that 
 \begin{equation}
	 \label{eq:g-at-1}
	 g(1)=1 \quad \text{ and } \quad g'(1) = k(d-k)/d.  
 \end{equation}
 Thus, $A(1) = -\frac{(2k-d)^2 }{2d(d-k)}$. 
 Using  Lemma \ref{l:Claim1}, we find that, for $t \geq 1+\frac{(2k-d)^2 d}{k (d-k)(4k-d-2-(2k-d)^2)}$,
\begin{equation*} 
	A(t) \geq A(1) + (t-1)A'(1)
	 \geq  -\dfrac{(2k-d)^2 }{2d(d-k)}+ \dfrac{(2k-d)^2 d}{k (4k-d-2-(2k-d)^2)}\, A'(1) \geq  0.
\end{equation*}
This establishes \eqref{def:A} and completes the proof of Lemma \ref{lem:global-max}  under assumption (I).

\subsection{Proof of Lemma \ref{lem:global-max}.  Part (II) for  $k = \lceil (d+1)/2 \rceil$. }\label{ss:proof-B-k0}

 In this section we prove  Lemma \ref{lem:global-max} for the case when $k=k_0(d)$ defined by
\[  k_0(d) =  \begin{cases} (d+1)/2 & \text{ when $d$ is odd,}\\
	d/2+1 & \text{ when $d$ is even.}
\end{cases}
\]
However, some of the lemmas in this section will be proved under weaker
assumptions on $k$, so that we can reuse them in Section~\ref{s:large-d}. 

\begin{lemma}
	Suppose that $d\geq 5$ and $k= k_0(d)$.  If $x>1$, then
	\[ 
	\frac{(x+1)\, f'(x)}{k} < \big(1 + \eta(x)\big)\, f(x).
	\]
	\label{L1-smalld}
\end{lemma}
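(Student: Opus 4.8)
The plan is to reformulate the inequality as a comparison of two explicit functions. Recall from Section~\ref{ss:interior} that $(1+\eta(x))\,f(x) = f(x) - \frac{d-k}{d} + \frac{1}{d}\sqrt{D(x)}$, where $D(x) := (d-k)^2 + d(2k-d)\,f(x)$; writing $y(x) := \frac{(x+1)f'(x)}{k}$ as before, this gives
\[
(1+\eta(x))f(x) - y(x) = \frac{1}{d}\Big(\sqrt{D(x)} - L(x)\Big), \qquad L(x) := d\big(y(x) - f(x)\big) + (d-k).
\]
Since $D(x) > 0$ for every $x$, proving the lemma amounts to showing $L(x) < \sqrt{D(x)}$ for all $x > 1$. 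Two base facts set this up. First, $(d-k)^2 + d(2k-d) = k^2$ identically and $f(1)=1$, so $D(1) = k^2$; and $D'(x) = d(2k-d)f'(x) > 0$, so $D$ is strictly increasing and hence $\sqrt{D(x)} > k$ for $x > 1$. Second, using the expansion~\eqref{eq:f-middle} of $f - \frac{(x+1)f'}{k}$ about $x=1$ together with $\binom{d}{k} = \tfrac{d}{k}\binom{d-1}{k-1}$, one gets $L(1) = (2k-d)+(d-k) = k$, and more generally
\[
L(x) = k + \frac{d}{k\binom{d}{k}}\sum_{j\ge 1}\binom{k}{j}\binom{d-j-1}{k-j-1}\,(2k-d-j)\,(x-1)^j .
\]

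For $d$ odd we have $k=k_0(d)=(d+1)/2$, so $2k-d=1$ and the factor $2k-d-j=1-j$ is zero at $j=1$ and negative for $j\ge 2$; since each binomial product and each $(x-1)^j$ (for $x\ge 1$) is nonnegative, every term with $j\ge 1$ is $\le 0$, whence $L(x)\le k<\sqrt{D(x)}$ for $x>1$, and the lemma follows.

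For $d$ even, $k=d/2+1$ and $2k-d=2$, so $2k-d-j$ equals $1$ at $j=1$, vanishes at $j=2$, and is negative for $j\ge 3$. Evaluating the $j=1$ term via $\binom{d-2}{k-2}/\binom{d}{k} = \tfrac{k(k-1)}{d(d-1)}$ gives coefficient $c := \frac{d}{\binom dk}\binom{d-2}{k-2} = \frac{k(k-1)}{d-1}$, and all terms with $j\ge 3$ are $\le 0$ for $x\ge1$; hence $L(x)\le k+c(x-1)$. On the other side, $D(x) = k^2 + 2d\,(f(x)-1)$, and by~\eqref{expan1} the coefficients of $f(x)-1$ in powers of $(x-1)$ are all nonnegative, with the first two equal to $f'(1) = k^2/d$ and $\tfrac12 f''(1) = \tfrac{k^2(k-1)^2}{2d(d-1)}$; dropping the remaining (nonnegative) terms yields
\[
D(x) \ge k^2 + 2k^2(x-1) + \frac{k^2(k-1)^2}{d-1}(x-1)^2 \qquad (x>1).
\]
Since $c<k$ (because $k<d$) and $c^2 = \frac{k^2(k-1)^2}{(d-1)^2} \le \frac{k^2(k-1)^2}{d-1}$, we obtain $\big(k+c(x-1)\big)^2 < D(x)$ for $x>1$. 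If $L(x)<0$ then $L(x)<\sqrt{D(x)}$ trivially; otherwise $0\le L(x)\le k+c(x-1)$ gives $L(x)^2 \le \big(k+c(x-1)\big)^2 < D(x)$, so again $L(x)<\sqrt{D(x)}$. This completes both parities.

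The odd case is essentially immediate once the $\sqrt{D}>L$ reformulation is in hand: the two sides agree at the value $k$ when $x=1$ and then separate because $D$ is monotone and $L$ has nonpositive higher Taylor coefficients. The main obstacle is the even case, where $L$ is strictly increasing at $x=1$ (its $(x-1)$-coefficient $c$ is positive), so the crude bound $\sqrt{D(x)}>k\ge L(x)$ fails; one has to race the linear growth of $L$ against the quadratic lower bound for $D$, which is exactly where the two elementary checks $c<k$ and $c^2\le d\,f''(1)$ enter. I expect the sign bookkeeping for the Taylor coefficients of $L$ coming from~\eqref{eq:f-middle}, and the two binomial identities for the $j=0,1$ coefficients, to be the most error-prone steps, so I would verify those carefully.
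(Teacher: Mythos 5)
Your proof is correct, and it takes a genuinely different route from the paper's. The paper exploits the equality at $x=1$ and differentiates: the derivative of the right-hand side is split into $R_1 = f'$ and $R_2 = (2k-d)f'\big/\big(2\sqrt{(d-k)^2+d(2k-d)f}\,\big)$; the paper checks that the derivative inequality holds strictly at $x=1$, that $R_1 - y'$ has nonnegative Taylor coefficients past the constant term, and that $R_2$ is increasing via a Cauchy--Schwarz manipulation, then concludes by monotonicity. You instead rationalize $\eta$ to rewrite the desired inequality as $L(x) < \sqrt{D(x)}$ with $D = (d-k)^2 + d(2k-d)f$ and $L = d(y-f) + (d-k)$, and then compare Taylor expansions of $L$ and $D$ about $x=1$ directly, splitting by the parity of $d$. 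The odd case ($2k-d=1$) is immediate because every non-constant coefficient of $L$ is nonpositive, so $L \le L(1) = k = \sqrt{D(1)} < \sqrt{D(x)}$ for $x>1$. In the even case ($2k-d=2$) the single positive coefficient gives the linear bound $L \le k + c(x-1)$ with $c = k(k-1)/(d-1)$, and you race this against the quadratic lower bound $D \ge k^2 + 2k^2(x-1) + \frac{k^2(k-1)^2}{d-1}(x-1)^2$, which reduces to the two elementary checks $c<k$ and $c^2 \le \frac{k^2(k-1)^2}{d-1}$. The gain of your route is that the Cauchy--Schwarz monotonicity argument is avoided entirely, and the odd case becomes almost trivial; the cost is a parity split. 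Both proofs rest on the same Taylor expansion of $f - y$ about $x=1$ and the same binomial identities for its low-order coefficients.
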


\begin{proof}
	Since equality holds in (\ref{diff-eq2}) when $x=1$, it
	suffices to prove that the derivative of the LHS is strictly
	smaller than the derivative of the RHS whenever $x>1$.
	The left hand side of (\ref{diff-eq2}) equals $y(x)$.
	Define 
	\[ R_1(x) =  f'(x),\qquad R_2(x) = \frac{(2k-d)\, f'(x)}{2 \sqrt{(d-k)^2 + d(2k-d)\, f(x)}}.\]
	Then the derivative of the left hand side of (\ref{diff-eq2}) is given by
	\[   y'(x) = \frac{1}{k}\, \big( (x+1)\, f''(x) + f'(x)\big),\]
	while the derivative of the right hand side equals $R_1(x) + R_2(x)$.
	We claim that
	\begin{itemize}
		\item[(a)] $y'(1) < R_1(1) + R_2(1)$;
		\item[(b)] $(R_1 - y')^{(\ell)}(1) \geq 0$ for all $\ell\geq 1$. That is, the $\ell$-th order derivatives of $R_1(x) - y'(x)$ are all nonnegative at $x=1$; 
		\item[(c)] $R_2'(x) > 0$ for all $x > 1$. 
	\end{itemize}
	Before establishing these facts, we show why they complete the
	proof.  From (b), we conclude that $R_1(x) - y'(x)$ is an increasing
	function of $x$ on $(1,\infty)$, as all non-constant coefficients of 
	the Taylor expansion of $R_1(x) - y'(x)$ around
	$x=1$ are nonnegative. 
	From (c) we conclude that $R_2(x)$ is
	a strictly increasing function of $x$ on $(1,\infty)$.  Then (a) 
	implies that $y'(x) < R_1(x) + R_2(x)$ for
	all $x>1$, as required. 
	Thus it remains to establish (a), (b) and (c).
	
	Using (\ref{eq:f-derivs}), we calculate that
	\[
		R_1(1) = f'(1) = \frac{k^2}{d},\quad R_2(1) = \frac{(2k-d)k}{2d},
		\quad y'(1) = \frac{k\big(d-1+2(k-1)^2\big)}{d(d-1)}.
		\]
	Direct substitution shows that
	\[ (4k-d-2)(d-1)  > 4(k-1)^2\]
	when $k = k_0(d)$ and $d\geq 5$.  Hence (a) holds.
	
	Since $(f-y)' = R_1- y'$ and
	\[ 1 - \frac{j}{k} - \frac{2(k-j)^2}{k(d-j)}  = \frac{(k-j)(d-2k+j)}{k(d-j)},\]
	it follows from (\ref{expan1}) and (\ref{g-exp}) that all non-constant coefficients of $R_1(x)-y'(x)$ are nonnegative, noting that the non-constant coefficients of $R_1 - y'$ corresponds to terms with $j\geq 2$.
	By Taylor's Theorem, we conclude that~(b) holds. 
	
	Finally we consider~(c).  Note that
	\begin{align*}
		\left( R_2^2\right)' &= \frac{(2k-d)^2\, f'}
		  {d^2\, \left((d-k)^2 + d(2k-d)\, f\right)^2}\, \left( f''\, \big( 2(d-k)^2 + 2d(2k-d)\, f \big) - d(2k-d)\, (f')^2\right).
	\end{align*}
	Furthermore, $R_2(x)>0$ for all $x\geq 1$, by definition.
	Hence  $R_2'(x) > 0$ if and only if 
	\begin{equation}  f''(x)\left( 2(d-k)^2 + 2d(2k-d)\, f(x)\right) 
		- d(2k-d)\, \big(f'(x)\big)^2 > 0.
		\label{stepping-stone}
	\end{equation}
	Using (\ref{expan0}) we can write $f(x) = \sum_{j=0}^k \alpha_j x^j$,
	where the coefficients $\alpha_j$ are positive. Then
	\[ f''(x) + x^{-1} f'(x) = \alpha_1 x^{-1} + \sum_{j=2}^{k} j^2 \alpha_j\, x^{j-2}
	= \sum_{j=1}^{k} \big( j\sqrt{\alpha_j x^{j-2}}\big)^2,\]
	and $f(x) \geq \sum_{j=1}^k \big(\sqrt{\alpha_j\, x^j}\big)^2$.  
	Then the Cauchy--Schwarz inequality implies that
	\[ \big(f''(x) + x^{-1}\, f'(x)\big)\, f(x) \geq \left(\sum_{j=1}^k j\, \alpha_j x^{j-1}\right)^2 = \big( f'(x)\big)^2.
	\]
	Hence, to establish (\ref{stepping-stone}) it suffices to prove that
	\[ d(2k-d)\, \big( f''(x) + x^{-1}\, f'(x)\big)\, f(x) < f''(x)\, 
	  \big( 2(d-k)^2 + 2d(2k-d)\, f(x)\big),
	\]
	which we rearrange as
	\[ d(2k-d)\, x^{-1}\, f'(x)\, f(x) < f''(x)\, 
	\big( 2(d-k)^2 + d(2k-d)\, f(x)\big).
	\]
	But 
	\[ f''(x)\, \big( 2(d-k)^2 + d(2k-d)\, f(x)\big)\geq d(2k-d)\, f''(x)\, f(x)\]
	so the desired inequality holds if $x f''(x) > f'(x)$ for all $x\geq 1$.
	Since $f''$ is an increasing function on $x\geq 1$, by the Mean Value Theorem we
	have
	\[ f'(x) \leq f'(1) + (x-1)f''(x),\]
	so it suffices to check that $f'(1) \leq f''(1)$, again using the fact that
	$f''$ is monotonic increasing for $x\geq 1$.
	By (\ref{eq:f-derivs}),
	\[
	\frac{d(d-1)}{k^2}\big(f''(1) - f'(1)\big) = (k-1)^2 - (d-1)
	\geq \left(\frac{d+1}{2}\right)^2 - d + 1
	= \frac{(d-1)(d-5)}{4}.
	\]
	Hence the result holds.
\end{proof}

It remains to prove that (\ref{f-identity}) has no solution with $x\in (0,1)$.
This will follow from the next three lemmas.

\begin{lemma}
	\label{no-label}
	If $x\in (0,1)$, then 
	\[
	(1+\eta)f 
	<  \frac{d}{2(d-k)}\, f -  \frac{(2k-d)^2}{2d(d-k)}\, f^2.
	\]
\end{lemma}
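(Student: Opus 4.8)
The plan is to reduce the claimed inequality to a purely algebraic statement about the single quantity $f=f(x)$, using the closed form for $(1+\eta)f$ derived just before the statement, namely
\[ (1+\eta(x))\,f(x) = f(x) - \frac{d-k}{d} + \frac{1}{d}\sqrt{(d-k)^2 + d(2k-d)\,f(x)}. \]
Since $f$ is increasing on $(0,\infty)$ with $f(1)=1$ and $f(0)=0$ (the latter because $2k>d$), we have $0<f(x)<1$ whenever $x\in(0,1)$, as already recorded in the text. Hence it suffices to prove that for every $u\in(0,1)$,
\[ u - \frac{d-k}{d} + \frac{1}{d}\sqrt{(d-k)^2 + d(2k-d)\,u} \;<\; \frac{d}{2(d-k)}\,u - \frac{(2k-d)^2}{2d(d-k)}\,u^2. \]

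First I would clear denominators by multiplying both sides by $2d(d-k)>0$ and collecting all terms on one side, so that the goal becomes showing that a certain expression $E(u)$ is strictly positive on $(0,1)$. Then I would substitute $w := \sqrt{(d-k)^2 + d(2k-d)\,u}$, which increases from $d-k$ at $u=0$ to $k$ at $u=1$ and satisfies $d(2k-d)\,u = w^2-(d-k)^2$. Writing $a=2k-d$ and $b=d-k$, both positive since $d/2<k<d$, so that $d=a+2b$ and $k=a+b$, one eliminates $u$ using $au=(w^2-b^2)/d$; the identities $d^2u-2d(d-k)u=w^2-b^2$ and $(2k-d)^2u^2=(w-b)^2(w+b)^2/(a+2b)^2$ then collapse $E(u)$ to
\[ E = \frac{(w-b)^2\,(a+b-w)\,(a+3b+w)}{(a+2b)^2}, \]
where the factor $1-(w+b)^2/(a+2b)^2$ has been factored as $(a+b-w)(a+3b+w)/(a+2b)^2$.

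Finally, for $u\in(0,1)$ one has $b<w<a+b=k$, so each of $(w-b)^2$, $a+b-w$, $a+3b+w$, and $(a+2b)^2$ is strictly positive; hence $E>0$, which is the desired strict inequality. As a consistency check, at $u\in\{0,1\}$ one gets $w\in\{b,a+b\}$ and $E=0$, matching the fact that the two sides agree at $f=0$ and at $f=1$. The only point of genuine care is the bookkeeping that produces the factored form of $E$ (a dropped sign would be fatal), together with recording $0<f(x)<1$ on $(0,1)$; everything else is immediate, so I expect that routine computation to be the main obstacle.
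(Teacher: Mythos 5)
Your proof is correct; I verified the factorisation. With $a=2k-d$, $b=d-k$, $d=a+2b$, $w^2 = b^2 + dau$, multiplying the difference of the two sides by $2d(d-k)$ gives $d^2u - a^2u^2 - 2dbu + 2b^2 - 2bw$; using $dau = w^2-b^2$ to eliminate $u$ one obtains $(w-b)^2 - (w-b)^2(w+b)^2/d^2 = (w-b)^2(d-w-b)(d+w+b)/d^2 = (w-b)^2(a+b-w)(a+3b+w)/(a+2b)^2$, which is strictly positive for $b<w<a+b$, i.e.\ for $0<f<1$.

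Your route is genuinely different in organisation from the paper's. The paper proceeds by a direct estimate: it computes $\dfrac{d}{2(d-k)} - \dfrac{d}{d-k+\sqrt{(d-k)^2+d(2k-d)f}} = \dfrac{d^2(2k-d)f}{2(d-k)\bigl(d-k+\sqrt{(d-k)^2+d(2k-d)f}\bigr)^2}$, observes this exceeds $\dfrac{(2k-d)f}{2(d-k)}$ precisely because $f<1$ (so the square root is $<k$), and then multiplies and rearranges to reach the stated bound. Your approach substitutes away the square root via $w$ and arrives at a complete factorisation of the discrepancy between the two sides. Both rely on exactly the same fact ($0<f<1$, equivalently $b<w<k$), and are of comparable length; yours has the small advantage of making the boundary cases $f\in\{0,1\}$ visibly zeros of the product, while the paper's version avoids introducing a change of variable. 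Either would serve.
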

\begin{proof} 
Since  $f < 1$ for $x\in (0,1)$, we get that
\begin{align*}
	 \frac{d}{2(d-k)} &- \frac{d}{d-k+ \sqrt{(d-k)^2 + d(2k-d)\, f}} \\
	&=  \frac{d^2(2k-d)\, f}{2(d-k)\big(d-k + \sqrt{(d-k)^2 + d(2k-d)\,f}\, \big)^2}\\
	&>   \frac{2k-d}{2(d-k)}\, f.
	\end{align*}
	Therefore
	\[
	\eta=
	\frac{2k-d}{d-k + \sqrt{(d-k)^2 + d(2k-d)\, f}}
	< \frac{2k-d}{2(d-k)} - \frac{(2k-d)^2}{2d(d-k)}\, f.
	\]
	The required bound follows after multiplying by $(2k-d)f/d$ and rearranging.
\end{proof}

Recall the function $g$ from (\ref{g-def}). 

\begin{lemma}\label{l:monotone}
	For all $t>0$, we have that 
	\[
	g(t g'' + g') > t (g')^2
	\]
	and hence the rational function $tg'/g$ is monotonically strictly increasing on $t>0$.
\end{lemma}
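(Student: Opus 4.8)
The plan is to expand $g$ as an explicit polynomial and reduce the inequality to a single application of the Cauchy--Schwarz inequality, exactly in the spirit of the argument used in the proof of Lemma~\ref{L1-smalld}. Write $g(t) = \sum_{j\geq 0} \alpha_j t^j$; by~(\ref{g-def}) the coefficients $\alpha_j = \binom{d}{k}^{-1}\binom{k}{j}\binom{d-k}{j}$ are nonnegative, with $\alpha_0 = \binom{d}{k}^{-1} > 0$, $\alpha_1 = \binom{d}{k}^{-1} k(d-k) > 0$, and $g$ a non-constant polynomial with strictly positive leading coefficient. Differentiating term by term, $g'(t) = \sum_{j\geq 1} j\alpha_j t^{j-1}$, and the crucial observation is that the combination in the statement collapses to a clean single series:
\[
 t\,g''(t) + g'(t) = \sum_{j\geq 1} j(j-1)\alpha_j t^{j-1} + \sum_{j\geq 1} j\alpha_j t^{j-1} = \sum_{j\geq 1} j^2\alpha_j t^{j-1}.
\]

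Next I would discard the $j=0$ term of $g$ and apply Cauchy--Schwarz to the two series above, using the factorisations $\sum_{j\geq 1}\alpha_j t^j = \sum_{j\geq 1}\bigl(\sqrt{\alpha_j}\,t^{j/2}\bigr)^2$ and $t\,g''(t)+g'(t) = \sum_{j\geq 1}\bigl(j\sqrt{\alpha_j}\,t^{(j-1)/2}\bigr)^2$, which gives
\[
 \Bigl(\sum_{j\geq 1}\alpha_j t^j\Bigr)\bigl(t\,g''(t) + g'(t)\bigr) \geq \Bigl(\sum_{j\geq 1} j\alpha_j t^{\,j-1/2}\Bigr)^2 = t\,\bigl(g'(t)\bigr)^2 .
\]
Since $t\,g''(t) + g'(t) \geq \alpha_1 > 0$ for every $t>0$, adding back the omitted contribution $\alpha_0\bigl(t\,g''(t)+g'(t)\bigr)>0$ upgrades this to the strict inequality $g(t)\bigl(t\,g''(t)+g'(t)\bigr) > t\,\bigl(g'(t)\bigr)^2$, which is the first assertion.

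For the monotonicity statement I would just differentiate the quotient: for $t>0$,
\[
 \frac{d}{dt}\!\left(\frac{t\,g'(t)}{g(t)}\right) = \frac{\bigl(g'(t)+t\,g''(t)\bigr)g(t) - t\,\bigl(g'(t)\bigr)^2}{g(t)^2},
\]
whose numerator is precisely the quantity just shown to be positive and whose denominator is positive because $g$ has strictly positive coefficients (so $g(t)>0$ for $t>0$); hence $t\,g'/g$ is strictly increasing on $(0,\infty)$. I do not expect a real obstacle here; the only delicate point is obtaining the \emph{strict} inequality, since Cauchy--Schwarz by itself yields only "$\geq$" — this is resolved by isolating the positive constant term $\alpha_0$ of $g$, equivalently by noting that $g$ is not a monomial.
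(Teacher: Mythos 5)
Your proof is correct and follows essentially the same route as the paper: both arguments observe the collapse $t\,g''+g' = \sum_{j\ge 1} j^2\alpha_j t^{j-1}$, drop the constant term of $g$, apply Cauchy--Schwarz to the truncated series to obtain $t(g')^2$, and recover strictness from the strictly positive contribution of $\alpha_0$. The paper states strictness by noting the constant term was dropped; you make the same point by explicitly adding back $\alpha_0(tg''+g')>0$ — the same idea, phrased slightly more explicitly.
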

\begin{proof}
	Using the Cauchy-Schwarz inequality, we have
	\begin{align*}
		g(t g'' + g')  & > t \mu^{-2}
		\sum_{j=1}^{d-k} \binom{k}{j} \binom{d-k}{j} t^{j-1} \cdot
		\sum_{j =1}^{d-k} j^2  \binom{k}{j} \binom{d-k}{j} t^{j-1}
		\\
		&\geq t \mu^{-2} \left(  \sum_{j =1}^{d-k} j  \binom{k}{j} \binom{d-k}{j} t^{j-1}\right)^2 =  t (g')^2
	\end{align*}
	The first inequality is strict because we dropped the constant term in $g$.
\end{proof}

Now we are ready to prove the following bound.

\begin{lemma}\label{L3}
	Assume that $2d \geq 3k$ and \emph{(\ref{eq:P2})} holds.
	Then, for any $x\in(0,1)$,
	\[
  \frac{(1+x) f'}{k}> (1 + \eta) f.  
	\]
\end{lemma}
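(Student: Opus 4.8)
The inequality to be proved says precisely that equation (\ref{diff-eq2}) has no solution on $(0,1)$, so it completes the ``$x<1$'' half of the analysis begun in this section. The plan is to start by using Lemma~\ref{no-label} to get rid of the square root in $\eta$: since that lemma gives
\[
(1+\eta(x))\,f(x)\ <\ \frac{d}{2(d-k)}\,f(x)-\frac{(2k-d)^2}{2d(d-k)}\,f(x)^2\qquad\text{for }x\in(0,1),
\]
it suffices to prove that $\dfrac{(1+x)f'(x)}{k}\ \ge\ \dfrac{d}{2(d-k)}f(x)-\dfrac{(2k-d)^2}{2d(d-k)}f(x)^2$ on $(0,1)$. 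Both sides equal $2k/d$ at $x=1$ (using $f(1)=1$, $f'(1)=k^2/d$ from (\ref{eq:f-derivs})), so this is a tight inequality and everything will hinge on its behaviour near the endpoint $x=1$.

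Next I would pass to the variable $t=1/x\in(1,\infty)$ and to the polynomial $g$ of (\ref{g-def}), via $f(1/t)=g(t)/t^k$ and the identity (\ref{eq:ft-deriv}). Multiplying the reduced inequality through by $t^k$ and using $t^k\cdot\frac{(1+x)f'(x)}{k}=(t+1)g(t)-\frac{t(t+1)g'(t)}{k}$, it becomes
\[
A(t)+\frac{(2k-d)^2}{2d(d-k)}\cdot\frac{g(t)^2}{t^k}\ \ge\ 0\qquad\text{for all }t\ge1,
\]
where $A$ is the function of (\ref{def:A}); by (\ref{eq:g-at-1}) one checks $A(1)=-\frac{(2k-d)^2}{2d(d-k)}$, so again there is equality at $t=1$. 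Now I would use the two structural facts already proved. By Lemma~\ref{l:Claim1}, $A$ is convex on $[1,\infty)$, $A'(1)=\frac{k(4k-d-2-(2k-d)^2)}{2d(d-1)}>0$ (this is where (\ref{eq:P2}) enters), so $A(t)\ge A(1)+(t-1)A'(1)$. By Lemma~\ref{l:monotone}, $tg'(t)/g(t)$ is increasing and equals $k(d-k)/d$ at $t=1$, hence $\frac{d}{dt}\log\!\big(g(t)^2/t^k\big)=\frac1t\big(2\,tg'(t)/g(t)-k\big)\ge-\frac{k(2k-d)}{dt}$ for $t\ge1$, giving $g(t)^2/t^k\ge t^{-k(2k-d)/d}$. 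Substituting both bounds reduces the claim to a single-variable inequality in $t$ that vanishes (with matching first-order behaviour) at $t=1$, which one closes by a short convexity argument in $t$; verifying the resulting endpoint condition is where the hypothesis $2d\ge3k$ is used.

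The main obstacle is exactly this final verification. Because the target is tight at $t=1$, every estimate must be sharp there, and the crude bound $g(t)^2/t^k\ge t^{-k(2k-d)/d}$ is too lossy once $2k-d$ is not small; in that range one instead has to retain the first couple of Taylor coefficients of $g(t)^2/t^k$ about $t=1$ (controlled through (\ref{g-exp}) and the monotonicity of $tg'/g$) together with the quadratic term of $A$ about $t=1$, and then compare coefficients, the inequalities $2d\ge3k$ and (\ref{eq:P2}) being exactly what makes the comparison go through. An alternative, which avoids Lemma~\ref{no-label} entirely, is to work directly with the quadratic $q(y)=(y-f)^2-f+\frac{2(d-k)}{d}y$, whose larger root is $(1+\eta)f$: it then suffices to show $q\!\big(\frac{(1+x)f'}{k}\big)>0$ together with $\frac{(1+x)f'}{k}\ge f-\frac{d-k}{d}$ on $(0,1)$, which after the same $t$-substitution becomes a polynomial-type inequality amenable to the Taylor/coefficient analysis above; this is cleaner near $x=1$ but trades one set of routine computations for another.
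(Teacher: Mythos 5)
Your opening steps coincide with the paper's: apply Lemma~\ref{no-label}, pass to $t=1/x$ and $g$ via (\ref{eq:ft-deriv}), and reduce the claim to $A(t)+B(t)\geq 0$ for $t\geq 1$, where $B(t)=\frac{(2k-d)^2}{2d(d-k)}\cdot\frac{g(t)^2}{t^k}$. Where you depart is in handling $B$: the paper proves (its Claim~2) a dichotomy at the point $t_0$ where $tg'/g=k/2$, showing $B$ is convex on $[1,t_0]$ and increasing thereafter, then combining this with convexity of $A$ to conclude $(A+B)'\geq A'(1)+B'(1)\geq 0$ everywhere. Your idea of replacing $B$ by the convex lower bound $B(1)\,t^{-k(2k-d)/d}$ obtained from Lemma~\ref{l:monotone} is actually cleaner: the comparison function $L(t)=A(1)+(t-1)A'(1)+B(1)\,t^{-k(2k-d)/d}$ is convex, satisfies $L(1)=A(1)+B(1)=0$ (you have $A(1)=-(2k-d)^2/\bigl(2d(d-k)\bigr)$ correct), and has $L'(1)=A'(1)+B'(1)$, so once $L'(1)\geq 0$ is known convexity alone gives $L\geq 0$, hence $A+B\geq L\geq 0$, with no case split at $t_0$.

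There is, however, a genuine gap in your write-up, together with a misdirected worry. The worry: your claim that the bound $g^2/t^k\geq t^{-k(2k-d)/d}$ is ``too lossy once $2k-d$ is not small'' is not right---the bound matches $B$ to first order at $t=1$ (both value and derivative agree there) and is convex, which is exactly enough for the convexity argument to close; the remedies you sketch (retaining higher Taylor terms, or bypassing Lemma~\ref{no-label} via a direct quadratic analysis of $q(y)$) are unnecessary detours. The gap: you never verify the endpoint condition $A'(1)+B'(1)\geq 0$, which is the crux and precisely the paper's Claim~3. That verification combines $A'(1)=a_1=\frac{k(4k-d-2-(2k-d)^2)}{2d(d-1)}$ from Lemma~\ref{l:Claim1} with $B'(1)=-\frac{k(2k-d)^3}{2d^2(d-k)}$ (computed from (\ref{eq:g-at-1})) and shows the sum is nonnegative using (\ref{eq:P2}) together with $2d\geq 3k$. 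Without performing this calculation the argument does not terminate: if $L'(1)<0$ the whole bound collapses, since $L$ dips below zero immediately to the right of $t=1$.
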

\begin{proof}
By Lemma~\ref{no-label},  it suffices to prove that for all $x\in (0,1)$,
\[ 
  \frac{(1+x) f'}{k} \geq \frac{d}{2(d-k)}\, f -  \frac{(2k-d)^2}{2d(d-k)}\, f^2.
  \]

Using (\ref{eq:ft-deriv}) we rewrite this inequality in terms of $g$: it
suffices to prove that for all $t > 1$, 
\[
\frac{1+ t^{-1}}{ k} \left(  - \frac{g'}{t^{k-2}} + \frac{k g}{t^{k-1}}\right) 
\geq   \frac{d}{2(d-k)}\, \frac{g}{t^{k}} - \frac{(2k-d)^2}{2d(d-k)}\, \frac{g^2}{t^{2k}}.
\]
This is equivalent to the inequality $A+ B \geq 0$, where $A$ is defined
in (\ref{def:A}) and
\begin{align*}
	B  &= B(t) =  \frac{(2k-d)^2\, g^2}{2d(d-k)\, t^k}.
\end{align*}
Next we state a sequence of claims which together will imply the result.

\bigskip

\noindent 
	\textbf{Claim 1.} $A'(t) \geq 0$ and  $A''(t)\geq 0$ for $t \geq 1$.

	\bigskip
	
	\noindent 
	\textbf{Claim 2.}  Either $B''(t)\geq 0$ for all $t\geq 1$, or there exists some $t_0>1$ such that 
	$B'(t) \geq 0$ for $t \geq t_0$ and  $B''(t)\geq 0$ for  $1 \leq t  \leq t_0$.
	
	\bigskip
	
	\noindent 
	\textbf{Claim 3.}  $A(1) + B(1) =0$ and $A'(1) + B'(1) \geq 0$.
	
	\bigskip
	
	First, we complete the proof based on these claims.  By a slight abuse of notation, set $t_0=\infty$ in the first case of Claim~2.
	Note that  
	\[ (A+B)' \geq A'(1) + B'(1) \geq 0\]
	for  $1 \leq t  \leq t_0$, since both $A$ and $B$ are convex on this interval.
	For $t\geq t_0$, we have $(A+B)' \geq 0$ since both $A$ and $B$ are increasing. 
	Therefore, $A+B$ is an increasing function on $[1,+\infty)$ and thus $A + B \geq A(1) + B(1)=0$, as required.   
Next, observe that Claim~1 follows from Lemma~\ref{l:Claim1}. Hence it remains to establish Claims~2 and~3.

	For Claim 2, direct calculations show that 
	\[ 
	\frac{t\, g'(t)}{g(t)} \rightarrow 
	\begin{cases} k(d-k)/d  &\text{ as $t\rightarrow 1$,}\\
		d-k & \text{ as $t\rightarrow \infty$},
	\end{cases}
	\]
	By our assumptions, we have $k(d-k)/d < k/2 \leq d-k$.
	Hence using Lemma \ref{l:monotone}, if $3k<2d$, then
	there is a unique point $t_0\in (1,\infty)$ such that
	\[ \frac{t_0\, g'(t_0)}{g(t_0)} = \frac{k}{2},
	\]
	while if $3k=2d$, then $t g'/g$ increases strictly with limit $k/2$ as 
		$t\to\infty$, and we set $t_0=\infty$.
	We calculate directly that
	\[ \left(\frac{g(t)^2}{t^{k}}\right)' = \frac{g}{t^{k+1}}\,
	\left( 2t\, g' - k\, g\right).
	\]
	This expression is strictly positive when $t>t_0$, by definition of $t_0$. This proves that $B'(t)\geq 0$ for $t \geq t_0$,  as $B$ is a positive multiple of $g^2/t^{k}$.
	Differentiating again gives
	\begin{align*}
		\left(\frac{g(t)^2}{t^{k}}\right)^{''} &= 
		\frac{2\, (g')^2 + 2g\, g''}{t^{k}} - \frac{4k g\, g'}{t^{k+1}}
		+ \frac{k(k+1)\, g^2}{t^{k+2}}\\
		&\geq \frac{4t^2\, (g')^2 - 2(2k+1)t\, g\, g' + k(k+1)\,g^2}{t^{k+2}}
		+ \frac{2g\, g' - 2t\, (g')^2 + 2t\, g\, g''}{t^{k+1}}\\
		&\geq \frac{1}{t^{k+2}}\, \left( k \, g - 2t\, g'\right)\left( (k+1) g- 2t \, g'\right).
	\end{align*}
	The final inequality follows since 
	the last term of the penultimate line
	(with denominator $t^{k+1}$) is nonnegative, by the first statement of
	Lemma \ref{l:monotone}.   Therefore if $1\leq t\leq t_0$, then,
	again by Lemma~\ref{l:monotone},
	\[ \frac{t\, g'(t)}{g(t)} \leq \frac{k}{2}\]
	and hence $B''(t) \geq 0$. 
	This completes the proof of Claim~2.
	
	\bigskip
	
	For Claim 3, recalling (\ref{eq:g-at-1}), we have 
	\[ 
	A(1) = -\frac{(2k-d)^2}{2d(d-1)} = - B(1).
	\]
	Using \eqref{e:c1}, we obtain  that 
	\[
	A'(1) = a_1 = \mu^{-1} k \binom{d-1}{k} c_1 \geq \frac{k(4k -d -2 - (2k-d)^2)}{2d^2}.
	\]
	Observe also  that 
	\[
	B'(1) = \frac{(2k-d)^2 }{2d(d-k)}\, \left( \frac{2  k (d-k)}{d} -k  \right)
	= -\frac{k (2k-d)^3}{2 d^2 (d-k)}.
	\]
	Therefore, by assumptions, we find that 
\[	 	A'(1) + B'(1) \geq \frac{k}{2d^2 (d-k)} \left( (d-k) (4k-d-2 - (2k-d)^2) - (2k-d)^3\right).\]
Now using (\ref{eq:P2}), the right hand side is nonnegative if 
$d-k > (2k-d)^3$, and this inequality holds when $3k\leq 2d$.  
This completes the proof of Claim~3 and the lemma.
\end{proof}

If $d\geq 5$ and $k=k_0(d)$, then $3k\leq 2d$ and (\ref{eq:P2}) holds.
Hence, by 
Lemma~\ref{L3} we conclude that 
(\ref{f-identity}) has no solution in $(0,1)$. 
Recalling Lemma~\ref{lem:boundary} and Lemma~\ref{L1-smalld}, this completes
the proof of Lemma~\ref{lem:global-max}
when $k=k_0(d)$ and $d\geq 5$. 

\subsection{Proof of Lemma \ref{lem:global-max}.  Part (II) for large $d$. }\label{s:large-d}

In this subsection, we  prove Lemma \ref{lem:global-max} for values of $k$ in the range
$k_0(d) < k\leq d/2 + \frac{\log d}{6}$ that are not covered in the previous subsection.
Thus, we can assume that 
$d \geq 404 =  \lceil e^6\rceil$.

\begin{lemma}\label{L2}
	Assume that $d\geq 404$ and  $d/2+1 < k \leq 2d/3$. Then
	\[
	\frac{(x+1)f'(x)}{k} < (1 + \eta(x) )f(x))
	\]
	 for all $x \in (1,2]$ such that $f(x) \leq k/(2k-d)^2$.
\end{lemma}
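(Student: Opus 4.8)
The plan is to compare $\big(1+\eta(x)\big)f(x)$ with the quadratic
$q(f):=\dfrac{d}{2(d-k)}\,f-\dfrac{(2k-d)^2}{2d(d-k)}\,f^2$
that already appears in Lemmas~\ref{no-label} and~\ref{L3}. The estimate in the proof of Lemma~\ref{no-label} depends only on the sign of $1-f$, so, since $f(x)>f(1)=1$ for $x\in(1,2]$, that argument runs in reverse and gives $\big(1+\eta(x)\big)f(x)>q\big(f(x)\big)$ on $(1,2]$. Because $\big(1+\eta(x)\big)f(x)>q\big(f(x)\big)$ is strict there, it suffices to prove
\[
 \frac{(x+1)\,f'(x)}{k}\ \le\ q\big(f(x)\big),
\]
and since $y(1)=2k/d=q(1)$ this is an equality at $x=1$.

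The hypothesis $f(x)\le k/(2k-d)^2$ is used to keep $q$ on its increasing branch: the maximum of $q$ is at $f=\dfrac{d^2}{2(2k-d)^2}$, which exceeds $\dfrac{k}{(2k-d)^2}$ because $d^2>2k$ (from $d\ge404$ and $k\le 2d/3$). Moreover, using $f(x)\ge 1+f'(1)(x-1)=1+\tfrac{k^2}{d}(x-1)$, the constraint $f(x)\le k/(2k-d)^2$ forces $x-1$ to be at most of order $d/\big(k(2k-d)^2\big)$, so we are in fact working on a small neighbourhood of $x=1$; this is what makes a local analysis possible.

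I would then substitute $t=1/x$ and $g(t)=t^k f(1/t)$, exactly as in the proof of Lemma~\ref{L3}. The inequality $\tfrac{(x+1)f'}{k}\le q(f)$ becomes $A(t)+B(t)\le0$ for $t$ in the (small) image interval contained in $[1/2,1)$, where $A$ and $B$ are the functions from the proof of Lemma~\ref{L3}, and we already know $A(1)+B(1)=0$. Writing $A(t)+B(t)=-\int_t^1\big(A'(s)+B'(s)\big)\,ds$, it is enough to show $A'+B'\ge0$ on $[t,1]$ for every admissible $t$. From Lemma~\ref{l:Claim1}, $A(s)=A(1)+\sum_{j\ge1}a_j(s-1)^j$ with every $a_j>0$, so $A'(s)\ge a_1-\sum_{j\ge2}ja_j(1-s)^{j-1}$, where $a_1=A'(1)=\dfrac{k\big(4k-d-2-(2k-d)^2\big)}{2d(d-1)}$; since $1-s$ is small, the subtracted tail is negligible relative to $a_1$. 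For $B$, the bound $\big(g^2/t^k\big)''\ge t^{-k-2}(kg-2tg')\big((k+1)g-2tg'\big)$ established in the proof of Lemma~\ref{L3}, combined with $tg'/g<k(d-k)/d<k/2$ for $t\le1$ (by Lemma~\ref{l:monotone}), shows $B''\ge0$ on $(0,1]$; as $B'(1)<0$ this means $B$ is convex and decreasing there, so for $t$ close to $1$ the quantity $-B'(s)$ is bounded on $[t,1]$ by something close to $-B'(1)=\dfrac{k(2k-d)^3}{2d^2(d-k)}$. Feeding these into $A'(s)+B'(s)\ge a_1+B'(1)-(\text{small})$ and invoking Claim~3 of Lemma~\ref{L3} ($A'(1)+B'(1)\ge0$) finishes the proof for $d$ large.

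The main obstacle is making this last estimate quantitative. Near the endpoint $t=1$ the inequality is tight: the bound $A'(1)+B'(1)\ge0$ in Lemma~\ref{L3} relied on $3k\le 2d$ and~\eqref{eq:P2}, and now that $2k-d$ may be as large as $\tfrac13\log d$ one must carry the $O\big((2k-d)^2\big)$ and $O\big((2k-d)^3\big)$ corrections to $a_1$ and to $A'(1)+B'(1)$ and check that they are dominated by the leading term — this is where $d\ge404=\lceil e^6\rceil$ is used. The remaining work — bounding $\sum_{j\ge2}ja_j(1-s)^{j-1}$ and the variation of $B'$ on the admissible interval — is routine once the upper bound on $1-t$ coming from $f(x)\le k/(2k-d)^2$ is in hand.
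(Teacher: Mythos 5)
Your proposal takes a genuinely different route from the paper, but it has a fatal gap at the crucial estimate.

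Your overall logical structure is sound: the inequality of Lemma~\ref{no-label} indeed reverses when $f>1$ (so $(1+\eta)f > q(f)$ for $x>1$, where $q(f)=\tfrac{d}{2(d-k)}f-\tfrac{(2k-d)^2}{2d(d-k)}f^2$), and so it would suffice to show $\tfrac{(x+1)f'}{k}\le q(f)$, i.e.\ $A(t)+B(t)\le 0$ for $t=1/x<1$. But the step where you control $A'(s)$ for $s<1$ does not hold up. You bound $A'(s)\ge a_1-\sum_{j\ge2}ja_j(1-s)^{j-1}$ and assert the subtracted tail is negligible relative to $a_1$. That is false under the lemma's hypotheses. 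From Lemma~\ref{l:Claim1}, $a_j=\mu^{-1}\binom{k}{j}\binom{d-j}{k}h_j$, so $a_j$ grows roughly like $(d/4)^j/j!$ (while $a_1 = \tfrac{k(4k-d-2-(2k-d)^2)}{2d(d-1)}$ is $O(1)$). Meanwhile the constraint $f(x)\le k/(2k-d)^2$ only forces $x-1\lesssim \tfrac{d}{k^2}\bigl(\tfrac{k}{(2k-d)^2}-1\bigr)$; when $2k-d$ is small (say $2k-d=2$, $k=k_0(d)$), this allows $x-1$ up to about $1/2$ and hence $1-s=(x-1)/x$ up to about $1/3$. Plugging $1-s\approx 1/3$ into $\sum_{j\ge 2}ja_j(1-s)^{j-1}$ gives a sum whose largest terms occur around $j\approx d/12$ and are astronomically larger than $a_1$ (for $d=404$, the maximal term is of order $10^{15}$ against $a_1\approx 0.25$). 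So the lower bound on $A'(s)$ collapses, and the convexity/Taylor machinery from Lemma~\ref{L3} does not transfer to $t<1$. The same problem infects the control of $B'$, since $1-t$ is not small.

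The paper's proof sidesteps all of this. It never compares to the quadratic $q(f)$ or revisits $A(t)+B(t)$. Instead it applies the Mean Value Theorem directly to $\eta$ on $[1,x]$, using $f(\xi)\ge 1$ and the hypothesis $f(x)\le k/(2k-d)^2$ to obtain the linear lower bound $\eta(x)\ge \tfrac{2k-d}{d}-\tfrac{(x-1)f'(x)}{2d\,f(x)}$, so that it suffices to prove
\[
\frac{(x+1)f'}{k} < \frac{2k\,f}{d}-\frac{(x-1)f'}{2d}.
\]
This linear-in-$f$ target (see \eqref{eq_red}) is then established by expanding everything in powers of $(x-1)$ via \eqref{expan1} and pairing the coefficient of $(x-1)^j$ against that of $(x-1)^{k-j}$, together with a separate treatment of the $j\in\{1,k-1,k\}$ terms. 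This term-pairing argument is a genuinely global estimate and does not rely on $x$ being close to $1$, which is precisely what your Taylor-around-$t=1$ approach cannot avoid. If you want to salvage your plan, you would need a completely different way to control $A'+B'$ away from $t=1$; as written, there is no quantitative version of your argument that works for the range of $x$ the lemma actually allows.
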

\begin{proof}
	Recall that $f(1)=1$ and hence $\eta(1) = (2k-d)/d$.
	Since both $f$ and $f'$ are strictly increasing functions of $x$, 
	applying the Mean Value Theorem to $\eta$ on the interval $[1,x]$ gives
	\begin{align*}
		\frac{\eta(1)- \eta(x)}{x-1}   
		&=  \frac{d(2k-d)^2\, f'(\xi)/\sqrt{(d-k)^2 + d(2k-d)\, f(\xi)}}
		  {\left(d-k+\sqrt{(d-k)^2 + d(2k-d)\, f(\xi)}\right)^2}\\
		&\leq \frac{(2k-d)^2\, f'(x)}{2kd} \leq
		\frac{f'(x)}{2d\, f(x)}
	\end{align*}
	for some $\xi\in (1,x)$. This uses the assumed upper bound on $f(x)$ for the final inequality.
	Hence to prove the lemma, it suffices to prove that
	\begin{equation}\label{eq_red}
		\frac{(x+1)f'}{k} < \frac{2k\, f}{d}  -  \frac{(x-1) f'}{2d}.
	\end{equation}
	Similarly to the proof of Lemma~\ref{L1},  using \eqref{expan1}, we expand as follows:
	\begin{equation}\label{expr}
		\begin{aligned}
 \frac{2k\, f}{d} &- \frac{(x+1)f'}{k}  -  \frac{ (x-1) f'}{2d} 
			\\ &= \mu^{-1}\sum_{j=0}^k 
			\binom{k}{j} \binom{d-j}{k-j} \,
		 \left( \frac{2k}{d} - \frac{j}{k}  - \frac{2(k-j)^2}{k(d-j)} - \frac{ j}{2d} \right) (x-1)^j.
		\end{aligned}
	\end{equation}
	Observe that, for all $0\leq j\leq k$,
	\begin{align}
		\frac{2k}{d}  - \frac{j}{k}  - \frac{2(k-j)^2}{k(d-j)} - \frac{j}{2d} 
		&= \frac{j}{d}\left(\frac{(2k-d-j)(d-k)}{k(d-j)} + \frac12 \right).
		\label{eq:rearrange}
	\end{align}

	Summing the terms of \eqref{expr} corresponding to $j\in \{0,k-1,k\}$ together and
	using (\ref{eq:rearrange}) gives
	\begin{align}
		\sum_{j\in \{1,k-1, k\}}
		&\binom{k}{j} \binom{d-j}{k-j} \, \frac{j}{d}\left(\frac{(2k-d-j)(d-k)}{k(d-j)} + \frac{1}{2}\right) \, (x-1)^j 
		\nonumber \\	 
		&> \frac{k}{2d}  \binom{d-1}{k-1}   (x-1)
		-   \frac{(k-1)(2d-3k)}{2d} \, (x-1)^{k-1} 
		-
		\frac{2d-3k}{2d}  (x-1)^k
		\nonumber \\
		&\geq \frac{k(x-1)}{2d} \left(
		\binom{d-1}{k-1}  -   (d-k+2)(2d-3k)   \right). \label{eq:sum-3}
	\end{align}
	Under our assumptions,  we have
	\[ \binom{d-1}{k-1} - (d-k+2)(2d-3k) > \frac{(d-2)}{2}\big( (d-1)- (d-k+2)\big)\geq 0,\]
	since $\binom{d-1}{k-1} > (d-1)(d-2)/2$ and $2d-3k\leq (d-2)/2$.
	Hence the sum over $j\in \{1,k-1,k\}$ in (\ref{eq:sum-3}) is strictly positive.
	
	Now,  for $2\leq j \leq k/2$, we will sum the terms corresponding to $(x-1)^j$ and
	$(x-1)^{k-j}$ in (\ref{expr}).  First observe that when $2\leq j\leq k/2$,
	\[ \frac{2k-d-j}{d-j} > -\frac{1}{2}\]
	and hence
	\begin{align*} \frac{(2k-d-j)(d-k)}{k(d-j)} + \frac{1}{2}
		&\geq -\frac{(d-k)}{2k} + \frac{1}{2} = \frac{2k-d}{2k} > 0.
	\end{align*}
	Next,
	\[         j \,\binom{k}{j}\, \binom{d-j}{k-j} \geq  (k-j) \, \binom{k}{k-j} \,
	\binom{d-k+j}{j},\] 
	so we can bound the term corresponding to $(x-1)^j$ from below as
	\begin{align*}
		\binom{k}{j} &\binom{d-j}{k-j}\frac{j}{d}\, (x-1)^j\, \left(\frac{(2k-d-j)(d-k)}{k(d-j)} + \frac{1}{2}\right) \\ 
		&\geq \frac{(k-j)}{d}\, \binom{k}{k-j}\, \binom{d-k+j}{j}\, (x-1)^{k-j}\, 
		\, \left(\frac{(2k-d-j)(d-k)}{k(d-j)} + \frac{1}{2}\right). 
	\end{align*}
	Finally, observe that 
	\begin{align*}
		\frac{(2k-d-j)(d-k)}{k(d-j)} &+ \frac{(2k-d-(k-j))(d-k)}{k\big(d-(k-j)\big)} + 1 \\
		&\geq 
		\frac{(2k-d)(d-k)}{kd} + \frac{(k-d)(d-k)}{k(d-k)} + 1 \\
		&= 1 - \frac{2(d-k)^2}{kd}\geq 0.
	\end{align*}
	Combining these facts shows that the sum of the $(x-1)^j$ and $(x-1)^{k-j}$ terms is
	nonnegative for $2\leq j\leq k/2$, completing the proof. 
\end{proof}

To complete this section we prove the following.

\begin{lemma}
	Suppose that $d\geq 404$ and $d/2 + 1 < k \leq d/2+\frac{\log d}{6}$.  
	If $x>1$, then
	\[ 
	\frac{(x+1)\, f'(x)}{k} < \big(1 + \eta(x)\big)\, f(x).
	\]
	\label{L1-bigd}
\end{lemma}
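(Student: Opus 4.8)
Write $s=2k-d$; the hypotheses give $3\le s\le\tfrac13\log d$ and $d/2<k<2d/3$ (the upper bound because $\tfrac16\log d<\tfrac16 d$), so Lemmas~\ref{L1} and~\ref{L2} are both applicable for these values of $d,k$. A direct computation gives
\[
\frac{5k-2d}{d-k}=1+\varepsilon,\qquad \varepsilon:=\frac{6s}{d-s},
\]
using $d-k=(d-s)/2$. Since $k<4d/7$ for $d\ge404$ (equivalently $\log d<3d/7$), we have $1<\tfrac{5k-2d}{d-k}<2$. The idea is to cover $x>1$ by the two overlapping ranges $x\ge\tfrac{5k-2d}{d-k}$ and $1<x<\tfrac{5k-2d}{d-k}$.

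For $x\ge\tfrac{5k-2d}{d-k}$, Lemma~\ref{L1} yields $(x+1)f'(x)<kf(x)$, and since $\eta(x)>0$ this immediately gives $(x+1)f'(x)<k(1+\eta(x))f(x)$, which is the desired inequality. For $1<x<\tfrac{5k-2d}{d-k}$ we have $x\in(1,2)$, so by Lemma~\ref{L2} it suffices to check that $f(x)\le k/(2k-d)^2$; as $f$ is increasing on $(0,\infty)$, this reduces to the single quantitative estimate
\[
f\!\left(\frac{5k-2d}{d-k}\right)=f(1+\varepsilon)\le\frac{k}{s^2}.
\]

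To bound $f(1+\varepsilon)$ I would use the expansion~\eqref{expan1}, $f(1+\varepsilon)=\mu^{-1}\sum_{j=0}^{k}\binom{k}{j}\binom{d-j}{k-j}\varepsilon^{j}$ with $\mu=\binom dk$, together with the identity $\binom{k}{j}\binom{d-j}{k-j}/\mu=\tfrac1{j!}\prod_{i=0}^{j-1}\tfrac{(k-i)^2}{d-i}$ and the elementary bound $\tfrac{(k-i)^2}{d-i}\le\tfrac{k^2}{d}$ for $0\le i\le k-1$ (which rearranges to $i\le k(2d-k)/d$, true since $i<k<k(2-k/d)$). This gives
\[
f(1+\varepsilon)\le\sum_{j=0}^{k}\frac1{j!}\Big(\frac{k^2\varepsilon}{d}\Big)^{j}\le\exp\!\Big(\frac{k^2\varepsilon}{d}\Big).
\]
Now $\tfrac{k^2\varepsilon}{d}=\tfrac{3s}{2}\cdot\tfrac{(1+s/d)^2}{1-s/d}$, and since $s/d\le\tfrac{\log d}{3d}<\tfrac1{200}$ for $d\ge404$, this is at most $1.53\,s\le0.51\log d$, so $f(1+\varepsilon)<d^{0.51}$. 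Finally $k>d/2$ and $s^2\le\tfrac19(\log d)^2$ give $k/s^2>9d/(2(\log d)^2)$, and $d^{0.51}\le 9d/(2(\log d)^2)$ reduces to $9d^{0.49}\ge 2(\log d)^2$, which holds at $d=404$ and for all $d\ge404$ because $0.49\log d+\log(9/2)-2\log\log d$ is positive at $d=404$ and has positive derivative once $\log d>2/0.49$. This establishes $f(1+\varepsilon)<k/s^2$ and completes the proof.

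\textbf{Main obstacle.} The only nontrivial point is the quantitative bound $f(1+\varepsilon)\le k/s^2$: the shift $\varepsilon$ is only of order $(\log d)/d$, yet $f$ grows geometrically, so a crude termwise estimate would give roughly $f(1+\varepsilon)\le d^{1+o(1)}$, which is too weak against $k/s^2\asymp d/(\log d)^2$. Using the product form of the hypergeometric-coefficient ratios to pass to the sharper bound $f(1+\varepsilon)\le\exp(k^2\varepsilon/d)=d^{1/2+o(1)}$ is what creates the needed slack; the rest is a routine case split plus checking that Lemmas~\ref{L1} and~\ref{L2} jointly cover $(1,\infty)$.
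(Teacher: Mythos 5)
Your argument is correct and mirrors the paper's proof in structure: the same split into $x\ge\frac{5k-2d}{d-k}$ (covered by Lemma~\ref{L1}) and $1<x<\frac{5k-2d}{d-k}$ (covered by Lemma~\ref{L2}), the same reduction to the single quantitative target $f(1+\varepsilon)\le k/(2k-d)^2$ where $\varepsilon=\frac{5k-2d}{d-k}-1$, and the same final comparison of $d^{1/2+o(1)}$ against $k/(2k-d)^2\asymp d/(\log d)^2$ (the paper uses exponents $3/5$ where you use $0.51$; both work). The one substantive difference is how you reach the key intermediate estimate $f(x)\le\exp\!\big(k^2(x-1)/d\big)$. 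The paper reuses inequality~\eqref{eq_red}, which is an unlabelled display proved inside Lemma~\ref{L2} and which happens not to require that lemma's hypothesis $f(x)\le k/(2k-d)^2$; from it the paper derives the differential inequality $(\log f)'<2k^2/(d(x+1))$ and integrates. You instead bound the Taylor coefficients of~\eqref{expan1} directly, using the product identity
\[
\frac{\binom{k}{j}\binom{d-j}{k-j}}{\binom{d}{k}}=\frac{1}{j!}\prod_{i=0}^{j-1}\frac{(k-i)^2}{d-i}
\]
together with the termwise bound $\frac{(k-i)^2}{d-i}\le\frac{k^2}{d}$, which is valid since $i\le k-1<k(2d-k)/d$. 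Your route is slightly more self-contained, as it does not lean on an auxiliary inequality buried in another proof, while the paper's version recycles work already done for Lemma~\ref{L2}; both give the same exponential bound and either would serve.
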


\begin{proof}
	By our assumptions, $k\leq 4d/7$ and hence 
	the assumptions of Lemma~\ref{L1} and Lemma~\ref{L2} hold and
	$\frac{5k-2d}{d-k}\leq 2$.  
	By Lemma~\ref{L1} and Lemma~\ref{L2}, if
	\begin{equation}
		\label{goal}
		f(x) \leq \frac{k}{(2k-d)^2} \quad \text{ for all $1 < x \leq \frac{5k-2d}{d-k}$},
	\end{equation}
	then the lemma holds. For the remainder of the proof, suppose that
	$1 < x \leq \frac{5k-2d}{d-k}$.
	It follows from \eqref{eq_red} that  
	\[
	\frac{(x+1) f'}{k} \leq \frac{2k\, f}{d},
	\]
	and hence
	$
	(\log  f)' < \frac{2k^2}{d\, (x+1)}. 
	$
	Integration gives 
	\begin{align*}
		f(x) \leq  \left(1+ \frac{x-1}{2}\right)^{2k^2/d} &\leq 
		\exp\left(\frac{k^2(x-1)}{d}\right)\\
		&\leq 
		 \exp\left(\frac{3k^2(2k-d)}{d(d-k)}\right).
	\end{align*}
	Hence
	\[
	\log(f(x)) \leq \frac{(d + \nfrac{1}{3}\log d)^2}{2d(d-\nfrac{1}{3}\log d)}\, \log d\leq \dfrac{3}{5}\log d,
	\]
	while
	\[
	\frac{k}{(2k-d)^2} \geq 
	\frac{9d}{2(\log d)^2} = \frac{9 d^{3/5}}{50}\, \left(\frac{d^{1/5}}{\log(d^{1/5})}\right)^2 \geq \frac{9e^2}{50}\, d^{3/5}\geq d^{3/5}.
	\]
	This shows that (\ref{goal}) holds, completing the proof.
\end{proof}

As observed earlier, if $d\geq 404$ and $d/2 < k < d/2 + \frac{\log d}{6}$,
then $3k\leq 2d$.  Furthermore, (\ref{eq:P2}) holds as 
$\frac{\log d}{6} < \frac{1 + \sqrt{d-1}}{2}$ for $d\geq 404$.
Hence the proof of Lemma~\ref{lem:global-max} is completed in this
case 
by combining Lemma~\ref{lem:boundary}, Lemma~\ref{L3} and Lemma~\ref{L1-bigd}, for $k>k_0(d)$,
and using the results of Section~\ref{ss:proof-B-k0} when $k=k_0(d)$.

\subsection*{Acknowledgements}

We are very grateful to the referees for their insightful comments which have
improved this paper.
In particular, we would like to thank one referee for pointing out that using
$\alpha^{\text{1-RSB}}(d)$ an upper bound on $\alpha^*(d)$ could lead to
improvements in our results.
We also thank the mathematical research institute MATRIX in Creswick, Australia where part of this research was performed.

Michelle Delcourt's research was supported by NSERC under Discovery Grant No. 2019-04269. The research of Catherine Greenhill and Mikhail Isaev was supported by the Australian
Research Council Discovery Project DP250101611.
Bernard Lidick{\' y}'s research was supported in part by NSF grant DMS-2152490 and the Scott Hanna Professorship.  Luke Postle's research was partially supported by NSERC under Discovery Grant No. 2019-04304.

\medskip


\begin{thebibliography}{99}

\bibitem{AP}
N.~Alon and P.~Pra{\l}at, Modular orientations of random and quasi-random regular
graphs, \emph{Combinatorics, Probability and Computing} {\bf 20} (2011), 321--329.

\bibitem{BKZZ}
J.~Barbier, F.~Krzakala, L.~Zdeborov{\' a} and P.~Zhang,
The hard-core model on random graphs revisited,
\emph{Journal of Physics: Conference Series} {\bf 473} (2013), 012021.

\bibitem{BGT}
M.~Bayati, D.~Gamarnik and P.~Tetali,
Combinatorial approach to the interpolation method and scaling limits
in sparse random graphs, \emph{Annals of Probability} {\bf 41} (2013),
4080--4115.

\bibitem{BC}
E.\,A.~Bender and E.\,R.~Canfield,
The asymptotic number of labeled graphs with given degree sequences,
{\em Journal of Combinatorial Theory, Series A},
\textbf{24(3)} (1978), 296--307.
		

\bibitem{bollobas1980probabilistic}
B.~Bollob{\'a}s,
A probabilistic proof of an asymptotic formula for the number of
labelled regular graphs,
{\em European Journal of Combinatorics}, \textbf{1(4)} (1980), 311--316.

\bibitem{bollobas-iso}
B.~Bollob{\' a}s, The isoperimetric number of random regular graphs,
\emph{European Journal of Combinatorics} {\bf 9} (1988), 241--244.


\bibitem{CH}
R.\,A.~Cameron and D.~Horsley, Decompositions of complete multigraphs into
stars of varying sizes, \emph{Journal of Combinatorial Theory (Series B)} {\bf 145} (2020), 32--64.

\bibitem{CS}
Y.~Caro and J.~Sch{\" o}nheim, Decomposition of trees into isomorphic
subtrees, \emph{Ars Combinatoria} {\bf 9} (1980), 119--130.

\bibitem{DP}
M.~Delcourt and L.~Postle, Random 4-regular graphs have 3-star-decompositions
asymptotically almost surely, 
\emph{European Journal of Combinatorics} {\bf 72} (2018), 97--111.

\bibitem{DSS}
J.~Ding, A.~Sly and N.~Sun,
Maximum independent sets on random regular graph,
\emph{Acta Mathematica} {\bf 217} (2016), 263--340.

\bibitem{DZ}
W.~Duckworth and M.~Zito, 
Large independent sets in random regular graphs,
\emph{Theoretical Computer Science} {\bf 410} (2009), 5236--5243.

\bibitem{FL}
A.~Frieze and T.~{\L}uczak, On the independence and chromatic numbers of
random regular graphs, \emph{Journal of Combinatorial Series~B} {\bf 54} (1992),
123--132.

\bibitem{GJR}
C.~Greenhill, S.~Janson and A.~Ruci{\' n}ski, 
On the number of perfect matchings in random lifts, 
\emph{Combinatorics, Probability and Computing}~{\bf 19} (2010), 791 -- 817.

\bibitem{harangi}
V.~Harangi, Improved replica bounds for the independence ratio of
random regular graphs, \emph{Journal of Statistical Physics} {\bf 190} (2023),
Article number:60.

\bibitem{hoffman}
D.~Hoffman, The real truth about star designs,
\emph{Discrete Mathematics} {\bf 284} (2004), 177--180.

\bibitem{janson}
S.~Janson,
Random regular graphs: asymptotic distributions and contiguity,
{\em Combinatorics, Probability and Computing} \textbf{4} (1995), 369--405.

\bibitem{kotzig}
A.~Kotzig, From the theory of finite regular graphs of degree three and four,
\emph{Casopis P{\' e}st.\ Mat.} {\bf 82} (1957), 76--92 (in Slovak).

\bibitem{LO}
M.~Lelarge and M.~Oulamara, Replica bounds by combinatorial interpolation
for diluted spin systems, \emph{Journal of Statistical Physics} {\bf 173}
(2018), 917--940.
		
\bibitem{LTWZ}
L.\,M.~Lov{\' a}sz, C.~Thomassen, Y.~Wu and C.-Q.~Zhang,
Nowhere-zero 3-flows and modulo $k$-orientations,
\emph{Journal of Combinatorial Theory (Series B)} {\bf 103} (2013), 587--598.

\bibitem{MK}
R.~Marino and S.~Kirkpatrick, Large independent sets on random
$d$-regular graphs with fixed degree $d$,
\emph{Computation} {\bf 11} (2023), 206. 

\bibitem{meyer}
C.\,D.~Meyer, \emph{Matrix Analysis and Applied Linear Algebra},
Society for Industrial and Applied Mathematics, Philadelphia, 2000.

\bibitem{pralat-wormald}
P.~Pra{\l}at and N.~Wormald, Almost all 5-regular graphs have a 3-flow,
\emph{Journal of Graph Theory} {\bf 93} (2020), 147--156.

\bibitem{RW92}
R.\,W.~Robinson and N.\,C.~Wormald,
Almost all cubic graphs are {H}amiltonian,
{\em Random Structures \& Algorithms} \textbf{3} (1992), 117--125.

\bibitem{tarsi}
M.~Tarsi, Decomposition of complete multigraphs into stars,
\emph{Discrete Mathematics} {\bf 26} (1979), 273--278.

\bibitem{thomassen}
C.~Thomassen, The weak 3-flow conjecture and the weak circular flow
conjecture, \emph{Journal of Combinatorial Theory (Series B)} {\bf 102}
(2012), 521--529.

\bibitem{Wilf}
H.S.~Wilf, \emph{generatingfunctionology}, Academic Press, New York, 1994.

\bibitem{Wormald-d-connected}
N.\, C.~Wormald, The asymptotic connectivity of labelled regular graphs,
	\emph{Journal of Combinatorial Theory (Series B)} {\bf 31} (1981),
	156--167.

\bibitem{wormald95}
N.\,C.~Wormald, Differential equations for random processes and random graphs, \emph{Annals of Applied Probability} {\bf 5} (1995), 1217--1235.

\bibitem{Wormald-survey}
N.\,C.~Wormald, Models of random regular graphs, in \emph{Surveys in
Combinatorics, 1999} (J.\,D.~Lamb and D.\,A.~Preece, eds.),
\emph{London Mathematical Society Lecture Note Series, vol.~267},
Cambridge University Press, Cambridge, 1999.

\bibitem{YISUH}
S.~Yamamoto, H.~Ikeda, S.~Shige-eda, K.~Ushio and N.~Hamada,
On claw-decompositions of complete graphs and complete bigraphs,
\emph{Hiroshima Math.~J.} {\bf 5} (1975), 33--42.


\end{thebibliography}
\end{document}